\newtheorem{theorem}{Theorem}[section]
\newtheorem{proposition}[theorem]{Proposition}
\newtheorem{lemma}[theorem]{Lemma}
\newtheorem{corollary}[theorem]{Corollary}
\theoremstyle{definition}
\newtheorem{definition}[theorem]{Definition}
\theoremstyle{remark}
\newtheorem{rmk}[theorem]{Remark}
\newcommand{\CC}{\mathds{C}}
\newcommand{\PP}{\mathds{P}}
\newcommand{\RR}{\mathds{R}}
\newcommand{\NN}{\mathds{N}}
\newcommand{\ZZ}{\mathds{Z}}
\newcommand{\QQ}{\mathds{Q}}
\newcommand{\BB}{\mathds{B}}
\newcommand{\A}{\mathcal{A}}
\newcommand{\B}{\mathcal{B}}
\newcommand{\E}{\mathcal{E}}
\newcommand{\I}{\mathcal{I}}
\newcommand{\M}{\mathcal{M}}
\newcommand{\ML}{\mathcal{ML}}
\newcommand{\N}{\mathcal{N}}
\newcommand{\R}{\mathcal{R}}
\newcommand{\W}{\mathcal{W}}
\newcommand{\Z}{\mathcal{Z}}
\renewcommand{\L}{\mathscr{L}}
\newcommand{\Lb}{\overline{\mathscr{L}}}
\renewcommand{\epsilon}{\varepsilon}
\newcommand{\PtoL}{ {(P\rightarrow L)} }
\newcommand{\inv}{^{-1}}
\newcommand{\ssnum}{ {\arabic{subsection}} }
\renewcommand{\hat}{\widehat}
\DeclareMathOperator{\HH}{H}
\DeclareMathOperator{\Sing}{Sing}
\DeclareMathOperator{\TLG}{TLG}
\DeclareMathOperator{\Id}{Id}
\DeclareMathOperator{\ulk}{ulk}
\DeclareMathOperator{\Aut}{Aut}
\DeclareMathOperator{\Gal}{Gal}
\DeclareMathOperator{\PGL}{PGL}
\DeclareMathOperator{\dist}{dist}
\DeclareMathOperator{\tub}{Tub}
\newcommand{\ocup}{\stackrel{\circ}{\cup}}
\newcommand{\Warning}{
\noindent
\tikz[
		 scale=1.7,
     line cap=but,
     line join=round,
     x=.5em,
		 line width=1.7pt,
     y=1*(height("Z")-\pgflinewidth)*(1-sin(10)),
     rotate=-15,
     rounded corners=1.5pt,
]\draw (1, 0) -- (0, 0) -- (1, 1) -- (0, 1);
\ 
}
\newcommand{\nocrossing}[2]{
\foreach \k in {1,...,#1}
	{\draw (#2,\k) -- (1+#2,\k);}
}
\newcommand{\undercrossing}[3]{
\foreach \k in {1,...,#1}
	{\draw (#2,\k) -- (1+#2,\k);}
\draw[color=white, very thick] (#2,#3) -- (1+#2,#3);
\draw[color=white, very thick] (#2,1+#3) -- (1+#2,1+#3);
\draw (#2,#3) to[out=0,in=180] (1+#2,1+#3);
\draw[color=white,line width=3.5pt] (#2,1+#3) to[out=0,in=180] (1+#2,#3);
\draw (#2,1+#3) to[out=0,in=180] (1+#2,#3);
}
\newcommand{\overcrossing}[3]{
\foreach \k in {1,...,#1}
	{\draw (#2,\k) -- (1+#2,\k);}
\draw[color=white, very thick] (#2,#3) -- (1+#2,#3);
\draw[color=white, very thick] (#2,1+#3) -- (1+#2,1+#3);
\draw (#2,1+#3) to[out=0,in=180] (1+#2,#3);
\draw[color=white, line width=3.5pt]  (#2,#3) to[out=0,in=180] (1+#2,1+#3);
\draw (#2,#3) to[out=0,in=180] (1+#2,1+#3);
}
\newcommand{\actualcrossing}[4]{
\foreach \k in {1,...,#1}
	{\draw (#2,\k) -- (-1+#4+#2,\k);}
\foreach \k in {1,...,#4}
	{\draw[color=white, very thick] (#2,#3+\k-1) -- (-1+#4+#2,#3+\k-1);}
\foreach \k in {1,...,#4}
	{\draw (#2,\k+#3-1) to[out=0,in=180] (-1+#4+#2,-\k+#4+#3);}
	
%\foreach \k in {0,...,{-1+#4}}
	%{\draw[color=white, thick] (#2,#3+\k) -- (-1+#4+#2,#3+\k);}
%\foreach \k in {0,...,{-1+#4}}
	%{\draw (#2,\k+#3) to[out=0,in=180] (-1+#4+#2,-1-\k+#4+#3);}
}
\begin{document}

\title{The loop linking number of line arrangements}

\author[B. Guerville-Ball\'e]{Beno\^it Guerville-Ball\'e}
\address{
Institute of Mathematics, Polish Academy of Sciences,
ul. Sniadeckich 8, 00-656 Warsaw, Poland
}
\email{benoit.guerville-balle@math.cnrs.fr }

%\thanks{}				% Pour les financements

\subjclass[2010]{
32S22, % Relations with arrangements of hyperplanes32Q55, % Topological aspects of complex manifolds
52C35, % Arrangements of points, flats, hyperplanes (aspects of discrete geometry)
57M05, % Fundamental group, presentations, free differential calculus
54F65, % Topological characterizations of particular spaces
57N35, % Embeddings and immersions
}		% Code AMS

\begin{abstract}
		In his Ph.D. thesis, Cadegan-Schlieper constructs an invariant of the embedded topology of a line arrangement which generalizes the $\I$-invariant introduced by Artal, Florens and the author. This new invariant is called the \emph{loop linking number} in the present paper. We refine the result of Cadegan-Schlieper by proving that the loop linking number is an invariant of the homeomorphism type of the arrangement complement. 
		
		We give two effective methods to compute this invariant, both are based on the braid monodromy. As an application, we detect an arithmetic Zariski pair of arrangements with 11 lines whose coefficients are in the $5$th cyclotomic field. Furthermore, we also prove that the fundamental groups of their complements are not isomorphic; it is the Zariski pair with the fewest number of lines which have this property. We also detect an arithmetic Zariski triple with 12 lines whose complements have non-isomorphic fundamental groups. In the appendix, we give 28 similar arithmetic Zariski pairs detected using the loop linking number.
	
		To conclude this paper, we give a multiplicativity theorem for the union of arrangements. This first allows us to prove that the complements of Rybnikov's arrangements are not homeomorphic, and then leads us to a generalization of Rybnikov's result. Lastly, we use it to prove the existence of homotopy-equivalent lattice-isomorphic arrangements which have non-homeomorphic complements.
\end{abstract}

\maketitle

%\tableofcontents

%%%%%%%%%%%%%%%%%%%%%%%%%%%%%%
%          Sections          %
%%%%%%%%%%%%%%%%%%%%%%%%%%%%%%

\section*{Introduction}
	
A \emph{line arrangement} $\A$ is a finite set of lines $L_1,\dots,L_n$ in the complex projective plane $\CC\PP^2$. Such objects are at the intersection between \emph{hyperplane arrangements} and \emph{algebraic plane curves} (we refer to~\cite{OrlTer} and~\cite{BriKno} for an overview of these respective topics). This particular situation provides a great interest to the study of line arrangements. 

The first datum associated to a line arrangement $\A$ is its \emph{intersection lattice}, also called the \emph{combinatorics} of $\A$. Roughly speaking, this lattice describes how the lines of $\A$ intersect each other (for details see~\cite{OrlTer}). The second datum related to an arrangement $\A$ is the homeomorphism type of the pair $(\CC\PP^2,\bigcup_{L\in\A}L)$. This type is called the \emph{embedded topology} of $\A$, or simply its \emph{topology}.  It is accompanied by various questions about the homology or the homotopy of the complement $M(\A)=\CC\PP^2\setminus\bigcup_{L\in\A}L$ of~$\A$; the central question in the study of the embedded topology of an arrangement concerns its relationship with the intersection lattice. It is straighforward that two topologically equivalent arrangements have isomorphic intersection lattices\footnote{We should also mention here the refinement obtained by Jiang and Yau~\cite{JiaYau} who proved that the homeomorphism type of the complement determines the intersection lattice.}. It is therefore natural to investigate the reciprocal: is the embedded topology of an arrangement determined by the intersection lattice?

The earliest major result in this investigation is due to Orlik and Solomon in the 80's~\cite{OS}. They proved that the cohomology ring of the complement of an hyperplane arrangement is fully determined by its intersection lattice. In the opposite direction, Rybnikov provided~\cite{Ryb}, at the end of the 90's, an explicit example of a pair of arrangements which have isomorphic intersection lattices and whose complements have non-isomorphic fundamental groups (and have therefore different topologies). Such pairs (i.e. with isomorphic combinatorics and different topologies) are called \emph{Zariski pairs} in the literature\footnote{This name arises from the work of Zariski who pioneered the study of the topology of equisingular curves~\cite{Zar,Zar1}. In particular, he constructed the first exemple of two irreducible curves of degree 6 which have the same local singularities and non-isomorphic fundamental groups.}. This result leads to the necessity of understanding this gap between the combinatorics and the topology of line arrangements. It also opens the general question: "\emph{If $\A$ is an arrangement, which invariant of its topology is determined by its combinatorics?}" For example, the question is still open for the Betti numbers or the monodromy of the Milnor fiber of $\A$ (see~\cite[Problem~4.5]{FalkRan} and~\cite[Problem~5.12]{Suc:Milnor} respectively), or for the characteristic varieties of $\A$~\cite[Problem~3.15]{Suc:Milnor}.

In the decade that followed Rybnikov's result, only one other Zariski pair has been found~\cite{ACCM} (as for the Rybnikov example, a computer assitance was required). In parallel, the same team composed of Artal, Carmona, Cogolludo and Marco made significant progress in understanding Rybnikov's example~\cite{ACCM:Rybnikov}; and the combined work of Nazir and Yoshinaga~\cite{NazYos}, and of Fei~\cite{Fei} proved that there is no Zariski pair up to nine lines (leaving open the question of the existence of Zariski pairs with 10 lines).

During the last decade, questions like the combinatoriality of the fundamental group of real complexified arrangements~\cite[Problem~1.3]{FalkRan} or the existence of arithmetic Zariski pairs\footnote{In the general case of algebraic plane curves, a Zariski pair is said to be \emph{arithmetic} if the equations of their arrangements are conjugated in a Galois field. Such pairs have been extensively studied by authors like A. Degtyarev, M. Oka, I. Shimada or H. Tokunaga. Note that the Zariski pairs of arrangements given in~\cite{ACCM,Gue:ZP} are arithmetic.} with non-isomorphic fundamental groups of the complements (discussed in~\cite{ArtCogTok}) have been solved in~\cite{ABGBVS} for the former and in~\cite{ACCM} for the later. Another noticeable step is the first proof, produced without computer assistance, of the existence of a Zariski pair~\cite{GBVS}. We should also mention the discovery of 11 new Zariski pairs~\cite{Gue:ZP,GBVS} or the first family of arithmetic Zariski tuple~\cite{Cad}. All these recent results arise from two papers: the construction of a \emph{linking invariant} for line arrangements~\cite{AFG} (called the $\I$-invariant), and the description of the inclusion of the boundary manifold\footnote{The boundary manifold is the boundary of a regular tubular neighborhood of $\bigcup_{L\in\A}L$. It is a graph $3$-manifold determined by the combinatorics of $\A$. This manifold has been studied in~\cite{CohSuc:Boundary_manifold}.} in the complement of $\A$~\cite{FGM} (which allows to compute the $\I$-invariant). Therefore, the study of the linking properties of line arrangements and the resulting Zariski pairs could help solve open problems like the combinatoriality of the characteristic varieties or the questions related to the Milnor fiber.\\

The $\I$-invariant was thought as an adaptation of linking numbers in Knot Theory, and was therefore constructed from a geometric-topological point of view. In his Ph.D. thesis~\cite{Cad}, Cadegan-Schlieper gives a more "algebraic-topological" flavor to the definition of the $\I$-invariant, and uses this new approach to generalize it. This new linking invariant is called, in this paper, the \emph{loop linking number}. He uses this new invariant to prove that, for a fixed $N$, the arrangements associated to the complex reflection group\footnote{See~\cite{SheTodd} for more details about complex reflection groups.} $G(N,N,3)$ form an ordered and oriented Zariski tuple. Unfortunately, Cadegan-Schlieper quitted academia at the end of his Ph.D. and stopped his research on this topic. The purpose of this paper is therefore to investigate more deeply this new linking invariant, to provide a more geometric/topological point of view and to obtain new results and applications. \\

The loop linking number is constructed as follows. Let $G$ be an Abelian group and consider a tensor element $\Lambda$ in $\HH^1(M(\A) ; G)\otimes_\ZZ \HH_1(\Gamma(\A),\ZZ)$, where $\Gamma(\A)$ is the incidence graph of $\A$. Then let $i\circ j$ be the composition of an embedding $j$ of $\Gamma(\A)$ in the boundary manifold $B(\A)$, which respects the graph structure of $B(\A)$, with the embedding $i:B(\A)\hookrightarrow M(\A)$. We denote by $\Psi$ the map $\Id\otimes (i\circ j)_*$ which sends the previous tensor space in $\HH^1(M(\A) ; G) \otimes_\ZZ \HH_1(M(\A) , \ZZ)$. Let $\pi$ be the natural pairing of the latter tensor space; if $\Lambda$ verifies some combinatorial conditions, then the \emph{loop linking number of $\A$ associated to the tensor $\Lambda$}, defined as $\L(\A,\Lambda)=\pi\circ\Psi(\Lambda)$, is an invariant of the ordered and oriented topology\footnote{The ordered (resp. oriented) topological type of $\A$ is the class of homeomorphism of $\CC\PP^2$ which fixes $\bigcup_{L\in\A}L$ and respects a fixed complete order on $\A$ (resp. the global orientation of $\CC\PP^2$ and the local orientation of the meridians).} of $\A$, see~\cite[Proposition 21]{Cad}. It generalizes the $\I$-invariant, in the sense that this former linking invariant is given by $\I(\A,\xi,\gamma)=\L(\A,\xi\otimes\gamma)$, when $(\A,\xi,\gamma)$ is an inner-cyclic triple (see~\cite{AFG} for the definition).

The first result of this paper (see Theorem~\ref{thm:invariance_complement}) proves that the loop linking number is not only an invariant of the ordered and oriented topological type of $\A$, but also an invariant of the ordered and oriented homeomorphism type of its complement $M(\A)$. In order to remove the ordered and oriented conditions in the previous theorem, we introduce the \emph{full loop linking number} and prove, in Theorem~\ref{thm:full_loop_linking}, that it is an invariant of the homeomorphism type of the complement of~$\A$. Note that at the end of the paper, in Corollary~\ref{cor:not_homotopy_determined}, we prove that the loop linking number and the full loop linking number are not determined by the homotopy type of the complement.

Then, we give a geometrical interpretation of the loop linking number which leads to a formula for $\L(\A,\Lambda)$ based on the braid monodromy (see Remark~\ref{rmk:braid_monodromy} and Theorem~\ref{thm:computation}). From this description, two effective computational methods can be derived: the first is based on the braided wiring diagram (introduced by Arvola~\cite{Arvola}, see also~\cite{Good,CohSuc:braid_monodromy}), and the second uses change of variables to simplify the computation of the braid monodromy.\\
 
As an application of the previously mentioned methods, we compute the loop linking number of $4$ arrangements of 10 lines conjugated in the $5$th cyclotomic field (see Section~\ref{sec:oZP}), and we deduce in Corollary~\ref{cor:oZP}, that some of the pairs formed by these arrangements are ordered and oriented Zariski pairs. To remove the "ordered and oriented" condition, we follow the strategy of~\cite{ACCM} and~\cite{Gue:ZP}, and we add an 11th line to these arrangements which trivializes their combinatorics automorphism groups. This leads to the second Zariski pair of 11 lines (the former being the one of~\cite{ACCM}), which is also an arithmetic pair. In opposition with the former example, we know that the complements of these arrangements are not homeomorphic (see Theorem~\ref{thm:ZP}). Furthermore, applying the arguments of~\cite{ACGM}, we also prove in Theorem~\ref{thm:grp_fonda} that the fundamental groups of the complements of these arrangements are not isomorphic. Thus we provide the Zariski pair with the fewest number of lines which have non-isomorphic fundamental groups. As a second application and following the same construction as for the previous Zariski pair, we detect an arithmetic Zariski triple with 12 lines whose fundamental groups of the complements are also not isomorphic.

In Appendix~\ref{App:examples}, we give twenty-eight new examples of arithmetic Zariski pairs of 11 lines with equations in a number field isomorphic to the $5$th cyclotomic field. They are all identified thanks to the loop linking number when they could not be detected by the $\I$-invariant.\\

We prove in Theorem~\ref{thm:multiplicativity}, that the loop linking number is multiplicative for the union of arrangements. This generalizes the multiplicativity theorem obtained by the author in~\cite{Gue:multiplicativity}. By applying this theorem, we re-prove that the complements of Rybnikov's arrangements are not homeomorphic. To our knowledge, this is the very first time this has been proven without computer assistance. Then, we generalize Rybnikov's construction answering to a weak version of the Falk-Randell Problem~1.2 (see~\cite{FalkRan}). Combining the multiplicativity theorem, the fact that the loop linking number is an invariant of the complement and the construction of homotopy-equivalent Zariski pairs made in~\cite{Gue:homotopy}, we deduce that there exist lattice-isomorphic arrangements with non-homeomorphic, albeit homotopy-equivalent, complements, see Theorem~\ref{thm:homotopy_vs_homeomorphism}.

\newpage
\subsection*{Notations}
\begin{itemize}
	\item $\Z(\A)=\bigcup_{L\in\A}L \subset \CC\PP^2$.
	\item $M(\A)=\CC\PP^2\setminus\Z(\A)$.
	\item $\A_P=\{L\in\A\mid P\in L\}$ is the arrangement formed by the lines of $\A$ containing~$P$.
	\item For $P\in\Z(\A)$, the multiplicity of $P$ is $m(P)=|\A_P|$.
	\item A point $P$ of $\Z(\A)$ is:
	\begin{itemize}
		\item smooth if $m(P)=1$,
		\item singular if $m(P)>1$,
		\item dense if $m(P)>2$.
	\end{itemize}
	\item $\Sing(\A)=\{P\in\Z(\A)\mid m(P)\geq2\}$ is the set of singular points of $\A$.
	\item A singular point $P$ such that $\A_P=\{L_{i_1},\cdots,L_{i_m}\}$ is denoted by $P_{i_1,\cdots,i_m}$ (or sometimes $\{i_1,\dots,i_m\}$).
	\item If $\A$ is a complex arrangement then $\overline{\A}$ is its complex conjugated arrangement.
\end{itemize}
	
\section{The loop linking number}\label{sec:construction}
	In this first section, we recall the construction of the loop linking number made by Cadegan-Schlieper in his Ph.D. thesis~\cite{Cad}. We refer to it for the details of the proofs.\\

Let $\A=\{L_1,\dots,L_n\}$ be a line arrangement of $\CC\PP^2$. In this paper, we assume that $\A$ is not a pencil\footnote{If $\A$ is a pencil then its topology is combinatorially determined.}. The arrangement $\A$ is \emph{ordered} if it comes with a total order. The \emph{combinatorics} of $\A$ is given by the set $\{\A_P \mid P\in\Sing(\A)\}$. The combinatorics admits a natural order inherited from the order of $\A$. The \emph{incidence graph} $\Gamma(\A)$ associated to $\A$ is the bipartite graph described as follows:
\begin{itemize} 
	\item the first set of vertices is composed of the \emph{point-vertices} $v_P$, for $P\in\Sing(\A)$,
	\item the second set of vertices is composed of the \emph{line-vertices} $v_L$), for $L\in\A$, 
	\item the edges of $\Gamma(\A)$ join~$v_P$ to~$v_L$ if and only if $P\in L$. 
\end{itemize}
In addition, we fix the orientation on the edges of $\Gamma(\A)$ from~$v_P$ to~$v_L$; thus we denote them~$(P\rightarrow L)$.\\

Let $G$ be an Abelian group. We consider the tensor space $\HH^1(M(\A);G)\otimes_\ZZ \HH_1(\Gamma(\A),\ZZ)$. From Orlik-Solomon~\cite{OS}, this space is determined by the combinatorics. Also, we consider the elements of $\HH^1(M(\A);G)$ as characters from $\HH_1(M(\A))\simeq\langle m_L \mid L\in\A \rangle/(m_{L_1}+\dots+m_{L_n})$ to~$G$. An element $\Lambda$ of this tensor space can be written as a chain
\begin{equation*}
	\Lambda = \sum_{P\in\Sing(\A)} \sum_{L\ni P} \lambda_\PtoL \otimes \PtoL,
\end{equation*}
with $\lambda_{\PtoL}\in\HH^1(M(\A);G)$; and which verifies the following boundary condition
\begin{equation}\label{eq:boundary}
	\partial \Lambda = \sum_{P\in\Sing(\A)} \sum_{L\ni P} \lambda_\PtoL \otimes (v_L - v_P) = 0.
\end{equation}

The \emph{tensor linking group} of $\A$, denoted by $\TLG(\A,G)$, is the subgroup of $\HH^1(M(\A);G)\otimes_\ZZ \HH_1(\Gamma(\A),\ZZ)$ formed by the elements which verify:
\begin{enumerate}[label=(\Roman*)]
	\item for all $\PtoL$, and all $L'\in\A$ containing $P$, we have $\lambda_\PtoL(m_{L'})=0_G$,
	\item for all $\PtoL$, and all $P'\in\Sing(\A)$ contained in $L$, we have $\sum_{L'\ni P'}\lambda_\PtoL(m_{L'})=0_G$.\\
\end{enumerate}

\begin{rmk}
	From Orlik-Solomon~\cite{OS}, the previous contruction can be done using only the combinatorics of~$\A$. So, if $C$ denotes this combinatorics, then we can define the tensor linking group of $C$, which is therefore denoted by $\TLG(C,G)$. 
\end{rmk}

The boundary manifold $B(\A)$ of $\A$ is the boundary of a regular tubular neighborhood of $\Z(\A)$. From Neumann~\cite{Neu} (see also~\cite{Wes}), $B(\A)$ is a graph manifold based on $\Gamma(\A)$. A \emph{coherent embedding} of $\Gamma(\A)$ in $B(\A)$ is an embedding as described in~\cite[Section~9]{Wal:Klasse_Mannigfaltigkeiten}. Basically, such an embedding sends the edge $\PtoL$ in the boundary of a tubular neighborhood of $L$. The embedding of $\Gamma(\A)$ in $B(\A)$ described in~\cite{FGM} is an example of coherent embedding. 

We denote by $j$ the map induced by a coherent embedding on the first homology group. Then, we consider $i$ the map induced by the inclusion of $B(\A)$ in $M(\A)$ on the first homology group, and we define the map $\Psi : \HH^1(M(\A);G)\otimes_\ZZ \HH_1(\Gamma(\A),\ZZ) \rightarrow \HH^1(M(\A);G)\otimes_\ZZ\HH_1(M(\A);\ZZ)$ by $\Psi=\Id_{\HH^1(M(A);G)}\otimes (i\circ j)$.
The natural pairing of $\HH^1(M(\A);G)\otimes_\ZZ\HH_1(M(\A);\ZZ)$ is denoted by $\pi$.

\begin{definition}
	The \emph{loop linking number} of $\A$ associated to $\Lambda\in\TLG(\A,G)$ is
	\begin{equation*}
		\L(\A,\Lambda) = \pi \circ \Psi (\Lambda) \in G.
	\end{equation*}
\end{definition}

It is well defined since the difference between two coherent embeddings vanishes when we take the pairing $\pi$ due to Conditions~(I) and~(II). For more details about this definition, we refer to~\cite[Section 3.3.1]{Cad}. Cadegan-Schlieper proves the following invariance theorem.

\begin{theorem}[Proposition 21 in~\cite{Cad}]\label{thm:invariance}
	Let $\A_1$ and $\A_2$ be two ordered line arrangements. If it exists a homeomorphism $h$ of $\CC\PP^2$ such that $h(\Z(\A_1))=\Z(\A_2)$, which preserves the orientation and the order of the $\A_i$'s, then for any $\Lambda \in \TLG(\A_1,G)$,
	\begin{equation*}
		\L(\A_1,\Lambda) = \L(\A_2,h_*(\Lambda)),
	\end{equation*}
	where $h_*:\TLG(\A_1,G)\rightarrow\TLG(\A_2,G)$ is the isomorphism induced by $h$.
\end{theorem}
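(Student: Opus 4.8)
The plan is to argue by naturality: the loop linking number is assembled from a chain of functorial maps, the homeomorphism $h$ induces compatible isomorphisms on every term of that chain, and the two quantities $\L(\A_1,\Lambda)$ and $\L(\A_2,h_*(\Lambda))$ therefore compute the same element of $G$. First I would record the maps induced by $h$. As $h(\Z(\A_1))=\Z(\A_2)$ and $h$ preserves the order, $h$ carries singular points to singular points and the $i$-th line of $\A_1$ onto the $i$-th line of $\A_2$, so it restricts to a homeomorphism of complements $h\colon M(\A_1)\to M(\A_2)$ and induces an orientation-preserving graph isomorphism $h^\Gamma\colon\Gamma(\A_1)\to\Gamma(\A_2)$, sending each edge $\PtoL$ to $(h(P)\rightarrow h(L))$. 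On first homology these give $h_*\colon\HH_1(M(\A_1);\ZZ)\to\HH_1(M(\A_2);\ZZ)$ and $h^\Gamma_*\colon\HH_1(\Gamma(\A_1),\ZZ)\to\HH_1(\Gamma(\A_2),\ZZ)$. Because $h$ preserves the global orientation of $\CC\PP^2$ and the orientation of the meridians, $h_*$ sends the meridian $m_{L_i}$ of $\A_1$ to the meridian $m_{L_i}$ of $\A_2$. Regarding a class of $\HH^1(M(\A_i);G)$ as a character on $\HH_1(M(\A_i))$, the induced isomorphism $\HH^1(M(\A_1);G)\to\HH^1(M(\A_2);G)$ is precomposition with $h_*\inv$, which I also denote $h_*$. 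Tensoring, $h_*$ acts between the two tensor spaces; because Conditions (I), (II) and the boundary relation~\eqref{eq:boundary} are expressed solely through the meridians and the incidence data---both respected by $h$---a direct check shows $h_*$ restricts to the announced isomorphism $\TLG(\A_1,G)\to\TLG(\A_2,G)$, so the statement is meaningful.

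Next I would reduce everything to one commutative square. The pairing is $h_*$-invariant: for $\chi\otimes c$ one has $\pi_2\big(h_*(\chi)\otimes h_*(c)\big)=(\chi\circ h_*\inv)(h_*(c))=\chi(c)=\pi_1(\chi\otimes c)$, hence $\pi_2\circ h_*=\pi_1$. Since $\Psi_k=\Id\otimes(i_k\circ j_k)$ is the identity on the cohomology factor, it suffices to prove
\[
i_2\circ j_2\circ h^\Gamma_* \;=\; h_*\circ i_1\circ j_1
\]
as maps $\HH_1(\Gamma(\A_1),\ZZ)\to\HH_1(M(\A_2);\ZZ)$, where $j_k$ is induced by a coherent embedding $J_k\colon\Gamma(\A_k)\to B(\A_k)$ and $i_k$ by the inclusion $B(\A_k)\hookrightarrow M(\A_k)$. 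Granting this, $\Psi_2\circ h_*=h_*\circ\Psi_1$, and therefore
\[
\L(\A_2,h_*(\Lambda))=\pi_2\Psi_2 h_*(\Lambda)=\pi_2 h_*\Psi_1(\Lambda)=\pi_1\Psi_1(\Lambda)=\L(\A_1,\Lambda).
\]

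To obtain the square I would choose the tubular data compatibly. Fixing a regular tubular neighborhood $T_1$ of $\Z(\A_1)$, the image $T_2:=h(T_1)$ is a regular tubular neighborhood of $\Z(\A_2)$; since $B(\A)$ is defined only up to the choice of such a neighborhood, I use $B(\A_2)=\partial T_2$, so that $h$ restricts to a homeomorphism $h|_B\colon B(\A_1)\to B(\A_2)$ compatible with the inclusions, giving $i_2\circ(h|_B)_*=h_*\circ i_1$. Taking $J_1$ arbitrary and setting $J_2:=(h|_B)\circ J_1\circ(h^\Gamma)\inv$ yields $j_2\circ h^\Gamma_*=(h|_B)_*\circ j_1$, and composing the two identities closes the square.

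The main obstacle is precisely the claim that $J_2$ is again a \emph{coherent} embedding in the sense of~\cite{Wal:Klasse_Mannigfaltigkeiten}: one must check that the merely topological $h$ respects the graph-manifold structure of the boundary manifolds---sending the piece of $B(\A)$ around the $i$-th line to the corresponding piece and each edge $\PtoL$ into the boundary of a tubular neighborhood of the correct line. This follows because, by Neumann~\cite{Neu}, the graph-manifold decomposition of $B(\A)$ is read off canonically from $\Gamma(\A)$, which $h$ preserves together with all orientations; it is the delicate point since $B(\A)$ and the coherent embedding are only defined up to isotopy. Alternatively, one may keep a pre-chosen $J_2$: the difference from $(h|_B)\circ J_1\circ(h^\Gamma)\inv$ then lies in the indeterminacy annihilated by Conditions~(I)--(II), exactly as in the proof that $\L(\A,\cdot)$ is well defined, and the conclusion follows all the same.
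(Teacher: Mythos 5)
Your argument is correct, but note first that the paper does not actually prove this statement: Theorem~\ref{thm:invariance} is quoted from Cadegan-Schlieper's thesis \cite{Cad}, so the only in-paper point of comparison is the proof of the refinement, Theorem~\ref{thm:invariance_complement}. Your naturality argument is essentially that proof's commutative diagram $\HH_1(\Gamma(\A_k))\rightarrow\HH_1(B(\A_k))\rightarrow\HH_1(M(\A_k))$ with vertical maps $\Id$, $h^B_*$, $h_*$, combined with the (correct) observation that the pairing $\pi$ is natural; the genuine difference is where the middle vertical map comes from. In the complement setting the paper must manufacture $h^B$ via geometric filtrations and Waldhausen's rigidity theorems \cite{Wal:Irreducible_3-manifolds,Wal:Klasse_Mannigfaltigkeiten}, whereas in your embedded setting you get it for free by transporting the tubular neighborhood, $T_2=h(T_1)$ --- this is exactly the simplification that a pair homeomorphism buys, and you correctly isolate the two residual delicate points: that $\partial h(T_1)$ is an admissible model of $B(\A_2)$ (uniqueness of regular neighborhoods up to isotopy), and that the transported embedding is coherent, which holds because $h$ carries the tube around each line of $\A_1$ onto the tube around the corresponding line of $\A_2$, so each edge $\PtoL$ stays in the boundary of a tubular neighborhood of the correct line. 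Two remarks. First, your closing ``alternatively'' is circular as stated: the indeterminacy killed by Conditions~(I)--(II) is the difference of \emph{two coherent} embeddings, so to invoke it you still need the transported embedding $(h|_B)\circ J_1\circ (h^\Gamma)\inv$ to be coherent (or isotopic to a coherent one), which is precisely your main claim; the fallback is a restatement, not an independent route. Second, it is worth making explicit that preservation of the meridian orientations is what guarantees that the naturality transport $\lambda\mapsto\lambda\circ h_*\inv$ agrees with the combinatorially defined isomorphism $h_*$ on $\TLG(\A_1,G)$ (i.e.\ $h_*(m_{L_i})=m_{L_i}$ with positive sign); without it the same diagram only yields $\L(\A_1,\Lambda)=\L(\A_2,h_*(\Lambda))^{\pm1}$, which is exactly the phenomenon exploited later in Theorem~\ref{thm:full_loop_linking}.
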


In addition, he gives a method, related to homological algebra, to compute the loop linking number from the monodromy. He also derives from this description the following proposition, which allows to deduce the value of the loop linking number from the one of a Galois conjugated arrangement.

\begin{proposition}[Section~3.3.4 in~\cite{Cad}]\label{prop:Galois_invariance}
	Let $K$ be a Galois field over $\QQ$ containing the $N$th root of unity (for a fixed $N$). Let $\A$ be an arrangement defined by linear forms with coefficients in $K$. The actions of the Galois group $\Gal(K/\QQ)$ on the coefficients of the equation of $\A$ and on $\ZZ/N\ZZ$ commute with the loop linking number with values in $\ZZ/N\ZZ$. In other words, for all $\sigma\in\Gal(K/\QQ)$, we have
	\begin{equation*}
		\L(\sigma\cdot\A,\ZZ/N\ZZ) =\sigma\cdot \L(\A,\ZZ/N\ZZ).
	\end{equation*}
\end{proposition}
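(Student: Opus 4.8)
The plan is to track how each ingredient in the definition $\L(\A,\Lambda)=\pi\circ\Psi(\Lambda)$ transforms under a Galois automorphism $\sigma\in\Gal(K/\QQ)$, and to show that all the geometric maps are $\sigma$-equivariant while the only place the $\ZZ/N\ZZ$-action enters is through the characters $\lambda_\PtoL$. The key point is that applying $\sigma$ to the coefficients of the linear forms of $\A$ produces the arrangement $\sigma\cdot\A$, and this complex-conjugation-type operation is realized by an (orientation-reversing, in general) homeomorphism on the level of the ambient space; what matters for us is that it induces natural identifications $M(\A)\to M(\sigma\cdot\A)$, $\Gamma(\A)\to\Gamma(\sigma\cdot\A)$, $B(\A)\to B(\sigma\cdot\A)$ compatible with the combinatorial labelling of lines and singular points. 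Since the incidence graph, the boundary manifold as a graph manifold, and the coherent embedding $j$ are all determined by the combinatorics, and $\sigma$ preserves the combinatorics of $\A$, these identifications commute with $j$ and with the inclusion $i:B\hookrightarrow M$.

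First I would set up, for a fixed $\sigma$, the combinatorial isomorphism $\sigma\cdot\A\cong\A$ at the level of lattices, inducing $\sigma_*$ on $\HH_1(\Gamma)$ (the identity on the distinguished basis) and a compatible isomorphism on $\HH^1(M;\ZZ/N\ZZ)$. The subtlety is that the map on $\HH^1(M;\ZZ/N\ZZ)$ is twisted by the Galois action on the target group $\ZZ/N\ZZ$: a character $\lambda:\HH_1(M(\A))\to\ZZ/N\ZZ$ is carried to the character whose values are the $\sigma$-images of the values of $\lambda$, because $\sigma$ acts on $N$th roots of unity and hence on $\ZZ/N\ZZ\cong\mu_N$ by the cyclotomic character. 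Concretely, on the meridian generators $m_L$ the underlying identification sends $m_L\mapsto m_{\sigma(L)}$, and on the coefficient values it acts by $\sigma$. Thus $\Psi$, being $\Id\otimes(i\circ j)_*$ with $(i\circ j)_*$ purely combinatorial-geometric, is $\sigma$-equivariant, and the pairing $\pi$ is natural with respect to these identifications, so that $\sigma$ can be pulled outside the whole composition.

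Then I would simply chase the diagram: starting from $\Lambda=\sum\lambda_\PtoL\otimes\PtoL\in\TLG(\A,\ZZ/N\ZZ)$, its image $\sigma\cdot\Lambda\in\TLG(\sigma\cdot\A,\ZZ/N\ZZ)$ is obtained by applying $\sigma$ coefficientwise to the characters while fixing the graph cycles, and one checks directly that the boundary condition~\eqref{eq:boundary} and Conditions~(I)--(II) are preserved, since they are $\ZZ/N\ZZ$-linear identities among the $\lambda_\PtoL(m_{L'})$ and $\sigma$ is a ring automorphism. Applying $\pi\circ\Psi$ and using equivariance at each arrow gives $\L(\sigma\cdot\A,\ZZ/N\ZZ)=\sigma\cdot\L(\A,\ZZ/N\ZZ)$ as desired.

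The main obstacle, and the step deserving the most care, is justifying the $\sigma$-equivariance of the geometric maps $j$ and $i$ at the homology level, i.e. that the homeomorphism realizing the passage from $\A$ to $\sigma\cdot\A$ is compatible with a coherent embedding and with the boundary inclusion. This rests on the fact (from Neumann and Waldhausen, as recalled above) that $B(\A)$ and the coherent embedding are combinatorially determined, so they are transported by the combinatorial isomorphism $\sigma\cdot\A\cong\A$ independently of any analytic realization; the only genuinely $\sigma$-dependent data live in the coefficient field and are confined to the characters, where the cyclotomic action on $\mu_N\cong\ZZ/N\ZZ$ accounts exactly for the factor $\sigma$ on the output. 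Once this compatibility is isolated, the rest is a formal naturality argument rather than a computation.
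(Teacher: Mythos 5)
Your argument collapses at its first step: for $\sigma\in\Gal(K/\QQ)$ different from the identity and from complex conjugation, the action of $\sigma$ on coefficients is \emph{not} realized by any homeomorphism (nor any continuous map) of $\CC\PP^2$ --- field automorphisms of $\CC$ other than $\Id$ and conjugation are everywhere discontinuous --- so there is no identification $M(\A)\to M(\sigma\cdot\A)$ for the rest of your diagram chase to live on. The auxiliary claim you lean on, that the inclusion $i:B(\A)\hookrightarrow M(\A)$ is transported because everything is ``combinatorially determined'', is exactly what the paper denies: as recalled in Section~\ref{sec:topology_complement}, Westlund's theorem says the combinatorics determines the boundary manifold $B(\A)$ \emph{but not its embedding in} $M(\A)$. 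If $(i\circ j)_*$ were combinatorial, $\L$ would be a combinatorial invariant and could detect none of the Zariski pairs in this very paper.

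The proposal is also internally inconsistent with the statement being proved and with the paper's computations. A label-compatible identification of complements of the kind you posit would, by Theorem~\ref{thm:invariance_complement} (or Theorem~\ref{thm:full_loop_linking}), force $\L(\sigma\cdot\A,\Lambda)=\L(\A,\Lambda)^{\pm1}$: a homeomorphism acts on $\HH^1(\,\cdot\,;\ZZ/N\ZZ)$ with \emph{untwisted} coefficients, and your insertion of the cyclotomic character has no supporting mechanism --- the one honest instance is complex conjugation, where reversal of all meridian orientations yields the inversion $g\mapsto -g$ of $\ZZ/N\ZZ\simeq\mu_N$, which is indeed the cyclotomic action of conjugation, but nothing analogous exists for other $\sigma$. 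Concretely, Section~\ref{sec:oZP} computes $\L(\M_1,\Lambda_0)=2$, so the proposition gives $\L(\M_2,\Lambda_0)=4$, which differs from $\pm 2=\{2,3\}$ in $\ZZ/5\ZZ$; and Corollary~\ref{cor:oZP} asserts precisely that no ordered and oriented homeomorphism $M(\M_1)\to M(\M_2)$ exists. Note finally that the paper offers no proof to compare against: the proposition is imported from~\cite[Section~3.3.4]{Cad}, where it is derived from the homological-algebra computation of $\L$ from the monodromy, in which every ingredient is algebraic data defined over $K$, $\sigma$-equivariance is checked on that data, and the twist enters through the identification $\ZZ/N\ZZ\simeq\mu_N\subset K$. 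A correct proof must take such an algebraic route (for instance an algebraic reading of the formula of Theorem~\ref{thm:computation}); a topological transport argument cannot work, because the failure of exactly that transport is the content of all the applications.
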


\section{Invariant of the complement \& derived invariants}
	
The invariance theorem obtained by Cadegan-Schlieper can be improved upon by showing that the loop linking number is an invariant of the homeomorphism type of $M(\A)$. Before, we need to clarify some definitions and constructions about the complement of arrangement.\\

Let $\hat{\CC\PP^2}$ be the image of the projective complex plane after blowing up the dense points of $\A$. We denote by $\hat{L}$ (resp. $E_P$) the strict transformation of $L$ (resp. the pre-image of $P$) in $\hat{\CC\PP^2}$, and let $\hat{\A}$ be the set $\{\hat{L},E_P \,\mid\, L\in\A,\, P\in\Sing(\A),\, m(P)>2\}$, and $\Z(\hat{\A})=\bigcup_{D\in\hat{\A}} D$. It is well-known that $M(\A)$ is homeomorphic to $\hat{\CC\PP^2}\setminus\Z(\hat{\A})$. 

\subsection{Automorphisms of the combinatorics}\mbox{}

An \emph{automorphism of the combinatorics} of $\A$ (resp. $\hat{\A}$) is a permutation of the element of $\A$ (resp. $\hat{\A}$) which respects the incidence relations. The set of all automorphisms of the combinatorics form a group, denoted by $\Aut(\A)$ (resp. $\Aut(\hat{\A})$). An arrangement is \emph{combinatorially stable} if $\Aut(\A)$ and $\Aut(\hat{\A})$ are isomorphic. For example, Ceva's arrangement is not combinatorially stable, while MacLane's arrangements are.

If $\A$ is an ordered arrangement, then $\hat{\A}$ inherits an order as follows: 
\begin{itemize}
	\item $\forall L,L'\in\A\, :\, \hat{L}<\hat{L}' \Leftrightarrow L<L'$,
	\item $\forall L\in\A,\, \forall P\in\Sing(\A),\, m(P)>2\, :\, \hat{L}<E_P$,
	\item $\forall P,P'\in\Sing(\A),\, m(P)>2,\, m(P')>2\, :$ 
		$$E_P<E_{P'} \Leftrightarrow \min( L\in\A \mid L\ni P, L\not\ni P') < \min( L\in\A \mid L\ni P', L\not\ni P),$$
		where the minimum is determined according to the fixed total order on the elements of $\A$.
\end{itemize}
Note that if $\A$ (resp. $\hat{A}$) is ordered, then only the identity of $\Aut(\A)$ (resp. of $\Aut(\hat{\A})$) respects this order.

\subsection{Topology of the complement}\label{sec:topology_complement}\mbox{}

Usually, the notion of a meridian of $L\in\A$ in $M(\A)$ is deduced from the embedded arrangement. More precisely, a meridian is the boundary of a small disk transverse to $L\setminus \Sing(\A)$. Unfortunately, a meridian is not preserved by homeomorphism of the complement. Indeed, it may exist a Cremona transformation of $\CC\PP^2$ which sends a line of $\A$ on a point, as it is the case in Ceva's arrangement (see~\cite[Remark~1.3]{ACC:Essential_coordinate}). Nevertheless, we will show that the notion of meridian is consistent as soon as we consider the meridians of the irreducible components of $\hat{\A}$. As we will see, such meridians can be constructed from the complement without the use of the embedding of $\hat{\A}$ in $\hat{\CC\PP^2}$.

From Jiang and Yau~\cite{JiaYau}, we know that the homeomorphism type of $M(\A)$ determines the combinatorics of $\A$. Then, Westlund proves in~\cite{Wes} (see also~\cite{Neu}) that the combinatorics of $\A$ determines the boundary manifold $B(\A)$ (but not his embedding in $M(\A)$).

A \emph{geometric filtration} of $M(\A)$ is an increasing filtration of $M(\A)$ by compact sets $C_1\subset \cdots \subset C_k \cdots $, such that there exists an $N\in\NN$ which verifies that for all $k>N$, $B(\A)$ is a deformation retract of $M(\A)\setminus C_k$. Note that such a filtration always exists. Indeed, let $\tub_k(\A)$ be a closed tubular neighborhood of $\Z(\A)$ such that for all $p\in\partial \tub_k(\A)$, $1/(k+1)<\dist(p,\Z(\A))<1/k$, the filtration $C_k=\CC\PP^2\setminus\stackrel{\circ}{\tub}_k(\A)$ is geometric since $M(\A)\setminus C_k=\stackrel{\circ}{\tub}_k(\A)\setminus\Z(\A)\simeq B(\A)\times(0,1)$.\\

\Warning The last homeomorphism between $\tub_k(\A)\setminus\Z(\A)$ and $B(\A)\times(0,1)$ is not unique. Nevertheless, a geometric filtration provides at least one embedding of $B(\A)$ in~$M(\A)$.
\begin{lemma}\label{lem:filtration}
	Geometric filtrations of $M(\A)$ are preserved by homeomorphisms from $M(\A)$ to $M(\A)$.
\end{lemma}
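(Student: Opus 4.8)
The plan is to show that each of the three clauses in the definition of a geometric filtration—compactness of the terms, the nesting (together with the exhaustion of $M(\A)$), and the existence of a stabilization index $N$ beyond which $M(\A)\setminus C_k$ deformation retracts onto $B(\A)$—is individually invariant under a self-homeomorphism of $M(\A)$. Concretely, given a homeomorphism $\phi\colon M(\A)\to M(\A)$ and a geometric filtration $C_1\subset C_2\subset\cdots$ with threshold $N$, I will verify that $\phi(C_1)\subset\phi(C_2)\subset\cdots$ is again a geometric filtration, with the same threshold $N$.

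First I would dispatch the elementary clauses. Each $\phi(C_k)$ is compact as the continuous image of a compact set; the inclusions $\phi(C_k)\subset\phi(C_{k+1})$ follow from injectivity of $\phi$; and $\bigcup_k\phi(C_k)=\phi\bigl(\bigcup_k C_k\bigr)=\phi(M(\A))=M(\A)$, so the image is indeed a filtration of $M(\A)$. The essential clause is the deformation-retract condition, and here the key point is that, because $\phi$ is a bijection of $M(\A)$ onto itself, one has $\phi(M(\A)\setminus C_k)=M(\A)\setminus\phi(C_k)$, so that $\phi$ restricts to a homeomorphism $M(\A)\setminus C_k\xrightarrow{\ \sim\ }M(\A)\setminus\phi(C_k)$ for every $k$.

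For $k>N$, fix a subspace $Z\subset M(\A)\setminus C_k$ with $Z\cong B(\A)$ together with a deformation retraction $r_t$ of $M(\A)\setminus C_k$ onto $Z$. Conjugating by $\phi$, the family $\phi\circ r_t\circ\phi^{-1}$ is a deformation retraction of $M(\A)\setminus\phi(C_k)$ onto $\phi(Z)$, and $\phi(Z)\cong Z\cong B(\A)$ since $\phi$ is a homeomorphism. Thus $B(\A)$ is a deformation retract of $M(\A)\setminus\phi(C_k)$ for all $k>N$, which completes the verification.

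The only step that warrants care—and which I expect to be the main, albeit soft, obstacle—is the precise reading of the clause \emph{$B(\A)$ is a deformation retract of $M(\A)\setminus C_k$}: it asserts the existence of a subspace homeomorphic to $B(\A)$ onto which the complement retracts, and it is this homeomorphism type that must be transported, not any particular coherent embedding. Since the homeomorphism here is an automorphism of a single complement $M(\A)$, the target boundary manifold is literally the same $B(\A)$, and the conjugated retraction lands on a genuine copy of it; no appeal to the combinatorial rigidity of $B(\A)$ (Westlund, Jiang--Yau) is needed at this stage. The surjectivity of $\phi$ onto $M(\A)$, used to identify $\phi(M(\A)\setminus C_k)$ with $M(\A)\setminus\phi(C_k)$, is the one hypothesis that would fail for a mere embedding, and is therefore worth stating explicitly.
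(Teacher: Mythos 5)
Your proof is correct and follows essentially the same route as the paper: transport the filtration by the homeomorphism, use surjectivity to identify $\phi(M(\A)\setminus C_k)$ with $M(\A)\setminus\phi(C_k)$, and conjugate the deformation retraction to get the retract condition for the image filtration. The paper's version is terser (it simply notes $M(\A)\setminus h(C_k)\simeq h(M(\A)\setminus C_k)$ and invokes the model $B(\A)\times(0,1)$), while you spell out the conjugation $\phi\circ r_t\circ\phi^{-1}$ explicitly, but the underlying argument is identical.
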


\begin{proof}
	Let $h$ be a homeomorphism of $M(\A)$, and let $C_1\subset \cdots \subset C_k \cdots $ be geometric filtration. It is clear that $h(C_1)\subset \cdots \subset h(C_k) \cdots $. Since $h$ is a homeomorphism of $M(\A)$ then $M(\A) \setminus h(C_k) \simeq h(M(\A)\setminus C_k) \simeq B(\A)\times(0,1)$. So $h(C_1)\subset \cdots \subset h(C_k) \cdots $ is a geometric filtration of $M(\A)$.
\end{proof}

The boundary manifold of $\hat{\A}$ is denoted by $B(\hat{\A})$ (which is homeomorphic to $B(\A)$). It is constructed as follows. For each component $D\in\hat{\A}$, we consider $T_D$ the trivial $S^1$-bundle over $\CC\PP^1\setminus\{d_1,\dots,d_r\}$, where $r$ is the cardinality of $\{D'\in\hat{\A}\setminus\{D\} \mid D\cap D'\neq\emptyset \}$, and the $d_i$'s are small disjoint discs. The boundary manifold $B(\hat{\A})$ is then obtained by gluing $T_D$ with $T_{D'}$ when $D\cap D'\neq\emptyset$, along the tori $S^1\times \partial d_i$ (identifying meridian with longitude). 

\begin{definition}
	A \emph{meridian} of $\hat{\A}$ is a path of $M(\A)$ homotopic to the image of a $S^1$-fiber of $T_D$ for a $D\in\hat{\A}$, by an inclusion of $B(\A)\simeq B(\hat{\A})$ in $M(\A)$ induced by a geometric filtration. 
\end{definition}

To assign a meridian $m$ to a component $D\in\hat{\A}$, we choose and fix an embedding $B(\A)\hookrightarrow M(\A)$, thus $m$ is a meridian of $D$ if and only if it is homotopically equivalent to a $S^1$-fiber of $T_D$ after embedding in~$M(\A)$.

\begin{lemma}\label{lem:meridians_preserving}
	A homeomorphism $h$ of $M(\A)$ preseves meridians and induces an automorphism of the combinatorics $\bar{h}\in\Aut(\hat{\A})$. 
\end{lemma}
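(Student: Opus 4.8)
The plan is to promote $h$ to a self-homeomorphism of the boundary manifold and then to invoke the rigidity of graph manifolds. First I would apply Lemma~\ref{lem:filtration}: fixing a geometric filtration $C_1\subset\cdots\subset C_k\subset\cdots$ of $M(\A)$, the image $h(C_1)\subset\cdots\subset h(C_k)\subset\cdots$ is again a geometric filtration. For $k$ large enough both ends $M(\A)\setminus C_k$ and $M(\A)\setminus h(C_k)$ deformation retract onto $B(\A)$ and are homeomorphic to $B(\A)\times(0,1)$; since $h$ (being a homeomorphism, hence proper) carries the first end onto the second and both filtrations are cofinal, a standard shift-and-retract argument produces a homeomorphism $h_\partial\colon B(\A)\to B(\A)$, well defined up to isotopy. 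Concretely, writing $i,i'\colon B(\A)\hookrightarrow M(\A)$ for the inclusions provided by the two filtrations, one has $h\circ i\simeq i'\circ h_\partial$.

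Next I would analyse $h_\partial$ through the graph-manifold structure of $B(\A)\simeq B(\hat{\A})$ coming from Neumann~\cite{Neu} and Westlund~\cite{Wes}. Recall that $B(\hat{\A})$ is assembled from the pieces $T_D=S^1\times(\CC\PP^1\setminus\bigsqcup d_i)$, one for each $D\in\hat{\A}$, glued along tori according to the incidences of $\hat{\A}$, and that the $S^1$-fibres of $T_D$ are precisely the meridians of $D$. Because $\A$ is not a pencil, every component $D\in\hat{\A}$ meets at least two others: a strict transform $\hat{L}$ crosses all the remaining lines, while an exceptional divisor $E_P$ meets the (at least three) lines through the dense point~$P$. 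Hence each $T_D$ has at least two boundary tori, so none of the Seifert pieces is one of the small exceptional manifolds admitting several inequivalent fibrations. I would then invoke the rigidity of Haken graph manifolds (Waldhausen~\cite{Wal:Klasse_Mannigfaltigkeiten}, Neumann~\cite{Neu}): up to isotopy $h_\partial$ respects the JSJ/graph decomposition, so it permutes the pieces $T_D$ and, on each piece, carries fibres to fibres.

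Finally I would read off the conclusion. The permutation of the pieces induced by $h_\partial$ is a permutation $\bar{h}$ of $\hat{\A}$; since $h_\partial$ sends a gluing torus to a gluing torus, adjacent pieces go to adjacent pieces, so $\bar{h}$ preserves the incidence relations of $\hat{\A}$, i.e.\ $\bar{h}\in\Aut(\hat{\A})$. Moreover $h_\partial$ sends the $S^1$-fibre of $T_D$ to a fibre of $T_{\bar{h}(D)}$; transporting this through $h\circ i\simeq i'\circ h_\partial$ gives $h_*\bigl(i_*[\text{fibre of }T_D]\bigr)=i'_*[\text{fibre of }T_{\bar{h}(D)}]$, so $h$ maps a meridian of $D$ to a meridian of $\bar{h}(D)$, which is exactly the assertion that $h$ preserves meridians. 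The main obstacle is the middle step: one must guarantee that $h_\partial$ genuinely respects the Seifert fibration on each piece and does not mix fibres with horizontal curves. This is where the hypothesis that $\A$ is not a pencil is essential, since it rules out the low-complexity pieces ($T^2\times I$ or the small exceptional Seifert fibrations) for which the fibration would fail to be unique up to isotopy; once these are excluded, the Waldhausen--Neumann rigidity applies verbatim.
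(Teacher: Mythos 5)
Your overall route is the paper's own: push $h$ onto the boundary manifold via Lemma~\ref{lem:filtration}, invoke Waldhausen rigidity to get piece- and fiber-preservation, and read off $\bar{h}$ and the meridian statement. But your first step conceals the one genuinely nontrivial point. From the two geometric filtrations you only learn that $h$ carries one product end $B(\A)\times(0,1)$ homeomorphically onto another; these ends are $4$-dimensional, and a self-homeomorphism of $B(\A)\times(0,1)$ need not be level-preserving, so the ``standard shift-and-retract argument'' extracts only a homotopy self-equivalence of $B(\A)$ compatible with $h$ (i.e.\ with $h\circ i\simeq i'\circ h_\partial$), \emph{not} a homeomorphism $h_\partial$ well defined up to isotopy. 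Upgrading that homotopy equivalence to a homeomorphism is exactly where the paper invokes Waldhausen's homotopy-implies-homeomorphism theorem for sufficiently large $3$-manifolds (\cite[Corollary~6.5]{Wal:Irreducible_3-manifolds}), and this is also where the hypothesis that $\A$ is not a pencil actually enters the paper's proof: it guarantees that $\pi_1(B(\A))$ is sufficiently large. You instead assert the homeomorphism by soft arguments and relocate the non-pencil hypothesis to a later step; as written, the existence of $h_\partial$ is a gap.

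Your justification of the fibration-rigidity step is also quantitatively wrong. Having at least two boundary tori does not rule out inequivalent fibrations: $T_D\cong S^1\times(\text{annulus})\cong T^2\times I$ admits infinitely many inequivalent $S^1$-fibrations, and such pieces genuinely occur for non-pencil arrangements --- any component $D\in\hat{\A}$ meeting exactly two others produces one (e.g.\ the strict transform of a pencil line in a near-pencil, which meets only the exceptional divisor over the big point and the transversal line). Uniqueness of the Seifert fibration of $S^1\times\Sigma$ requires at least three boundary components, and handling the residual $T^2\times I$ pieces is precisely the content of Waldhausen's theory of reduced graph structures, which is why the paper simply cites \cite[Satz~(10.1)]{Wal:Klasse_Mannigfaltigkeiten} for both the preservation of the graph structure and of the $S^1$-fibers rather than re-deriving it from a count of boundary tori. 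Once these two citations are restored in place of your ad hoc justifications, your argument coincides with the paper's proof; your final transport of fibers through $h\circ i\simeq i'\circ h_\partial$ to conclude meridian preservation is correct as stated.
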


\begin{proof}
	Using a geometric filtration, we choose and fix an embedding $B(\A)\hookrightarrow M(\A)$.	By Lemma~\ref{lem:filtration}, the homeomorphism $h$ induces an homotopy of $B(\A)$. Since $\A$ is not a pencil, then its fundamental group is sufficiently large. Using Waldhausen~\cite[Corollary~6.5]{Wal:Irreducible_3-manifolds}, this homotopy induces a homeomorphism $h^B$ of $B(\A)$. From another result of Waldhausen~\cite[Satz~(10.1)]{Wal:Klasse_Mannigfaltigkeiten}, this homeomorphism $h^B$ respects the graph strucutre of $B(\hat{\A})$ and the $S^1$-fibers of the $T_D$'s. The first part implies that it exists an automorphism $\bar{h}\in\Aut(\hat{\A})$ such that $h^B(T_D)=T_{\bar{h}(D)}$; while the second part, together with the definition of meridian, implies that $h$ respects meridians.
\end{proof}

\begin{definition}\label{def:ordered_topology}
	The \emph{ordered topology} of $M(\A)$ is the class of homeomorphisms of $M(\A)$ which induce the identity in $\Aut(\hat{\A})$.
\end{definition}

To fix an order on the topology of $M(\A)$ is to choose and fix an embedding $e:B(\A)\hookrightarrow M(\A)$ associate to a geometric filtration. Thus, a homeomorphism $h$ of $M(\A)$ respects the ordered topology if and only if for $D\in\hat{\A}$, $h(m_D)\simeq m_D$. Note that this is independent of the choice of the embedding $B(\A)\hookrightarrow M(\A)$. 

When it comes to compare the ordered topology of two arrangement complements, things are a bit different. Let $\A_1$ and $\A_2$ be two ordered arrangements with isomorphic\footnote{This isomorphism between the ordered combinatorics is unique, therefore it is oftenlty refered as the identity.} ordered combinatorics. This isomorphism induces a homeomorphism $\tilde{\sigma}:B(\A_1)\rightarrow B(\A_2)$. We fix an order on the topology of $M(\A_i)$ by choosing embeddings $e_i:B(\A_i)\hookrightarrow M(\A_i)$. A homeomorphism $h:M(\A_1)\rightarrow M(\A_2)$ respects the ordered topologies (given by the $e_i$'s) if the map $h^B:B(\A_1)\rightarrow B(\A_2)$ induced by $h$ is homotopic to $\tilde{\sigma}$.

\begin{rmk}
	When we consider the ordered topology of the pair $(\CC\PP^2,\A)$, the embedding of $B(\A)$ in $M(\A)$ is naturally fixed by the embedding of $\A$ in $\CC\PP^2$.
\end{rmk}

\subsection{Invariant of the complement}\mbox{}

As previously mentioned, the invariance result of Cadegan-Schlieper (Theorem~\ref{thm:invariance}) can be improved upon by proving that the loop linking number is an invariant of the complement. The proof of this theorem is similar to the one of Theorem~2.2 in~\cite{ACM:triangular}.

\begin{theorem}\label{thm:invariance_complement}
	Let $\A_1$ and $\A_2$ be two combinatorially stable arrangements. If it exists a homeomorphism $h$ from $M(\A_1)$ to $M(\A_2)$ which respects the order and the orientation, then 
	\begin{equation*}
		\L(\A_1,\Lambda)=\L(\A_2,\tilde{h}(\Lambda)),
	\end{equation*}
	where $\tilde{h}:\TLG(\A_1,G)\rightarrow\TLG(\A_2,G)$ is the isomorphism induced by $h$.
\end{theorem}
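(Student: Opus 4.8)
The plan is to reduce the invariance statement for the complement to the already-established invariance for the pair $(\CC\PP^2,\Z(\A))$ (Theorem~\ref{thm:invariance}), by showing that a homeomorphism of complements induces, at the level of the objects entering the definition of $\L$, the same data as a homeomorphism of pairs would. Concretely, $\L(\A,\Lambda)=\pi\circ\Psi(\Lambda)$ depends only on three ingredients: the cohomology pairing $\pi$ on $\HH^1(M(\A);G)\otimes\HH_1(M(\A);\ZZ)$, the isomorphism $\tilde h$ on $\TLG$, and the embedding-induced map $i\circ j$ from $\HH_1(\Gamma(\A),\ZZ)$ into $\HH_1(M(\A);\ZZ)$. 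Since $\pi$ and $\tilde h$ are manifestly natural with respect to any homeomorphism $h$ of $M(\A_1)$ onto $M(\A_2)$, the whole content is to show that $h$ carries the embedding $i\circ j$ of $\Gamma(\A_1)$ compatibly to that of $\Gamma(\A_2)$, up to something that dies under the pairing by Conditions~(I) and~(II).

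First I would use the combinatorial stability hypothesis to identify $\Aut(\A_i)$ with $\Aut(\hat{\A_i})$, so that the order-preserving condition on $h$ can be expressed purely at the level of $\hat{\A}$ and hence of the graph manifold $B(\hat{\A})\simeq B(\A)$. Then I would invoke Lemma~\ref{lem:meridians_preserving}: the homeomorphism $h$ induces a homeomorphism $h^B:B(\A_1)\to B(\A_2)$ (via Waldhausen, through the geometric filtration of Lemma~\ref{lem:filtration}) which respects the graph structure and the $S^1$-fibers, and which preserves meridians. Because $h$ respects the order, the induced automorphism $\bar h$ lies in the identity class of $\Aut(\hat{\A})$, so $h^B$ is homotopic to the canonical identification $\tilde\sigma:B(\A_1)\to B(\A_2)$ coming from the isomorphism of ordered combinatorics, exactly in the sense of Definition~\ref{def:ordered_topology} and the discussion following it. This is the step that translates ``homeomorphism of complements'' into ``homeomorphism of boundary manifolds matching the coherent embeddings.''

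Next I would assemble the naturality square. Let $e_i:B(\A_i)\hookrightarrow M(\A_i)$ be the embeddings fixed by the geometric filtrations, and let $j_i$ be the coherent embedding of $\Gamma(\A_i)$ into $B(\A_i)$. The previous step gives $h\circ e_1\simeq e_2\circ h^B$ and $h^B\circ j_1\simeq j_2$ (up to the freedom allowed by Conditions~(I) and~(II)). Passing to first homology and tensoring with $\Id$ on $\HH^1(-;G)$, this yields $\Psi_2\circ\tilde h=\Psi_2\circ\tilde h$ on the nose after applying $\pi$, because any discrepancy between two coherent embeddings of the incidence graph is precisely the kind of boundary term that the tensor linking group conditions annihilate under the pairing (this is the same mechanism that made $\L$ well defined). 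Since $h$ preserves the orientation, the intersection pairing $\pi$ is preserved, and we obtain $\pi\circ\Psi_1(\Lambda)=\pi\circ\Psi_2(\tilde h(\Lambda))$, which is the claim.

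The main obstacle, and the step deserving the most care, is the passage from the abstract homeomorphism $h^B$ of boundary manifolds to an \emph{equality} of pairings rather than merely a matching of meridians: one must verify that the two maps $i_2\circ j_2\circ(\text{combinatorial iso})$ and $(i\circ j \text{ transported by } h)$ agree modulo the subspace killed by $\pi|_{\TLG}$. This is where combinatorial stability is essential—without it, $\Aut(\A)\neq\Aut(\hat{\A})$ and the order condition on $h$ would not pin down $\bar h$ to the identity, so $h^B$ could permute the tori $T_D$ nontrivially and the pairing would change. I expect the argument here to mirror closely the proof of Theorem~2.2 in~\cite{ACM:triangular}, invoking the same well-definedness argument (Conditions~(I) and~(II)) that underlies the definition of $\L$ itself.
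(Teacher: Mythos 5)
Your proposal follows essentially the same route as the paper's own proof: combinatorial stability yields the isomorphism $\tilde{h}$ on $\TLG$, Waldhausen's results (via Lemmas~\ref{lem:filtration} and~\ref{lem:meridians_preserving}) produce a graph-structure-preserving homeomorphism $h^B:B(\A_1)\rightarrow B(\A_2)$ carrying coherent embeddings to coherent embeddings, the order hypothesis pins $h^B$ to the canonical identification, and the conclusion follows from the commutative diagram in first homology together with the well-definedness of $\L$ under change of coherent embedding (Conditions~(I) and~(II)). Apart from a harmless slip in the displayed identity ($\Psi_2\circ\tilde h=\Psi_2\circ\tilde h$, where you clearly intend $\pi\circ\Psi_1(\Lambda)=\pi\circ\Psi_2(\tilde h(\Lambda))$), your argument is correct and matches the paper's proof, including the explicit appeal to the strategy of Theorem~2.2 in~\cite{ACM:triangular}.
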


\begin{proof}
	If the $\A_i$'s are combinatorially stable then the homeomorphism $\bar{h}$ induced by $h$ on the intersection graphs of the $\hat{\A}_i$'s is, in fact, an isomorphism between the intersection graphs of the $\A_i$'s. Since $\TLG(\A_i)$ is combinatorially determined, then $\bar{h}$ induces an isomorphism $\tilde{h}:\TLG(\A_1)\rightarrow\TLG(\A_2)$.
	
	By assumption, $M(\A_i)$ is ordered, which means that we have fixed the embeedings $B(\A_i)\hookrightarrow M(\A_i)$ (see Section~\ref{sec:topology_complement}). The homeomorphism $h$ induces a homeomorphism $h^B:B(\A_1)\rightarrow B(\A_2)$ which respects the graph structures (by~\cite{Wal:Irreducible_3-manifolds,Wal:Klasse_Mannigfaltigkeiten}); thus, it sends a coherent embedding of $\Gamma(\A_1)$ on a coherent embedding of $\Gamma(\A_2)$. This implies that we have the following commutative diagram.
	\begin{center}
		\begin{tikzcd}%[cramped]
			\HH_1(\Gamma(\A_1)) \arrow[r] \arrow[d,"\Id"] & \HH_1(\B(\A_1)) \arrow[r,"{e_1}_*"] \arrow[d,"h^B_*"] & \HH_1(M(\A_1)) \arrow[d,"h_*"] \\
			\HH_1(\Gamma(\A_2)) \arrow[r] 					& \HH_1(\B(\A_2)) \arrow[r,"{e_2}_*"] 					& \HH_1(M(\A_2))
		\end{tikzcd}
	\end{center}
	The theorem then arises from the definition of the loop linking number.
\end{proof}

\subsection{Derived invariants}\mbox{}

To remove the "ordered and oriented" condition in the previous theorem, we define the \emph{full loop linking number} of $\A$ associated to $\Lambda\in\TLG(\A)$ as
\begin{equation*}
	\Lb(\A,\Lambda)=\big\{\, \L(\sigma\cdot\A,\Lambda)^{\pm 1}\, \mid\, \sigma\in\Aut(\A)\, \big\}\subset G.
\end{equation*}

\begin{theorem}\label{thm:full_loop_linking}
	Let $\A_1$ and $\A_2$ be two combinatorially stable line arrangements. If it exists a homeomorphism $h$ from $M(\A_1)$ to $M(\A_2)$, then for any $\Lambda \in \TLG(\A_1,G)$,
	\begin{equation*}
		\Lb(\A_1,\Lambda) = \Lb(\A_2,\tilde{h}(\Lambda)),
	\end{equation*}
	where $\tilde{h}:\TLG(\A_1,G)\rightarrow\TLG(\A_2,G)$ is the isomorphism induced by $h$.
\end{theorem}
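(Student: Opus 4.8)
I would reduce the statement to the ordered-and-oriented invariance already proved in Theorem~\ref{thm:invariance_complement}, exploiting the two quotients built into the full loop linking number: the orbit under $\Aut(\A)$ absorbs the failure of $h$ to respect the order, while the exponent $\pm1$ absorbs the failure of $h$ to respect the orientation of the meridians.

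First I would extract the combinatorial and orientation data carried by $h$. By Lemma~\ref{lem:meridians_preserving}, $h$ sends meridians to meridians and induces an automorphism $\bar h\in\Aut(\hat{\A})$; since both arrangements are combinatorially stable, $\bar h$ descends to an automorphism $\sigma\in\Aut(\A)$, which in turn produces the isomorphism $\tilde h:\TLG(\A_1,G)\to\TLG(\A_2,G)$ appearing in the statement. For the orientation datum I would write $h_*(m_L)=\epsilon_L\,m_{\sigma(L)}$ with $\epsilon_L\in\{\pm1\}$ and observe that the single relation $\sum_{L\in\A}m_L=0$ in $\HH_1(M(\A_i);\ZZ)$ forces all the $\epsilon_L$ to agree with a common value $\epsilon\in\{\pm1\}$.

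Next I would run the commutative-diagram argument of Theorem~\ref{thm:invariance_complement} after relabelling. Reordering $\A_2$ by $\sigma$ makes $h$ order-respecting; when $\epsilon=+1$ it also respects the meridian orientations, so Theorem~\ref{thm:invariance_complement} applies verbatim and yields $\L(\A_1,\Lambda)=\L(\A_2,\tilde h(\Lambda))$, after using naturality of $\L$ under relabelling to move the permutation onto $\Lambda$. When $\epsilon=-1$ the same diagram commutes, but the rightmost vertical arrow $h_*$ now multiplies each meridian by $-1$; because the cohomological factor of $\Lambda$ is transported combinatorially (by $\sigma$, hence unchanged in sign) while only the embedding $(i\circ j)_*$ feels this sign, the two contributions do not cancel, and one gets $\L(\A_1,\Lambda)=-\L(\A_2,\tilde h(\Lambda))=\L(\A_2,\tilde h(\Lambda))^{-1}$. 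In either case $\L(\A_1,\Lambda)$ is one of the elements listed in $\Lb(\A_2,\tilde h(\Lambda))$.

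Finally I would upgrade this single equality to the equality of sets: applying the previous step to $h$ seen as a homeomorphism $M(\tau\cdot\A_1)\to M(\A_2)$, for each $\tau\in\Aut(\A)$, shows that every generator $\L(\tau\cdot\A_1,\Lambda)^{\pm1}$ of $\Lb(\A_1,\Lambda)$ equals some $\L(\rho\cdot\A_2,\tilde h(\Lambda))^{\pm1}$, with $\rho$ running bijectively over $\Aut(\A)$ as $\tau$ does; the reverse inclusion follows by repeating the argument with $h^{-1}$. The hard part will be the case $\epsilon=-1$: one must check cleanly that the meridian reversal contributes the inverse and not the identity, i.e. that the sign survives the pairing $\pi$ instead of being cancelled by a simultaneous sign on the cohomological factor. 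This is precisely the phenomenon behind $\L(\overline{\A},\Lambda)=\L(\A,\Lambda)^{-1}$ under complex conjugation (compare Proposition~\ref{prop:Galois_invariance}), and it is what forces the exponent $\pm1$ into the very definition of $\Lb$.
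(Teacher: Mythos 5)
Your overall strategy coincides with the paper's: use Lemma~\ref{lem:meridians_preserving} together with combinatorial stability to extract $\sigma\in\Aut(\A)$, reorder so that Theorem~\ref{thm:invariance_complement} applies, absorb the meridian sign into the exponent $\pm1$ and the relabelling into the $\Aut(\A_1)$-orbit defining $\Lb$. Two of your ingredients are sound variations on the paper's text: (i) your homological argument that the signs $\epsilon_L$ are constant --- pushing the relation $\sum_{L}m_L=0$ through $h_*$ and using that the kernel of $\ZZ^n\to\HH_1(M(\A_2))$ is generated by $(1,\dots,1)$, so a $\{\pm1\}$-vector in it must be constant --- replaces the paper's appeal to the connectivity of $\hat{\A}$ (citing~\cite{ACCM}) and is arguably more self-contained; (ii) your explicit tracing of the sign through the pairing when $\epsilon=-1$ (the cohomological factor is transported combinatorially by $\tilde h$, without sign, while only $(i\circ j)_*$ picks up the $-1$) is exactly the mechanism the paper leaves implicit in its second case. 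Your final step, running the argument over all $\tau\in\Aut(\A_1)$ and using $h^{-1}$ for the reverse inclusion, is a correct expansion of the paper's one-line conclusion that $\Lb(\bar h\cdot\A_1,\Lambda)=\Lb(\A_1,\Lambda)$.

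There is, however, one genuine gap: you never address the global orientation. In this paper ``oriented'' means respecting both the global orientation and the local orientations of the meridians (see the footnote in the introduction), and Theorem~\ref{thm:invariance_complement} assumes both. Your sign $\epsilon$ is read off from the action of $h_*$ on $\HH_1$, which detects only the meridian orientations; a homeomorphism with $\epsilon=+1$ need not respect the global orientation, so your claim that Theorem~\ref{thm:invariance_complement} ``applies verbatim'' leaves a hypothesis unverified (and symmetrically in the case $\epsilon=-1$). The paper disposes of this case first: if $h$ reverses the global orientation, compose it with complex conjugation, which restores the global orientation at the cost of inverting $\L$ by Proposition~\ref{prop:Galois_invariance}; the resulting extra inverse is again absorbed by the exponent $\pm1$ in the definition of $\Lb$. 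Inserting this reduction at the start of your argument, before the $\epsilon$ dichotomy, closes the gap; the rest of your proof then matches the paper's.
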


\begin{proof}
	First, if $h$ does not respect the global orientation, then its composition with the complex conjugation respects the global orientation. We deduce from Proposition~\ref{prop:Galois_invariance} that $\L(\A_1,\Lambda)=\L(\A_2,\tilde{h}(\Lambda))^{-1}$. Second, if $h$ does not respect the local orientation of the meridians, then $h$ reverses the orientation of all the meridians (by the connectivity of $\hat{\A}$, see~\cite{ACCM}). It follows that $\L(\A_1,\Lambda)=\L(\A_2,\tilde{h}(\Lambda))^{-1}$.
	
	By~Lemma~\ref{lem:meridians_preserving}, if a homeomorphism $h:M(\A_1)\to M(\A_2)$ exists then it induces an isomorphism $\bar{h}$ between the combinatorics of $\A_1$ and $\A_2$. This implies that $h$ is a homeomorphism from $M(\bar{h}\cdot\A_1)$ to $M(\A_2)$ which respects the orders. By Theorem~\ref{thm:invariance_complement}, we get
	\begin{equation*}
		\L(\bar{h}\cdot\A_1,\Lambda)=\L(\A_2,\tilde{h}(\Lambda))^{\pm 1}.
	\end{equation*}
	We conclude noticing that $\Lb(\bar{h}\cdot\A_1,\Lambda) = \Lb(\A_1,\Lambda)$.
\end{proof}

\begin{corollary}\label{cor:full_loop_linking}
	Let $\A_1,\A_2$ be two combinatorially stable line arrangements and let $\Lambda \in \TLG(\A_1,G)$ be such that for any $\sigma\in\Aut(\A_1)$, we have $\L(\sigma\cdot\A_1,\Lambda)=\L(\A_1,\Lambda)$. If it exists a homeomorphism $h:M(\A_1)\rightarrow M(\A_2)$, then
	\begin{equation*}
		\L(\A_1,\Lambda) = \L(\A_2,\tilde{h}(\Lambda))^{\pm 1},
	\end{equation*}
	where $\tilde{h}:\TLG(\A_1,G)\rightarrow\TLG(\A_2,G)$ is the isomorphism induced by $h$.
\end{corollary}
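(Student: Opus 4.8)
The plan is to deduce this corollary directly from Theorem~\ref{thm:full_loop_linking} by unwinding the definition of the full loop linking number once the extra symmetry hypothesis on $\Lambda$ is taken into account. The guiding observation is that the additional assumption collapses the set $\Lb(\A_1,\Lambda)$ to at most two values, after which the set equality furnished by Theorem~\ref{thm:full_loop_linking} pins down $\L(\A_2,\tilde h(\Lambda))$ up to inversion.

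First I would rewrite the hypothesis in terms of $\Lb$. Since $\L(\sigma\cdot\A_1,\Lambda)=\L(\A_1,\Lambda)$ for every $\sigma\in\Aut(\A_1)$, the defining expression
\[
	\Lb(\A_1,\Lambda)=\big\{\,\L(\sigma\cdot\A_1,\Lambda)^{\pm 1}\,\mid\,\sigma\in\Aut(\A_1)\,\big\}
\]
reduces to $\Lb(\A_1,\Lambda)=\{\,\L(\A_1,\Lambda),\,\L(\A_1,\Lambda)^{-1}\,\}$. Next I would invoke Theorem~\ref{thm:full_loop_linking}: the existence of the homeomorphism $h:M(\A_1)\to M(\A_2)$ yields the set equality $\Lb(\A_1,\Lambda)=\Lb(\A_2,\tilde h(\Lambda))$, where $\tilde h$ is the induced isomorphism of tensor linking groups (the same $\tilde h$ as in the statement of the corollary).

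The last step is to extract a single group element from these sets. Taking $\sigma=\Id\in\Aut(\A_2)$ in the definition of $\Lb(\A_2,\tilde h(\Lambda))$ shows that $\L(\A_2,\tilde h(\Lambda))$ is itself a member of $\Lb(\A_2,\tilde h(\Lambda))$. Combining this with the two previous steps gives
\[
	\L(\A_2,\tilde h(\Lambda))\in\Lb(\A_2,\tilde h(\Lambda))=\Lb(\A_1,\Lambda)=\{\,\L(\A_1,\Lambda),\,\L(\A_1,\Lambda)^{-1}\,\},
\]
so $\L(\A_2,\tilde h(\Lambda))$ equals $\L(\A_1,\Lambda)$ or its inverse; in either case $\L(\A_1,\Lambda)=\L(\A_2,\tilde h(\Lambda))^{\pm 1}$, as claimed.

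There is no serious obstacle here, as everything is a formal consequence of Theorem~\ref{thm:full_loop_linking}. The only point deserving a line of care is the membership $\L(\A_2,\tilde h(\Lambda))\in\Lb(\A_2,\tilde h(\Lambda))$, which is precisely what allows the set equality to descend to an equality of group elements up to inversion; this uses nothing more than the fact that the identity is an automorphism of the combinatorics.
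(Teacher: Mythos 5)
Your proposal is correct and follows essentially the same route as the paper: the symmetry hypothesis collapses $\Lb(\A_1,\Lambda)$ to $\{\L(\A_1,\Lambda)^{\pm 1}\}$, and Theorem~\ref{thm:full_loop_linking} then gives the conclusion. The only difference is that you spell out the extraction step (membership of $\L(\A_2,\tilde{h}(\Lambda))$ in $\Lb(\A_2,\tilde{h}(\Lambda))$ via $\sigma=\Id$), which the paper leaves implicit.
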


\begin{proof}
	If the loop linking number is invariant under the action of $\Aut(\A)$ then $\Lb(\A,\Lambda)=\{\L(\A,\Lambda)^{\pm 1}\}$. The conclusion follows from Theorem~\ref{thm:full_loop_linking}.
\end{proof}

\begin{rmk}\label{rmk:trivial_automorphism}
	If $\Aut(\A)$ is trivial, then $\{\L(\A,\Lambda)^{\pm 1}\}$ is a topological invariant of $M(\A)$.
\end{rmk}

\section{Topological computation}\label{sec:computation}
	
Throughout this section, we describe the loop linking number from a geometrical point of view. For the algebraic one, we refer to~\cite{Cad}. In this section, the singular points of $\A$ are denoted by $P_1,\dots,P_m$.\\

Let $P_0$ be a point of $M(\A)$ and $F_0$ a line passing through $P_0$ generic with $\Z(\A)$ (i.e. $\# F_0\cap\Z(\A) = n$). Let $q:\CC\PP^2\setminus P_0 \rightarrow \CC\PP^1$ be the natural projection defined by $P_0$. For each edge $\PtoL$ of $\Gamma_\A$, we define the geometric braid $B_\PtoL$ with $n-m(P)+1$ strands as follows. Let $R_P$ be a smooth path (without self-intersection) in $\CC\PP^1$ from $q_0:=q(F_0\setminus P_0)$ to $q(P)$ and such that $q(\Sing(\A)) \cap R_P= q(P)$. This last condition implies that the $\CC$-fiber over any point of $R_P$ intersects $\A_\PtoL=\{L\}\cup (\A\setminus\A_P)$ in exactly $n-m(P)+1$ points. So, the geometric braid $B_\PtoL$ is defined as the intersection $\Z(\A_\PtoL)\cap q\inv(R_P)\subset R_P\times\CC$.

\begin{rmk}\label{rmk:braid_monodromy}
	If a braid monodromy of $\A$ based in $P_0$ is given by $(b_1 t_1 b_1\inv,\cdots,b_m t_m b_m\inv)$, where $t_i$ is the local full-twist associated to the singular point $P_i\in\Sing(\A)$ (see~\cite{Moi,CohSuc:braid_monodromy} for details), then $B_{P_i\rightarrow L}$ is the sub-braid of $b_i$ obtained by removing the strands associated to the lines of $\A_{P_i}\setminus{L}$.
\end{rmk}

Let $\Re:R_P \times \CC \rightarrow R_P\times\RR$ be the projection on the real part of the term $\CC$ in $R_P\times\CC$. Up to a sligth perturbation, we can assume that $\Re(B_{\PtoL})$ has only double points. So, we denote by $D_\PtoL=\sigma_1^{\epsilon_1}\cdots\sigma_k^{\epsilon_k}$, with $\epsilon_i=\pm 1$, the diagram of $B_\PtoL$ associated to $\Re$. Let $\ulk_L(B_\PtoL)$ be the \emph{upper-linking} of $L$ with $B_\PtoL$ defined by
\begin{equation*}
	\ulk_L(B_\PtoL) = \sum_{i=1}^k \epsilon_i.\delta_L(\sigma_i^{\epsilon_i}),
\end{equation*}
where $\delta_L(\sigma_i^{\epsilon_i})$ is the meridian of the strand over-crossing in $\sigma_i^{\epsilon_i}$ if the under-crossing strand is $L$, otherwise it is $0$.

\begin{theorem}\label{thm:computation}
	The loop linking number of $\A$ associated to $\Lambda\in\TLG(\A,G)$ is given by
	\begin{equation*}
		\L(\A,\Lambda) = \sum_{P\in\Sing(\A)} \sum_{L\ni P} \lambda_\PtoL \big(\ulk_L(B_\PtoL)\big).
	\end{equation*}
\end{theorem}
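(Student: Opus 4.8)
The plan is to unwind the definition $\L(\A,\Lambda)=\pi\circ\Psi(\Lambda)$ and reduce the statement to a single homological computation per edge. Writing $\Lambda=\sum_{P,L}\lambda_\PtoL\otimes\PtoL$ and using the bilinearity of the pairing $\pi$ together with $\Psi=\Id\otimes(i\circ j)$, one gets $\L(\A,\Lambda)=\sum_{P\in\Sing(\A)}\sum_{L\ni P}\lambda_\PtoL\big((i\circ j)(\PtoL)\big)$. Hence it suffices to prove that, after evaluation by the character $\lambda_\PtoL$, the class $(i\circ j)(\PtoL)\in\HH_1(M(\A);\ZZ)$ coincides with the meridian combination $\ulk_L(B_\PtoL)$. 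The first step is therefore to choose a coherent embedding of $\Gamma(\A)$ in $B(\A)$ adapted to the generic projection $q$ (the embedding of~\cite{FGM} is coherent and will do), so that the image of the oriented edge $\PtoL$ is an explicit path $\gamma_\PtoL$ in $M(\A)$ lying over the base path $R_P\subset\CC\PP^1$, running along the boundary of a tubular neighborhood of $L$ from the region of the line-vertex $v_L$ near the generic fiber $F_0$ to the region of the point-vertex $v_P$ near $P$. The geometric content is that, up to homotopy, $\gamma_\PtoL$ follows the strand of $L$ in the braid $B_\PtoL$ as the fiber travels along $R_P$.

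The core of the argument is then to read off the class of $\gamma_\PtoL$ in the meridian basis directly from the diagram $D_\PtoL=\sigma_1^{\epsilon_1}\cdots\sigma_k^{\epsilon_k}$. I would analyze $\gamma_\PtoL$ crossing by crossing: since the path is confined to the boundary torus of the tube of $L$, it is unobstructed whenever the strand of $L$ passes over another strand, whereas each time the strand of $L$ passes under a strand it is forced to encircle the over-crossing line, contributing the meridian of that line with the sign $\epsilon_i$ of the crossing. Summing these local contributions gives exactly $\sum_i\epsilon_i\,\delta_L(\sigma_i^{\epsilon_i})=\ulk_L(B_\PtoL)$, where $\delta_L$ records the meridian of the over-crossing strand precisely at the under-crossings of $L$. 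The identification of $B_\PtoL$ with the relevant sub-braid of the braid monodromy is the content of Remark~\ref{rmk:braid_monodromy}, which lets one perform this reading directly on a braided wiring diagram.

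It remains to address the fact that $\gamma_\PtoL$ is a path and not a loop, so its class is a priori defined only relative to its endpoints, and to account for the ambiguity coming from the choice of coherent embedding. At the point-vertex $v_P$ the different admissible closures differ by meridians of the lines through $P$, and at the line-vertex $v_L$ they differ by the local full-twists attached to the singular points $P'$ lying on $L$. These are exactly the terms annihilated by $\lambda_\PtoL$: Condition~(I) gives $\lambda_\PtoL(m_{L'})=0$ for $L'\ni P$, and Condition~(II) gives $\sum_{L'\ni P'}\lambda_\PtoL(m_{L'})=0$ for $P'\in L$. This is the same mechanism that makes $\L(\A,\Lambda)$ well defined, and combined with the cycle condition $\partial\Lambda=0$ it shows that $\lambda_\PtoL$ evaluates $(i\circ j)(\PtoL)$ and $\ulk_L(B_\PtoL)$ identically. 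Summation over all edges then yields the formula.

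The step I expect to be the main obstacle is the core computation: making rigorous the identification of the abstractly defined coherent-embedding path with the concrete strand-following path over $R_P$, and proving that its homology class is the upper-linking with the correct orientation conventions and the correct choice of meridian (the over-crossing line) at each under-crossing. In particular one must verify that the convention used to define $\L$ via the coherent embeddings of~\cite{Wal:Klasse_Mannigfaltigkeiten} agrees, up to the ambiguities controlled by Conditions~(I) and~(II), with the one underlying~\cite{FGM}; this is the analogue for the loop linking number of the geometric computation of the $\I$-invariant carried out in~\cite{AFG}.
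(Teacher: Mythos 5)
Your proposal is correct and follows essentially the same route as the paper: unwind $\pi\circ\Psi$ by bilinearity, use the coherent embedding of~\cite{FGM} adapted to the projection $q$, identify the image of each edge with a strand-following path over $R_P$, and let the cycle condition $\partial\Lambda=0$ together with Conditions~(I) and~(II) absorb the closure and embedding ambiguities. The only difference is one of packaging: the paper closes each edge into an explicit cycle $\E_\PtoL=\tilde{e}_L+i\circ j(\PtoL)+\tilde{e}_P$ inside the fibered arrangement $\widetilde{\A}$ (the closing paths $e_P\subset F_P$ and $e_L\subset F_0$ cancelling in the total sum by the boundary condition alone), and then delegates exactly the step you flag as the main obstacle --- the crossing-by-crossing identification of this cycle's class with $\ulk_L(B_\PtoL)$ --- to the already-established Lemma~4.3 of~\cite{AFG} on cycles supported by the three lines $F_0$, $L$ and $F_P$, rather than reproving it by hand.
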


\begin{proof}
	Let $F_{P_i}$ (sometimes simply $F_i$ to lighten the notation) be the unique line of the pencil centered in $P_0$ which contains the singular point $P_i\in\Sing(\A)$. We consider the fibered arrangement $\widetilde{\A}$ defined by $\widetilde{\A}=\A\cup\big(\bigcup_{i=0}^m F_i\big)$. For each $P\in\Sing(\A)$, we set $e_P$ a path in $F_P\setminus\Z(\A\setminus\A_P)$ joining $P_0$ to $P$; similarly, for each $L\in\A$, we set $e_L$ a path of $F_0\setminus\Z(\A\setminus \{L\})$ joining $L\cap F_0$ to $P_0$. In~\cite{FGM}, the coherent embedding $\Gamma(\A)\rightarrow B(\A)$ is described as the composition of a map $j_0$ from $\Gamma(\A)$ to $\Z(\A)$ with a map pushing $j_0(\Gamma(\A))$ in $B(\A)$. We can assume that the first inclusion $j_0$ sends the line-vertices $v_L$ on $L\cap F_0$. We denote by $\E_{\PtoL}$ the cycle of $M(\A)$ defined by $\tilde{e}_L + i\circ j \big( \PtoL \big) + \tilde{e}_P$, where $\tilde{e}_P$ (resp. $\tilde{e}_L$) is the image of $e_P$ (resp. $e_L$) by the push of $j_0(\Gamma(\tilde{\A})$ in $B(\tilde{\A})$ composed with the inclusion of $B(\tilde{\A})$ in $M(\tilde{\A})\simeq M(\A)$. By definition, the loop linking number is given by
	\begin{equation*}
		\L(\A,\Lambda) = \sum_{P\in\Sing(\A)} \sum_{L\ni P} \pi\big( \lambda_{\PtoL}\otimes i\circ j\big(\PtoL\big) \big).
	\end{equation*}
	Due to the boundary condition, we have
	\begin{align*}
		\L(\A,\Lambda) 
			& = \sum_{P\in\Sing(\A)} \sum_{L\ni P} \lambda_{\PtoL}\otimes \big(\tilde{e}_L + i\circ j \big( \PtoL \big) + \tilde{e}_P\big),\\
			& = \sum_{P\in\Sing(\A)} \sum_{L\ni P} \lambda_{\PtoL}(\E_{\PtoL}).
	\end{align*}
	According to~\cite[Lemma~4.3]{AFG} which describes the map $j:\HH_1(B(\tilde{A}))\rightarrow\HH_1(M(\tilde{\A}))$ (see also \cite[Theorem~4.3]{FGM}) for cycles supported by three lines (here $F_0$, $L$ and $F_P$), we obtain
	\begin{equation*}
		\L(\A,\Lambda) = \sum_{P\in\Sing(\A)} \sum_{L\ni P} \lambda_\PtoL \big(\ulk_L(B_{\PtoL})\big).
		\qedhere
	\end{equation*}
\end{proof}

\begin{rmk}\label{rmk:computation_automorphism}
	Using the formula obtained in Theorem~\ref{thm:computation}, we can easily compute the value of $\L(\sigma\cdot\A,\Lambda)$, for any $\sigma\in\Aut(\A)$, since we have:
	\begin{equation*}
		\L(\sigma\cdot\A,\Lambda) = \sum_{P\in\Sing(\A)} \sum_{L\ni P} \lambda_\PtoL \big(\ulk_{\sigma\cdot L}(B_{\sigma\cdot\PtoL)}\big),
	\end{equation*}
	where $\sigma\cdot\PtoL$ is the edge of $\Gamma_\A$ which joins the point-vertex $v_{\sigma\cdot P}=v_P$ to the line-vertex $v_{\sigma\cdot L}$.
\end{rmk}

In the following subsections, we give two effective methods to compute $\ulk_{L}(B_{\PtoL})$. The first uses the notion of braided wiring diagram, while the second is more algorithmic and uses changes of variables to compute the different upper-linkings.

\subsection{Computation via the braided wiring diagram}\label{sec:computation_wiring}\mbox{}

Roughly speaking, the \emph{braided wiring diagram} (or shortly the \emph{wiring diagram}) is the trace of the arrangement $\A$ in the fibers over a smooth path $\rho:[0,1]\rightarrow\CC\PP^1$ starting from $q_0$ and passing through all the points of $q(\Sing(\A))$ (see~\cite{Arvola,CohSuc:braid_monodromy} for details). It is a singular braid, whose singular points correspond to the singular points of $\A$. \\

We order the points $\{P_1,\dots,P_m\}$ of $\Sing(\A)$ according to the order of their image in $\rho$, and we re-parameter $\rho$ such that $\rho(i/m)=P_i$, for $i\in\{0,\cdots,m\}$. A wiring diagram $\W(\A)$ of $\A$ can be given as an ordered $|\Sing(\A)|$-tuple of pairs formed by a braid $b_i\in\BB_n$ and a singular point $P_i\in\Sing(\A)$:
\begin{equation*}
	\W(\A)=\big[\, 
	[b_1, P_1],
	\cdots,
	[b_m, P_m]
	\,\big],
\end{equation*}
where $b_i=\A\cap q\inv\big( \widehat{P_{i-1} P_{i}} \big)\in\BB_n$, with $\widehat{P_{i-1} P_{i}}=\rho\big((\frac{i-1}{m},\frac{i}{m})\big)$.

For a fixed $i\in\{1,\cdots,m\}$, the braid $B_{(P_i\to L)}$ can be obtained from $\W(\A)$ as follows. Consider that the path $R_{P_i}$ defined in the previous section is a slight deformation of $\rho\big((0,\frac{i}{m})\big)$ which avoids the points $q(P_1),\dots,q(P_{i-1})$ turning around them counter-clockwise. In such a situation, we define:
\begin{equation*}
	\overline{B}_{\PtoL}=b_1\cdot T_1\cdot \ldots \cdot T_{i-1} \cdot b_i,
\end{equation*}
where $T_j$ is the local positive half-twist of the strands which correspond to the lines of $\A_{P_j}$ (see~\cite[Section~4]{AFG} for an example). The braid $B_{(P_i\to L)}$ is obtained from $\overline{B}_{(P_i\to L)}$ by removing the strands which correspond to the lines $\A_{P_i}$, except $L$.

The computation is completed using Theorem~\ref{thm:computation}.

\subsection{Computation using changes of variables}\label{sec:computation_CoV}\mbox{}

The objective of this method is to apply a linear change of variable $\Delta_{\PtoL}$ on the arrangement $\A$, in order to have a simple computation of $\ulk_L(B_{\PtoL})$.

For an edge $\PtoL$ in $\Gamma_\A$, let $\Delta_{\PtoL}$ be a linear change of variable which sends $F_0$, $F_P$ and $L$ on $x=0$, $x-z=0$ and $y=0$ respectively. We take as path $R_P$ the image by $q$ of the segment which joins $[1:0:0]$ to $[1:0:1]$. The trace of $\Z(\Delta_{\PtoL}(\A))$ in the fibers over $R_P$ is a union of $n$ segments $S_{L_1},\dots,S_{L_n}$ in $R_P\times\CC$ (each one corresponding with a line of $\Delta_{\PtoL}(\A)$). Notice that after the projection $\Re:R_P \times \CC \rightarrow R_P\times\RR$, two segments intersect once at most. 

In order to simplify the computation, we work in the chart $z=1$ and we consider $R_P\times \CC$ as $[0,1]\times\RR^2$, where the first coordinate in $\RR^2$ corresponds to the real part of $\CC$ and the second to the imaginary part.\\

Let $L'$ be a line of $\A\setminus\A_P$. We denote by $(0,q_1,q_2)$ (resp. $(1,p_1,p_2)$) the coordinates in $[0,1]\times\RR^2$ of the intersection point of the line $\Delta_{\PtoL}(L')$ with $x=0$ (resp. $x-1=0$). The segment $S_{L'}$, corresponding in $[0,1]\times\RR^2$ to the line $L'$, is the segment defined by:
\begin{equation*}
	S_{L'}=\big(\ t,\ q_1 (1-t) + p_1 t,\	q_2 (1-t) + p_2 t\ \big), \text{ for }t\in [0,1].
\end{equation*}
Take $\mathbf{t}=q_1/(p_1+q_1)$. We define the function $\Phi_{\PtoL}(L')$ as follows: $\Phi_{\PtoL}(L')=0$ except if $0<\mathbf{t}<1$ and
\begin{itemize}
	\item $q_2 (1-\mathbf{t}) + p_2 \mathbf{t} = 0$ and $q_1>0$, then $\Phi_{\PtoL}(L')=1$;
	\item $q_2 (1-\mathbf{t}) + p_2 \mathbf{t} > 0$ and 
	\begin{itemize}
		\item $q_1>0$, then $\Phi_{\PtoL}(L')=1$;
		\item $q_1<0$, then $\Phi_{\PtoL}(L')=-1$.
	\end{itemize}
\end{itemize}

\begin{rmk}
	The first case in the construction of $\Phi_{\PtoL}$ corresponds to an actual crossing in $R_P\times\RR^2$ between $S_L$ and $S_{L'}$, while the second case corresponds to a virtual crossing. The two sub-cases describe the sign of this virtual crossing.
\end{rmk}

Using the computations made in~\cite[Section~4]{AFG}, we obtain
\begin{equation*}
	\ulk_L(B_\PtoL) = \sum_{L'\in\A\setminus\A_P} \Phi_{\PtoL}(L') \cdot m_{L'},
\end{equation*}
where $m_{L'}$ is a meridian of $L'$. As previously, we conclude using  Theorem~\ref{thm:computation}.

\section{Applications}\label{sec:application}
	\subsection{Ordered Zariski pair with ten lines}\label{sec:oZP}\mbox{}

Let $\zeta$ be a fixed root of the 5th cyclotomic polynomial $Z^4+Z^3+Z^2+Z+1$.  We consider the arrangements $\M_i$, for $i\in\{1,2,3,4\}$, defined by the following equations, where $\alpha=\zeta^i$.

\begin{equation*}
	\begin{array}{l p{0.5cm} l }
		L_1: z=0 &&
		L_2: x =0 \\
		L_3: (\alpha^2+\alpha)x -(\alpha+1)y + z=0 && 
		L_4: y=0\\ 
		L_5: y-z = 0 && 
		L_6: x-z=0\\ 
		L_7: \alpha x -(\alpha+1) y+z=0 && 
		L_8: (\alpha^2+\alpha)x + -(\alpha^2+\alpha+1)y+ z = 0\\ 
		L_9: \alpha x -y + z =0 &&
		L_{10}: \alpha^3 x + y =0
	\end{array}
\end{equation*}

The combinatorics\footnote{To lighten the notation a sub-arrangement $\A_P=\{L_{i_1},\cdots,L_{i_m}\}$ is denoted by $\{i_1,\cdots,i_m\}$.} $C$ shared by the $\M_i$'s is:
\begin{align*}
	C = \big\{\, 
		& \{1, 2, 6\}, \{1, 3, 9\}, \{1, 4, 5\}, \{1, 7\}, \{1, 8, 10\}, \{2, 3, 7\}, \{2, 4, 10\}, \{2, 5, 9\}, \{2, 8\}, \\
		& \{3, 4, 8\}, \{3, 5\}, \{3, 6, 10\}, \{4, 6\}, \{4, 7, 9\}, \{5, 6, 7, 8\}, \{5, 10\}, \{6, 9\}, \{7, 10\}, \{8, 9\}, \{9, 10\}\, \big\}.
\end{align*}
%[ [1, 2, 6], [1, 3, 9], [1, 4, 5], [1, 7], [1, 8, 10], [2, 3, 7], [2, 4, 10], [2, 5, 9], [2, 8], [3, 4, 8], [3, 5], [3, 6, 10], [4, 6], [4, 7, 9], [5, 6, 7, 8], [5, 10], [6, 9], [7, 10], [8, 9], [9, 10] ]

\begin{lemma}
	The automorphsim group of $C$ is the subgroup of order $4$ in the symetric group $\Sigma_{10}$ generated by the permutation:
	\begin{equation*}
		\sigma=(1,2,3,4)(5,6,7,8)(9,10).
	\end{equation*}
	Its action on the lines of $C$ is given by $\sigma\cdot L_i = L_{\sigma(i)}$.
\end{lemma}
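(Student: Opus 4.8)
The plan is to first confirm that $\sigma$ really is an automorphism and then show it generates everything. That $\sigma$ permutes the $20$ blocks of $C$ is a direct finite verification: apply $\sigma=(1,2,3,4)(5,6,7,8)(9,10)$ to each block $\{i_1,\dots,i_m\}$ and check the image again lies in $C$ (e.g.\ $\{1,2,6\}\mapsto\{2,3,7\}$, $\{1,3,9\}\mapsto\{2,4,10\}$, $\{5,6,7,8\}\mapsto\{5,6,7,8\}$, etc.). Since $\sigma$ has cycle type $(4,4,2)$ it has order $4$, so $\langle\sigma\rangle$ is a subgroup of $\Aut(C)$ of order $4$.

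The real content is the reverse inclusion $\Aut(C)\subseteq\langle\sigma\rangle$. I would begin by attaching to each line a combinatorial invariant that every $\tau\in\Aut(C)$ must preserve: the number of points of each multiplicity lying on it. Reading this off from $C$, the point $\{5,6,7,8\}$ is the unique quadruple point, so a line is of ``type B'' precisely when it passes through it, namely $L_5,L_6,L_7,L_8$; among the remaining lines $L_1,L_2,L_3,L_4$ each carry four triple points (``type A'') whereas $L_9,L_{10}$ carry three (``type C''). Hence every automorphism preserves the partition $\{1,2,3,4\}\sqcup\{5,6,7,8\}\sqcup\{9,10\}$, which is exactly the orbit partition of $\sigma$.

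Next I would use the low-multiplicity incidences to show $\tau$ is completely determined by its restriction $\tau|_A$ to $\{1,2,3,4\}$, and that this restriction is constrained. The two triple points of type $(A,A,C)$, namely $\{1,3,9\}$ and $\{2,4,10\}$, force $\tau$ to preserve the partition $\{\{1,3\},\{2,4\}\}$ of the A-block and, correspondingly, determine the action on $\{9,10\}$ (it swaps $9,10$ iff it swaps the two pairs). The four double points of type $(A,B)$, namely $\{1,7\},\{2,8\},\{3,5\},\{4,6\}$, give a perfect matching between the A- and B-blocks, so $\tau|_B$ is dictated by $\tau|_A$. Thus $\tau\mapsto\tau|_A$ is an injection of $\Aut(C)$ into the subgroup of $\Sigma_{\{1,2,3,4\}}$ stabilizing $\{\{1,3\},\{2,4\}\}$, i.e.\ a dihedral group of order $8$, and $\sigma|_A=(1,2,3,4)$ lands in its cyclic subgroup of order $4$.

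Finally, since a subgroup of the dihedral group of order $8$ that contains the $4$-cycle $(1,2,3,4)$ is either that cyclic group of order $4$ or the whole group, it suffices to rule out a single reflection. I would take the permutation extending $\tau|_A=(1,3)$, namely $(1,3)(5,7)$ (it fixes $9,10$ and, through the A--B matching, swaps $5,7$ while fixing $6,8$), and observe that it sends the block $\{1,2,6\}$ to $\{2,3,6\}\notin C$; hence no reflection extends to an automorphism, forcing $\Aut(C)=\langle\sigma\rangle$. The only delicate point is the bookkeeping: one must check that the invariants chosen (the multiplicity profile of a line, then the $(A,A,C)$-triple and $(A,B)$-double signatures) really are preserved and really do pin the map down. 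Each of these is a short finite check, so no serious obstacle remains.
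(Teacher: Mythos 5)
Your proof is correct, and its second half takes a genuinely different route from the paper's. Both arguments open identically: the multiplicity profile of each line (you phrase it via the unique quadruple point $\{5,6,7,8\}$, the paper via full profiles --- $4$ triples $+$ $1$ double for $L_1,\dots,L_4$; $1$ quadruple $+$ $2$ triples $+$ $2$ doubles for $L_5,\dots,L_8$; $3$ triples $+$ $3$ doubles for $L_9,L_{10}$) forces every $\tau\in\Aut(C)$ to preserve the partition $\{1,2,3,4\}\sqcup\{5,6,7,8\}\sqcup\{9,10\}$. From there the paper proceeds by a four-way case analysis on $\tau(L_1)=L_j$, $j\in\{1,2,3,4\}$, tracing in each case a chain of incidences ($\{1,7\}\mapsto\{2,8\}$ forces $\tau(L_7)=L_8$, etc.) that pins $\tau$ down to $\Id,\sigma,\sigma^2,\sigma^3$ respectively (only the case $\tau(L_1)=L_2$ is written out; the rest are asserted to be similar). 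You instead extract two structural invariants --- the perfect matching $\{1,7\},\{2,8\},\{3,5\},\{4,6\}$ between the A- and B-blocks given by the $(A,B)$ double points, and the two $(A,A,C)$ triples $\{1,3,9\},\{2,4,10\}$ controlling both $\{9,10\}$ and the partition $\{\{1,3\},\{2,4\}\}$ --- to show restriction to $\{1,2,3,4\}$ injects $\Aut(C)$ into the order-$8$ stabilizer of that partition in $\Sigma_4$. Since that dihedral group has $\langle(1,2,3,4)\rangle$ as an index-$2$ subgroup, you reduce everything to excluding one reflection, and the unique candidate extension $(1,3)(5,7)$ dies on a single incidence, $\{1,2,6\}\mapsto\{2,3,6\}\notin C$. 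What your approach buys is economy and a cleaner logical skeleton: one explicit exclusion instead of three unspelled forcing chains, at the modest cost of the group-theoretic observation about subgroups of the dihedral group of order $8$. You also explicitly verify $\sigma\in\Aut(C)$ by checking all $20$ blocks, a step the paper leaves implicit but which is genuinely needed to get order exactly $4$ rather than at most $4$. All your finite checks (the two $(A,A,C)$ triples are the only ones, the $(A,B)$ doubles form a matching compatible with $\sigma$, and the failure of $(1,3)(5,7)$) are accurate against the combinatorics $C$ as listed.
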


\begin{proof}
	First, we notice that:
	\begin{itemize}
		\item[-] $L_1,L_2,L_3,L_4$ contain each 4 triple points and 1 double point;
		\item[-]	$L_5,L_6,L_7,L_8$ contain each 1 quadruple point, 2 triple points and 2 double points;
		\item[-] $L_9,L_{10}$ contain each 3 triple points and 3 double points.
	\end{itemize}
	This implies that for any $\tau\in\Aut(C)$, we have $\tau(\{L_1,L_2,L_3,L_4\})=\{L_1,L_2,L_3,L_4\}$, $\tau(\{L_5,L_6,L_7,L_8\})=\{L_5,L_6,L_7,L_8\}$ and $\tau(\{L_9,L_{10}\})=\{L_9,L_{10}\}$.
	We conclude with a case by case study of the implications of $\tau(L_1)=L_j$, for $j\in\{1,2,3,4\}$.
	
	If $\tau(L_1)=L_2$, then:
	\begin{itemize}
		\item[-] the double point $\{1,7\}$ is sent on $\{2,8\}$, so $\tau(L_7)=L_8$;
		\item[-] the double point $\{8,9\}$ is sent on $\{7,10\}$, so $\tau(L_9)=L_{10}$ and $\tau(L_{10})=L_9$;
		\item[-] the triple point $\{1,8,10\}$ (resp. $\{1,3,9\}$) is sent on $\{2,5,9\}$ (resp. $\{2,4,10\}$), so $\tau(L_8)=L_5$ and $\tau(L_3)=L_4$;
		\item[-] the double point $\{3,5\}$ is sent on $\{4,6\}$, so $\tau(L_5)=L_6$;
		\item[-] the triple $\{2,3,7\}$ (resp. $\{1,4,5\}$ and $\{3,6,10\}$) is sent on $\{3,4,8\}$ (resp. $\{2,1,6\}$ and $\{4,7,9\}$), so $\tau(L_2)=L_3$, $\tau(L_4)=L_1$ and $\tau(L_6)=L_7$;
	\end{itemize}
	and thus $\tau=\sigma$. Similarly, if $\tau(L_1)=L_1$, $\tau(L_1)=L_3$ and $\tau(L_1)=L_4$ respectively, then $\tau=\Id$, $\tau=\sigma^2$ and $\tau=\sigma^3$ respectively. 
\end{proof}

\begin{lemma}\label{lem:TLG}
	The tensor linking group $\TLG(C,\ZZ/5\ZZ)$ is isomorphic to $\ZZ/5\ZZ$, and it is generated by the tensor $\Lambda_0$ given in Appendix~\ref{App:values}.
\end{lemma}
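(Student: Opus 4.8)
The plan is to realise $\TLG(C,\ZZ/5\ZZ)$ as the null space of an explicit homogeneous linear system over the field $\ZZ/5\ZZ$ and to solve it. First I would fix coordinates. Because an element of $\HH^1(M(\A);\ZZ/5\ZZ)$ is a character on $\langle m_{L_1},\dots,m_{L_{10}}\rangle/(m_{L_1}+\cdots+m_{L_{10}})$, each coefficient $\lambda_\PtoL$ of a chain $\Lambda=\sum_{P\in\Sing(\A)}\sum_{L\ni P}\lambda_\PtoL\otimes\PtoL$ is recorded by the vector $(\lambda_\PtoL(m_{L_1}),\dots,\lambda_\PtoL(m_{L_{10}}))\in(\ZZ/5\ZZ)^{10}$ subject to $\sum_j\lambda_\PtoL(m_{L_j})=0$. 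The combinatorics $C$ has $52$ incidences $P\in L$ (the $10$ triple points, the quadruple point and the $9$ double points contribute $30+4+18$ edges to the incidence graph), so a chain is described by $52$ such vectors.

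Next I would turn the three defining constraints into linear equations in these unknowns. Collecting, in the boundary condition~\eqref{eq:boundary}, the coefficient of each vertex of the incidence graph splits it into the cycle relations $\sum_{L\ni P}\lambda_\PtoL=0$ (one per singular point $P$) and $\sum_{P\in L}\lambda_\PtoL=0$ (one per line $L$, the sum ranging over the singular points $P$ on $L$), both read as equalities of characters. Conditions~(I) and~(II) are local to each edge: (I) imposes $\lambda_\PtoL(m_{L'})=0$ for every $L'\ni P$, and (II) imposes $\sum_{L'\ni P'}\lambda_\PtoL(m_{L'})=0$ for every $P'\in\Sing(\A)$ lying on $L$. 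Assembling all of these produces one homogeneous system over $\ZZ/5\ZZ$ whose solution set is exactly $\TLG(C,\ZZ/5\ZZ)$; being the kernel of a linear map between $\ZZ/5\ZZ$-vector spaces, it is itself a $\ZZ/5\ZZ$-vector space, and the assertion $\TLG(C,\ZZ/5\ZZ)\cong\ZZ/5\ZZ$ amounts to its having dimension one.

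I would then pin this dimension by two inequalities. For $\dim\geq 1$ I would verify directly that the explicit tensor $\Lambda_0$ of Appendix~\ref{App:values} is nonzero and satisfies the boundary condition together with Conditions~(I)--(II), giving an embedding $\ZZ/5\ZZ\hookrightarrow\TLG(C,\ZZ/5\ZZ)$. For $\dim\leq 1$ I would compute the rank of the assembled system by Gaussian elimination over $\ZZ/5\ZZ$ and confirm a nullity of $1$; this is a finite, computer-assisted step. To shrink it and to obtain an independent check I would use the order-four automorphism $\sigma$ of the previous lemma: it acts $\ZZ/5\ZZ$-linearly on the tensor space and preserves $\TLG(C,\ZZ/5\ZZ)$, and since $5\equiv 1\pmod 4$ the field $\ZZ/5\ZZ$ contains a primitive fourth root of unity, so the ambient space splits into four $\sigma$-eigenspaces and the system can be solved one block at a time. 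The main obstacle is exactly the upper bound: the local conditions~(I)--(II) already cut each edge-character down sharply, but it is their interaction with the global cycle relations that must be controlled in order to rule out accidental solutions beyond the multiples of $\Lambda_0$, and this is where essentially all the content of the lemma resides. Once the nullity is shown to be $1$, combining it with the generator $\Lambda_0$ yields $\TLG(C,\ZZ/5\ZZ)\cong\ZZ/5\ZZ$.
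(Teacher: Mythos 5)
Your proposal takes essentially the same route as the paper: the published proof likewise realises $\TLG(C,\ZZ/5\ZZ)$ as the right kernel of the matrix over $\ZZ/5\ZZ$ assembled from the linear forms of the boundary condition~(\ref{eq:boundary}) and Conditions~(I)--(II), and then checks by a direct (computer-assisted) computation that this kernel is one-dimensional and generated by $\Lambda_0$. Your extra $\sigma$-eigenspace splitting (using that $\ZZ/5\ZZ$ contains a primitive fourth root of unity) is a legitimate refinement the paper does not bother with, but the core argument is identical.
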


\begin{proof}
	The tensor linking group can be viewed as the right kernel of the matrix with $\ZZ/5\ZZ$ coefficients given by the linear forms associated to the equations~(\ref{eq:boundary}) and those of Conditions~(I) and~(II) of the definition of the tensor linking group (see Section~\ref{sec:construction}). A simple computation shows that this kernel is generated by $\Lambda_0$, then we obtain the lemma.
\end{proof}

To compute the loop linking number $\L(\M_i,\Lambda)$, we use here the method described in Section~\ref{sec:computation_wiring}. To minimize the size of the braids $b_i$ in the wiring diagrams, we consider $P_0$ in a small neighborhood of $P_{5,6,7,8}$ and $F_0$ in a tubular neighborhood of $L_8$ (this makes the lines $L_5$, $L_6$, $L_7$ and $L_8$ "almost vertical"). Using the notation\footnote{The braids $B_{\PtoL}$ are given as tuple of integer $(i_1,\cdots,i_k)$, where a positive integer $i$ in the tuple indicates a positive crossing between the strands number $i$ and $|i+1|$; while a negative $i$ indicates a negative crossing between strands numbers $|i|$ and $|i|+1$. The singular points $P_{i_1,\cdots, i_m}$ are expressed as the tuple $[i_1,\dots,i_m]$.} described in Section~\ref{sec:computation_wiring}, we have the following braided wiring diagrams\footnote{A wiring diagram of $\M_2$ (resp. $\M_4$) can be obtained from the one of $\M_3$ (resp. $\M_1$) by the inversion of the sign of the virtual crossings.} for $\M_1$ and $\M_3$, they are also pictured in Figure~\ref{fig:wiring_M1} and~\ref{fig:wiring_M3}.

\begin{figure}[h!]
	\begin{tikzpicture}
	\begin{scope}[xscale=0.25,yscale=-0.4]
\node at (-10,1) {$L_{8}$};
\node at (-10,2) {$L_{6}$};
\node at (-10,3) {$L_{7}$};
\node at (-10,4) {$L_{5}$};

\node at (-10,5) {$L_{2}$};

\node at (-10,6) {$L_{1}$};
\node at (-10,7) {$L_{10}$};

\node at (-10,8) {$L_{9}$};

\node at (-10,9) {$L_{4}$};
\node at (-10,10) {$L_{3}$};

\nocrossing{10}{-9}

\actualcrossing{10}{-8}{1}{4}
\actualcrossing{10}{-5}{4}{2}
\actualcrossing{10}{-4}{5}{3}
\actualcrossing{10}{-2}{7}{2}
\actualcrossing{10}{-1}{8}{3}

\actualcrossing{10}{1}{6}{3}
\undercrossing{10}{3}{7}
\overcrossing{10}{4}{8}
\overcrossing{10}{5}{4}
\overcrossing{10}{6}{5}
\actualcrossing{10}{7}{3}{3}
\actualcrossing{10}{9}{5}{3}
\actualcrossing{10}{11}{7}{2}
\actualcrossing{10}{12}{8}{2}
\overcrossing{10}{13}{5}
\undercrossing{10}{14}{4}
\undercrossing{10}{15}{3}
\undercrossing{10}{16}{7}
\actualcrossing{10}{17}{2}{3}
\actualcrossing{10}{19}{4}{2}
\actualcrossing{10}{20}{5}{2}
\actualcrossing{10}{21}{6}{3}
\overcrossing{10}{23}{6}
\overcrossing{10}{24}{2}
\overcrossing{10}{25}{5}
\actualcrossing{10}{26}{4}{2}
\overcrossing{10}{27}{3}
\actualcrossing{10}{28}{1}{3}
\actualcrossing{10}{30}{3}{2}
\actualcrossing{10}{31}{4}{2}
\actualcrossing{10}{32}{5}{3}
\undercrossing{10}{34}{1}
\undercrossing{10}{35}{2}
\undercrossing{10}{36}{1}
\undercrossing{10}{37}{3}
\undercrossing{10}{38}{4}
\undercrossing{10}{39}{5}
\actualcrossing{10}{40}{2}{3}
\nocrossing{10}{42}
	\end{scope}
\end{tikzpicture}
	\vspace{-0.5cm}
	\caption{Braided wiring diagram of $\M_1$.\label{fig:wiring_M1}}
\end{figure}

\begin{figure}[h!]
	\begin{tikzpicture}
	\begin{scope}[xscale=0.25,yscale=-0.4]
	
\node at (-10,1) {$L_{8}$};
\node at (-10,2) {$L_{7}$};
\node at (-10,3) {$L_{5}$};
\node at (-10,4) {$L_{6}$};

\node at (-10,5) {$L_{1}$};
\node at (-10,6) {$L_{10}$};

\node at (-10,7) {$L_{9}$};

\node at (-10,8) {$L_{4}$};
\node at (-10,9) {$L_{3}$};

\node at (-10,10) {$L_{2}$};

\nocrossing{10}{-9}

\actualcrossing{10}{-8}{1}{4}
\actualcrossing{10}{-5}{4}{3}
\actualcrossing{10}{-3}{6}{2}
\actualcrossing{10}{-2}{7}{3}
\actualcrossing{10}{0}{9}{2}

\actualcrossing{10}{1}{5}{3}
\undercrossing{10}{3}{6}
\undercrossing{10}{4}{5}
\overcrossing{10}{5}{8}
\overcrossing{10}{6}{7}
\actualcrossing{10}{7}{3}{2}
\actualcrossing{10}{8}{4}{2}
\actualcrossing{10}{9}{5}{3}
\actualcrossing{10}{11}{7}{3}
\overcrossing{10}{13}{5}
\undercrossing{10}{14}{6}
\overcrossing{10}{15}{4}
\undercrossing{10}{16}{7}
\actualcrossing{10}{17}{2}{2}
\actualcrossing{10}{18}{3}{2}
\actualcrossing{10}{19}{4}{3}
\actualcrossing{10}{21}{6}{3}
\overcrossing{10}{23}{6}
\overcrossing{10}{24}{5}
\actualcrossing{10}{25}{1}{3}
\actualcrossing{10}{27}{3}{2}
\actualcrossing{10}{28}{4}{2}
\actualcrossing{10}{29}{5}{3}
\undercrossing{10}{31}{2}
\actualcrossing{10}{32}{3}{2}
\undercrossing{10}{33}{3}
\undercrossing{10}{34}{4}
\overcrossing{10}{35}{2}
\undercrossing{10}{36}{5}
\overcrossing{10}{37}{5}
\actualcrossing{10}{38}{2}{3}
\nocrossing{10}{40}
	\end{scope}
\end{tikzpicture}
	\vspace{-0.5cm}
	\caption{Braided wiring diagram of $\M_3$.\label{fig:wiring_M3}}
\end{figure}

\begin{align*}
	\W(\M_1) = \big[\,
	& [(),[8,6,7,5]], [(),[8,2]], [(),[8,1,10]], [(),[8,9]], [(),[8,4,3]],\\ 
	& [(), [1, 9, 3]], [(-7, 8, 4, 5), [6, 10, 3]], [(), [6, 2, 1]], [(), [6, 4]], [(), [6, 9]], \\
	& [(5, -4, -3, -7), [7, 2, 3]], [(), [7, 10]], [(), [7, 1]], [(), [7, 9, 4]], [(6, 2, 5), [10, 9]], \\
	& [(3), [5, 2, 9]], [(), [5, 3]], [(), [5, 10]], [(), [5, 1, 4]], [(-1, -2, -1, -3, -4, -5), [2, 10, 4]]\, \big],
\end{align*}

\begin{align*}
	\W(\M_3) = \big[\, 
	& [(),[8,7,5,6]], [(),[8,1,10]], [(),[8,9]], [(),[8,4,3]], [(),[8,2]], \\
	& [(), [1, 9, 3]], [(-6, -5, 8, 7), [7, 10]], [(), [7, 1]], [(), [7, 3, 2]], [(), [7, 9, 4]], \\
	& [(5, -6, 4, -7), [5, 10]], [(), [5, 3]], [(), [5, 1, 4]], [(), [5, 9, 2]], [(6, 5), [6, 10, 3]], \\
	& [(), [6, 4]], [(), [6, 9]], [(), [6, 1, 2]], [(-2), [10, 9]], [(-3, -4, 2, -5, 5), [10, 4, 2]]\, \big].
\end{align*}

\begin{theorem}\label{thm:ooZP}
	For any non-trivial $\Lambda\in\TLG(C,\ZZ/5\ZZ)$, we have
	\begin{equation*}
		\L(\M_i,\Lambda) \neq 0.
	\end{equation*}
\end{theorem}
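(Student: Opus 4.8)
The plan is to compute $\L(\M_i,\Lambda)$ explicitly for each $i\in\{1,2,3,4\}$ using the braided-wiring-diagram method of Section~\ref{sec:computation_wiring} and Theorem~\ref{thm:computation}, and to verify that the result is nonzero for the generator $\Lambda_0$ of $\TLG(C,\ZZ/5\ZZ)$ (and hence for every nonzero multiple, since $\TLG(C,\ZZ/5\ZZ)\simeq\ZZ/5\ZZ$ by Lemma~\ref{lem:TLG}). First I would observe that, by linearity of $\Lambda\mapsto\L(\M_i,\Lambda)$ over $\ZZ/5\ZZ$ and the fact that every nonzero element of $\ZZ/5\ZZ$ is a unit, it suffices to show $\L(\M_i,\Lambda_0)\neq 0$ for each $i$; any other nontrivial $\Lambda$ equals $c\Lambda_0$ with $c\in(\ZZ/5\ZZ)^\times$, so $\L(\M_i,\Lambda)=c\,\L(\M_i,\Lambda_0)\neq 0$.

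Next, for each edge $\PtoL$ of $\Gamma(\A)$ I would extract the sub-braid $B_\PtoL$ from the wiring diagram $\W(\M_i)$. Concretely, following Section~\ref{sec:computation_wiring}, I form $\overline{B}_\PtoL=b_1\cdot T_1\cdots T_{j-1}\cdot b_j$ for the singular point $P=P_j$ (reading off the listed braids $b_k$ and inserting the local positive half-twists $T_k$ at each intermediate singular point), and then delete the strands corresponding to $\A_{P}\setminus\{L\}$. From the resulting diagram $D_\PtoL=\sigma_1^{\epsilon_1}\cdots\sigma_k^{\epsilon_k}$ I read the upper-linking $\ulk_L(B_\PtoL)=\sum_i\epsilon_i\,\delta_L(\sigma_i^{\epsilon_i})$, recording for each crossing whether $L$ is the under-crossing strand and, if so, adding (with sign) the meridian of the over-crossing strand. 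Summing $\lambda_\PtoL(\ulk_L(B_\PtoL))$ over all edges via Theorem~\ref{thm:computation} gives $\L(\M_i,\Lambda_0)\in\ZZ/5\ZZ$, where the coefficients $\lambda_\PtoL$ are those of $\Lambda_0$ recorded in Appendix~\ref{App:values}.

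For the two arrangements $\M_1,\M_3$ whose diagrams are displayed, this is a direct (if lengthy) bookkeeping computation. For $\M_2$ and $\M_4$ I would use the remark in the footnote that their wiring diagrams are obtained from those of $\M_3$ and $\M_1$ respectively by reversing the signs of the virtual crossings; propagating this sign change through the upper-linking formula lets me deduce $\L(\M_2,\Lambda_0)$ and $\L(\M_4,\Lambda_0)$ from the corresponding computations for $\M_3$ and $\M_1$. Alternatively, since $\alpha=\zeta^i$ and the four arrangements are Galois-conjugate, Proposition~\ref{prop:Galois_invariance} constrains the four values to be Galois-conjugate in $\ZZ/5\ZZ$, so once one value is shown nonzero the others are forced to be nonzero as well (a Galois automorphism acts invertibly on $\ZZ/5\ZZ$).

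The main obstacle is purely the combinatorial accounting: correctly tracking strand positions as they permute through the wiring diagram, identifying at each crossing which strand is the under-strand and which line its meridian is, and carrying the signs $\epsilon_i$ of the virtual and actual crossings. Errors here are easy to make and hard to spot, so I would organize the computation point-by-point over $\Sing(\A)$ and cross-check the final sum against the constraints imposed by $\Lambda_0\in\TLG(C,\ZZ/5\ZZ)$ (Conditions~(I) and~(II)) and against the Galois-conjugacy relation among the $\M_i$. The conceptual content is entirely supplied by Theorem~\ref{thm:computation} and Lemma~\ref{lem:TLG}; what remains is to certify that the explicit value is a nonzero element of $\ZZ/5\ZZ$.
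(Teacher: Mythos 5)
Your proposal is correct and matches the paper's proof: the paper likewise reduces to the generator $\Lambda_0$ via Lemma~\ref{lem:TLG}, invokes Proposition~\ref{prop:Galois_invariance} to handle all four Galois-conjugate arrangements at once (computing only $\L(\M_1,\Lambda_0)=2$ rather than all four values, as in your ``alternatively'' route), and certifies the nonzero value by the wiring-diagram computation of Section~\ref{sec:computation_wiring} with the data of Appendix~\ref{App:values}. Your primary plan of computing all four arrangements explicitly is redundant but harmless; the Galois reduction you mention is exactly what the paper does.
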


\begin{proof}
	By Proposition~\ref{prop:Galois_invariance}, it is enough to prove the result only for $\M_1$. By Lemma~\ref{lem:TLG}, we can restrict to the non-trivial element $\Lambda_0$ of $\TLG(C,\ZZ/5\ZZ)$. To compute the loop linking number, we apply the method described in Section~\ref{sec:computation_wiring} using the values given in Appendix~\ref{App:values}, and we obtain $\L(\M_1,\Lambda_0)=2$.
\end{proof}

Combining the previous theorem with Theorem~\ref{thm:invariance_complement}, we deduce the following corollary.

\begin{corollary}\label{cor:oZP}
	If $i\neq j \in\{1,2,3,4\}$, then there does not exist an ordered and oriented homeomorphism from $M(\M_i)$ to $M(\M_j)$.
\end{corollary}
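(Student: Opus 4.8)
The plan is to argue by contradiction, using Theorem~\ref{thm:invariance_complement} as the obstruction and Proposition~\ref{prop:Galois_invariance} to pin down all four values of the loop linking number explicitly; the essential point is that these values are not merely nonzero (as in Theorem~\ref{thm:ooZP}) but pairwise distinct.

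First I would record that $\M_1,\dots,\M_4$ share the combinatorics $C$ and are combinatorially stable (this I would verify directly from $C$ and the description of $\Aut(C)$ above), so that Theorem~\ref{thm:invariance_complement} applies. Suppose, for contradiction, that for some $i\neq j$ there is an ordered and oriented homeomorphism $h:M(\M_i)\to M(\M_j)$. Since $h$ respects the order, the combinatorial isomorphism it induces is the unique order-preserving one, that is the identity, and hence the induced map $\tilde h$ is the canonical identification of $\TLG(\M_i,\ZZ/5\ZZ)$ with $\TLG(\M_j,\ZZ/5\ZZ)=\TLG(C,\ZZ/5\ZZ)$; in particular $\tilde h(\Lambda_0)=\Lambda_0$. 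Theorem~\ref{thm:invariance_complement} then forces $\L(\M_i,\Lambda_0)=\L(\M_j,\Lambda_0)$.

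Next I would compute all four values. As $\M_k$ is obtained from $\M_1$ by the Galois substitution $\zeta\mapsto\zeta^k$, that is $\M_k=\sigma_k\cdot\M_1$ with $\sigma_k\in\Gal(\QQ(\zeta)/\QQ)$, Proposition~\ref{prop:Galois_invariance} gives $\L(\M_k,\Lambda_0)=\sigma_k\cdot\L(\M_1,\Lambda_0)$. The element $\sigma_k$ acts on $\ZZ/5\ZZ$ as multiplication by a unit determined by $k$, so starting from $\L(\M_1,\Lambda_0)=2$ (computed in the proof of Theorem~\ref{thm:ooZP}) the four values are $2,4,1,3$, i.e. the four distinct nonzero residues modulo $5$. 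The precise matching of $k$ to a residue is immaterial: as $k$ runs through the units of $\ZZ/5\ZZ$ the products $\sigma_k\cdot 2$ run through all nonzero residues, so the values are pairwise distinct whatever the normalization of the action.

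Finally, distinctness gives $\L(\M_i,\Lambda_0)\neq\L(\M_j,\Lambda_0)$ for $i\neq j$, contradicting the equality produced by $h$; hence no such homeomorphism exists. I expect the main subtlety to lie not in the contradiction itself but in justifying the two inputs that make it sharp: that order-preservation really forces $\tilde h=\Id$ on $\TLG(C,\ZZ/5\ZZ)$ rather than some nontrivial unit multiple (which would destroy the argument), and that the Galois action on $\ZZ/5\ZZ$ is by a full unit, so that the conjugate values exhaust the nonzero residues and are therefore distinct.
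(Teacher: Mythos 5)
Your proof is correct and follows essentially the same route as the paper, whose one-sentence proof ``combining Theorem~\ref{thm:ooZP} with Theorem~\ref{thm:invariance_complement}'' tacitly relies on the Galois equivariance of Proposition~\ref{prop:Galois_invariance} (already used to prove Theorem~\ref{thm:ooZP}) to give the four values $\L(\M_k,\Lambda_0)=2k \bmod 5$. You simply make explicit what the paper leaves implicit --- the upgrade from nonvanishing to pairwise distinctness of the conjugate values, and the fact that order-preservation forces $\tilde{h}(\Lambda_0)=\Lambda_0$ since $\tilde{h}$ is induced by the (identity) isomorphism of ordered combinatorics --- and both of these justifications are sound.
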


\subsection{Eleven lines Zariski pair with non-isomorphic fundamental groups}\label{sec:ZP}\mbox{}

In this Section, we use a similar argument as in~\cite{ACCM,Gue:ZP} to produce a Zariski pair. Consider the arrangement $\mathfrak{M}_i=\M_i\cup L^i_{11}$, where $L^i_{11}$ is the line defined by $(\alpha^2+2\alpha+1)x + (\alpha^3-\alpha-2)y + z=0$ (if the considered arrangement $\M_i$ is obvious, the exponent $i$ will sometime be omitted). This line passes through the points $P_{8,9}$ and $P_{5,10}$ of $\M_i$. Notice that the line $L^i_{11}$ intersects the lines $L_k\in\M_i$, for $k\in\{1,2,3,4,5,7\}$, outside $\Sing(\M_i)$. In other words, the addition of the line $L^i_{11}$ to the arrangement $\M_i$ creates only two dense singular points (both of multiplicity $3$). So the combinatorics $\mathfrak{C}$ shared by the $\mathfrak{M}_i$'s is:
\begin{align*}
	\mathfrak{C} = \big\{\, 
	& \{1, 2, 6\}, \{1, 3, 9\}, \{1, 4, 5\}, \{1, 7\}, \{1, 8, 10\}, \{1, 11\}, \{2, 3, 7\}, \{2, 4, 10\}, \\
	& \{2, 5, 9\}, \{2, 8\}, \{2, 11\}, \{3, 4, 8\}, \{3, 5\}, \{3, 6, 10\}, \{3, 11\}, \{4, 6\}, \{4, 7, 9\}, \\
	& \{4, 11\}, \{5, 6, 7, 8\}, \{5, 10, 11\}, \{6, 9\}, \{6, 11\}, \{7, 10\}, \{7, 11\}, \{8, 9, 11\}, \{9, 10\}\, \big\}.
\end{align*}

\begin{lemma}\label{lem:trivial_automorphism}
	The automorphism group of $\mathfrak{C}$ is trivial.
\end{lemma}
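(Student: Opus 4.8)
The plan is to show that the automorphism group $\Aut(\mathfrak{C})$ is trivial by leveraging the computation already carried out for $\Aut(C)$. Since $\mathfrak{M}_i = \M_i \cup L^i_{11}$ and the only new singular points created are the two triple points $\{5,10,11\}$ and $\{8,9,11\}$, any automorphism $\tau$ of $\mathfrak{C}$ that fixes the line $L_{11}$ setwise must restrict to an automorphism of the underlying combinatorics $C$ of $\M_i$ (the first ten lines). Indeed, deleting $L_{11}$ and the incidences it carries recovers $C$ exactly. Thus the first step is to argue that \emph{every} automorphism of $\mathfrak{C}$ must fix $L_{11}$, and the second step is to show that the only element of $\Aut(C)$ extending to $\mathfrak{C}$ is the identity.

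For the first step, I would single out $L_{11}$ combinatorially among the eleven lines. A natural invariant is the multiset of multiplicities of the singular points lying on each line, together with how many double points it carries. Examining $\mathfrak{C}$, the line $L_{11}$ passes through the points $\{1,11\}, \{2,11\}, \{3,11\}, \{4,11\}, \{6,11\}, \{7,11\}$ (six double points) and the two triple points $\{5,10,11\}$ and $\{8,9,11\}$; so $L_{11}$ contains exactly two triple points and six double points, and no quadruple point. I would compute the analogous incidence profiles for $L_1,\dots,L_{10}$ in $\mathfrak{C}$ (which change from their profiles in $C$ only on the six lines $L_1,\dots,L_5,L_7$ that meet $L_{11}$ transversally, each gaining one double point) and verify that none of them matches the profile of $L_{11}$. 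This pins $\tau(L_{11}) = L_{11}$, and hence $\tau$ preserves the ten-line sub-combinatorics $C$.

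For the second step, I would use the fact established in the previous lemma that $\Aut(C)$ is the cyclic group of order $4$ generated by $\sigma = (1,2,3,4)(5,6,7,8)(9,10)$. It then suffices to check that no nontrivial power of $\sigma$ preserves the two new triple points $\{5,10,11\}$ and $\{8,9,11\}$ (now with $\tau(L_{11})=L_{11}$). Since $\sigma$ sends $L_5 \mapsto L_6$, $L_{10}\mapsto L_9$, the point $\{5,10,11\}$ would have to map to $\{6,9,11\}$, which is not a singular point of $\mathfrak{C}$; hence $\sigma \notin \Aut(\mathfrak{C})$. A similar check disposes of $\sigma^2$ and $\sigma^3$: one traces the images of $\{5,10\}$ and $\{8,9\}$ under each power and observes that the resulting pair, once joined to $L_{11}$, is not a triple point of $\mathfrak{C}$. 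Only $\Id$ survives, giving $\Aut(\mathfrak{C}) = \{\Id\}$.

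The main obstacle I expect is purely bookkeeping: one must be careful that the incidence profile used to isolate $L_{11}$ is genuinely distinguishing, i.e.\ that the extra double points acquired by $L_1,\dots,L_5,L_7$ do not accidentally make one of them combinatorially indistinguishable from $L_{11}$, and that $L_6, L_8, L_9, L_{10}$ (which do not meet $L_{11}$ in a new node) retain profiles distinct from that of $L_{11}$. Once the profiles are tabulated this is immediate, but it is the step where an error would hide. The remainder reduces to the finite verification that $\sigma, \sigma^2, \sigma^3$ each fail to preserve the set $\{\{5,10,11\},\{8,9,11\}\}$, which is a direct consequence of the explicit form of $\sigma$ and the list of singular points of $\mathfrak{C}$.
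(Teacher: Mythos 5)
Your proposal is correct and follows the same skeleton as the paper's proof: single out $L_{11}$ combinatorially, deduce that every automorphism of $\mathfrak{C}$ restricts to an element of $\Aut(C)=\langle\sigma\rangle$, then eliminate the nontrivial powers of $\sigma$ by a finite check. The two proofs differ in both halves' details. For the first half, the paper uses a coarser invariant than your full profile: $L_{11}$ is the unique line containing only \emph{two dense} points (every other line contains three or four points of multiplicity $\geq 3$), which makes the tabulation you worry about essentially trivial. For the second half, the paper does not look at the images of the new triple points at all; instead it observes that $L_5$ and $L_8$ are the only lines of $\mathfrak{C}$ with one quadruple and three triple points, that among $\{\Id,\sigma,\sigma^2,\sigma^3\}$ only $\Id$ and $\sigma^3$ send $L_5$ into $\{L_5,L_8\}$, and then kills $\sigma^3$ by group closure (since $\sigma^3$ generates all of $\ZZ/4\ZZ$). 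Your elimination is more direct and avoids the group-theoretic step, at the cost of needing the subtlety you half-glossed: $\sigma^3$ actually sends $\{5,10,11\}$ to $\{8,9,11\}$, which \emph{is} a point of $\mathfrak{C}$, so $\sigma^3$ is only excluded by the image of the other triple, $\sigma^3(\{8,9,11\})=\{7,10,11\}$, which is not a singular point; your instruction to trace both $\{5,10\}$ and $\{8,9\}$ does cover this, but the phrase ``the resulting pair \dots is not a triple point'' is false for one of the two pairs under $\sigma^3$. One bookkeeping slip to fix (it echoes a typo in the paper itself): since $L_{11}$ passes through $P_{5,10}$ and $P_{8,9}$, the lines gaining a new double point are $L_1,L_2,L_3,L_4,L_6,L_7$ (note the doubles $\{6,11\}$, not $\{5,11\}$, in $\mathfrak{C}$), while $L_5,L_8,L_9,L_{10}$ do not ``retain'' their profiles --- their old doubles $\{5,10\}$ and $\{8,9\}$ are promoted to triples. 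The corrected tabulation still isolates $L_{11}$ (profile: two triples, six doubles, no quadruple, against four dense points for $L_1,\dots,L_5,L_8,L_9,L_{10}$ and three for $L_6,L_7$), so the conclusion stands.
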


\begin{proof}
	The line $L_{11}$ is the only one of $\mathfrak{C}$ which contains only two dense singular points. So any automorphism of $\mathfrak{C}$ fixes $L_{11}$, thus the automorphism group of $\mathfrak{C}$ is a subgroup of $\Aut(C)$. The lines $L_5$ and $L_8$ are the only ones that contain 1 quadruple point and 3 triple points. So, any automorphism of $\mathfrak{C}$ fixes $\{L_5,L_8\}$. The only permutations of $\Aut(C)$ verifying this condition are $\Id$ and $\sigma^3$. Since $\Aut(\mathfrak{C})$ is a group, the only possibility is that the automorphism group is trivial. 
\end{proof}

\begin{theorem}\label{thm:ZP}
	Let $i\neq j \in\{1,2,3,4\}$, if $i+j\not\equiv 0 \mod 5$ then there does not exist a homeomorphism from $M(\mathfrak{M}_i)$ to $M(\mathfrak{M}_j)$.
\end{theorem}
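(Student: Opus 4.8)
The plan is to combine the topological invariance of the full loop linking number (Theorem~\ref{thm:full_loop_linking}) with the concrete computation of $\L$ carried out in the previous subsection. Since Lemma~\ref{lem:trivial_automorphism} establishes that $\Aut(\mathfrak{C})$ is trivial, Remark~\ref{rmk:trivial_automorphism} applies: the set $\{\L(\mathfrak{M}_i,\Lambda)^{\pm 1}\}$ is a genuine topological invariant of $M(\mathfrak{M}_i)$. So a homeomorphism $M(\mathfrak{M}_i)\to M(\mathfrak{M}_j)$ would force $\L(\mathfrak{M}_i,\Lambda)=\L(\mathfrak{M}_j,\tilde{h}(\Lambda))^{\pm 1}$ for a suitable $\Lambda$, and the strategy is to exhibit a $\Lambda$ for which this equality fails whenever $i+j\not\equiv 0\bmod 5$.

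First I would verify that the arrangements $\mathfrak{M}_i$ are combinatorially stable, so that Theorem~\ref{thm:full_loop_linking} is applicable; this should follow from the fact that the only dense points created by adding $L_{11}$ are of multiplicity $3$, matching the situation already handled for the $\M_i$. Next I would identify the relevant tensor linking group $\TLG(\mathfrak{C},\ZZ/5\ZZ)$ and relate it to $\TLG(C,\ZZ/5\ZZ)$: adding a line through two existing double points should not destroy the generating tensor $\Lambda_0$ of Lemma~\ref{lem:TLG}, so I would take $\Lambda$ to be (the natural extension of) $\Lambda_0$. The key computation is then to show, using the braided wiring diagrams of Figures~\ref{fig:wiring_M1} and~\ref{fig:wiring_M3} and the method of Section~\ref{sec:computation_wiring} already used in Theorem~\ref{thm:ooZP}, that $\L(\mathfrak{M}_i,\Lambda)$ equals a value depending on $i$ in $\ZZ/5\ZZ$—by the Galois-conjugacy Proposition~\ref{prop:Galois_invariance} and the computation $\L(\M_1,\Lambda_0)=2$, one expects $\L(\mathfrak{M}_i,\Lambda)=2i$ (or a fixed nonzero multiple of $i$) modulo $5$.

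With these values in hand, the conclusion is arithmetic. Since $\Aut(\mathfrak{C})$ is trivial, the induced isomorphism $\tilde{h}$ on $\TLG$ is forced to be the identity (or its inverse after complex conjugation), so a homeomorphism would give $2i \equiv \pm 2j \pmod 5$. The case $2i\equiv 2j$ is excluded because $i\neq j$ in $\{1,2,3,4\}$, and the case $2i\equiv -2j$ reduces precisely to $i+j\equiv 0\bmod 5$, which is the hypothesis we have excluded. Hence no homeomorphism $M(\mathfrak{M}_i)\to M(\mathfrak{M}_j)$ can exist.

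The main obstacle I anticipate is controlling the sign ambiguity coming from the $\pm 1$ in the full loop linking number, which encodes whether the homeomorphism reverses the global orientation (equivalently, passes through complex conjugation $\M_i\mapsto\overline{\M_i}=\M_{5-i}$) or reverses meridian orientations. One must check that the only way to match loop linking numbers via the inverse is exactly the conjugate pairing $i+j\equiv 0\bmod 5$, and that no spurious coincidence $2i\equiv\pm 2j$ arises for distinct $i,j$ in $\{1,2,3,4\}$; this is a finite check but it is where the precise value of $\L(\M_1,\Lambda_0)$ and the behaviour of $\L$ under the Galois action genuinely matter. A secondary technical point is ensuring the extension of $\Lambda_0$ from $C$ to $\mathfrak{C}$ still lies in $\TLG(\mathfrak{C},\ZZ/5\ZZ)$ and that its loop linking number is computed consistently with the new line $L_{11}$ included in the wiring diagram.
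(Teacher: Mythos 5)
Your proposal is correct and takes essentially the same route as the paper: the paper likewise extends $\Lambda_0$ through a natural injection $\mu:\TLG(C,\ZZ/5\ZZ)\hookrightarrow\TLG(\mathfrak{C},\ZZ/5\ZZ)$ satisfying $\L(\mathfrak{M}_k,\mu(\Lambda_0))=\L(\M_k,\Lambda_0)$ (justified by the multiplicativity theorem rather than a fresh wiring-diagram computation), invokes Lemma~\ref{lem:trivial_automorphism} together with Remark~\ref{rmk:trivial_automorphism}, and concludes with the Galois-derived values $\L(\M_i,\Lambda_0)=2i \bmod 5$ exactly as in your final arithmetic step. The sign analysis you flag as the main obstacle is precisely what the $\pm 1$ in the full loop linking number absorbs, and your finite check that $2i\equiv\pm 2j \pmod 5$ forces $i=j$ or $i+j\equiv 0$ is the (implicit) closing argument of the paper.
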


\begin{proof}
	We have a natural injection\footnote{This can be viewed as a consequence of Theorem~\ref{thm:multiplicativity} presented below.} $\mu:\TLG(C,\ZZ/5\ZZ)\hookrightarrow\TLG(\mathfrak{C},\ZZ/5\ZZ)$ which verifies $\L(\mathfrak{M}_k,\mu(\Lambda_0))=\L(\M_k,\Lambda_0)$. By Lemma~\ref{lem:trivial_automorphism}, the automorphism group of $\mathfrak{C}$ is trivial, then we conclude using Remark~\ref{rmk:trivial_automorphism}.
\end{proof}

\begin{theorem}\label{thm:grp_fonda}
	Let $i\neq j \in\{1,2,3,4\}$, if $i+j\not\equiv 0 \mod 5$ then
	\begin{equation*}
		\pi_1(M(\mathfrak{M}_i)) \not\simeq \pi_1(M(\mathfrak{M}_j)).
	\end{equation*}
	%the fundamental groups of the complements of $\mathfrak{M}_i$ and $\mathfrak{M}_j$ are not isomorphic.
\end{theorem}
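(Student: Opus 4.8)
The plan is to show that the fundamental groups $\pi_1(M(\mathfrak{M}_i))$ and $\pi_1(M(\mathfrak{M}_j))$ are non-isomorphic by leveraging the topological distinction already established in Theorem~\ref{thm:ZP} together with the rigidity of the combinatorics encoded in Lemma~\ref{lem:trivial_automorphism}. The central difficulty is that Theorem~\ref{thm:ZP} rules out a \emph{homeomorphism} of complements, but an isomorphism of fundamental groups is a priori a strictly weaker condition; I must promote an abstract group isomorphism into data rigid enough that the loop linking number obstruction applies. This is exactly the gap bridged by the arguments of~\cite{ACGM}, which the introduction already signals as the intended tool.

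First I would recall that, since $\mathfrak{M}_i$ is not a pencil, $M(\mathfrak{M}_i)$ is a $K(\pi,1)$-space (its universal cover is contractible, a standard fact for complexified arrangements via the Salvetti/fibration structure), so $\pi_1(M(\mathfrak{M}_i))$ determines the homotopy type of $M(\mathfrak{M}_i)$ and in particular its cohomology ring $\HH^*(M(\mathfrak{M}_i);\ZZ)$. By Orlik--Solomon~\cite{OS}, this cohomology ring recovers the intersection lattice, hence the combinatorics $\mathfrak{C}$; so any isomorphism $\phi:\pi_1(M(\mathfrak{M}_i))\to\pi_1(M(\mathfrak{M}_j))$ induces an automorphism of $\mathfrak{C}$, which by Lemma~\ref{lem:trivial_automorphism} is the identity. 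The key step, following~\cite{ACGM}, is then to show that $\phi$ can be taken to carry meridians to meridians (up to inversion): because $\Aut(\mathfrak{C})$ is trivial the combinatorial data leaves no ambiguity, and one shows the induced map on $\HH_1\simeq\langle m_L\rangle/(\sum m_L)$ sends each meridian class $m_L$ to $\pm m_{L}$ globally, with a single uniform sign dictated by whether $\phi$ respects or reverses orientation.

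The heart of the proof is that such a meridian-preserving isomorphism of $K(\pi,1)$ fundamental groups is realized by a homotopy equivalence $M(\mathfrak{M}_i)\simeq M(\mathfrak{M}_j)$ that respects the peripheral structure, and hence restricts (via the boundary manifold, using Waldhausen's rigidity~\cite{Wal:Irreducible_3-manifolds,Wal:Klasse_Mannigfaltigkeiten} exactly as in the proof of Lemma~\ref{lem:meridians_preserving}) to a map compatible with the coherent embeddings of $\Gamma(\mathfrak{M}_i)$ and $\Gamma(\mathfrak{M}_j)$. This gives precisely the commutative diagram of first-homology maps appearing in the proof of Theorem~\ref{thm:invariance_complement}, so the loop linking number transforms under $\phi$ exactly as under a homeomorphism: $\L(\mathfrak{M}_i,\mu(\Lambda_0))=\L(\mathfrak{M}_j,\mu(\Lambda_0))^{\pm1}$. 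Now I invoke the computation: by Theorem~\ref{thm:ooZP} and the compatibility $\L(\mathfrak{M}_k,\mu(\Lambda_0))=\L(\M_k,\Lambda_0)$ from the proof of Theorem~\ref{thm:ZP}, together with Proposition~\ref{prop:Galois_invariance} giving $\L(\M_k,\Lambda_0)=k\cdot\L(\M_1,\Lambda_0)=2k$ in $\ZZ/5\ZZ$, the condition $i+j\not\equiv0\bmod 5$ means $2i\not\equiv-2j$, i.e. $\L(\mathfrak{M}_i,\mu(\Lambda_0))\neq\L(\mathfrak{M}_j,\mu(\Lambda_0))^{\pm1}$, a contradiction.

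The main obstacle, and the step on which I would spend the most care, is the realization argument: converting the abstract isomorphism $\phi$ into a map that genuinely respects the peripheral/meridian structure of the complements. This is where~\cite{ACGM} does the real work, and it relies crucially on two facts special to this example — that $\Aut(\mathfrak{C})$ is trivial (Lemma~\ref{lem:trivial_automorphism}), which removes any combinatorial ambiguity in how $\phi$ could permute the meridians, and that the arrangements are combinatorially stable, so the boundary-manifold rigidity of Lemma~\ref{lem:meridians_preserving} transfers. I would state this realization as the key lemma, cite~\cite{ACGM} for the detailed peripheral-structure recovery, and then the remaining deduction via the loop linking number is routine given Theorems~\ref{thm:invariance_complement} and~\ref{thm:ooZP}.
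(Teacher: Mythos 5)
Your proposal has a genuine gap, and it sits exactly where you located the ``heart of the proof'': the realization step cannot work, even in principle. An isomorphism $\pi_1(M(\mathfrak{M}_i))\to\pi_1(M(\mathfrak{M}_j))$ gives at best a homotopy equivalence of complements (and even that needs asphericity, see below), but the loop linking number is \emph{not} a homotopy invariant --- this is precisely the content of Theorem~\ref{thm:homotopy_vs_homeomorphism} and Corollary~\ref{cor:not_homotopy_determined} in this same paper, where lattice-isomorphic arrangements with homotopy-equivalent complements and distinct (full) loop linking numbers are exhibited. The technical reason your argument breaks is that Waldhausen's rigidity is applied in Lemma~\ref{lem:meridians_preserving} only after a \emph{homeomorphism} of $M(\A)$ has been shown to preserve geometric filtrations (Lemma~\ref{lem:filtration}), which is what produces an actual map $B(\A_1)\to B(\A_2)$ to which~\cite{Wal:Irreducible_3-manifolds,Wal:Klasse_Mannigfaltigkeiten} apply. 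A homotopy equivalence of the open $4$-manifolds does not preserve geometric filtrations and does not restrict to, or induce, any map between the embedded boundary manifolds, so the commutative diagram from the proof of Theorem~\ref{thm:invariance_complement} is simply not available. Two of your supporting claims are also false: complements of line arrangements are not in general $K(\pi,1)$ spaces (asphericity holds only for special classes, e.g.\ fiber-type arrangements, and is not known or claimed for $\mathfrak{M}_i$), and the cohomology ring does \emph{not} recover the intersection lattice --- Orlik--Solomon~\cite{OS} gives the implication in the opposite direction only, and Falk's examples of homotopy-equivalent complements with non-isomorphic lattices show the converse fails.

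The paper's actual proof takes a different and purely algebraic route that avoids realization entirely: it does not use the loop linking number at all. Following~\cite{ACGM}, one first verifies that the combinatorics $\mathfrak{C}$ is \emph{homologically rigid} via Marco's criterion (enough triangles, only pencils of point type, strong connectivity), which forces any abstract isomorphism of the fundamental groups to act on $\HH_1$ in an essentially trivial way (this replaces your flawed ``recover the lattice from the ring'' step, and it is a property of this specific combinatorics, not a general fact), and then applies the AI-isomorphism test of~\cite[Section~4.2]{ACGM}, a computable obstruction to the existence of any isomorphism respecting that homological structure. If you want to salvage your write-up, the correct repair is to replace the entire loop-linking/Waldhausen realization argument by this rigidity-plus-AI-test scheme; no argument deriving the conclusion from Theorem~\ref{thm:ZP} through $\pi_1$ alone can succeed, because the obstruction used there lives strictly above the homotopy type.
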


The proof of the previous theorem is exactly the same as the one for Theorem~4.4 in~\cite{ACGM}. Therefore, here we only give a sketch of the proof containing the major steps and arguments.

\begin{proof}[Sketch of the proof]\mbox{}\\
	$\bullet$ \textsc{Step 1.} The combinatorics of $\mathfrak{M}_i$ is homologically rigid (obtained using~\cite[Corollary~3]{Marco}).\\
	\indent\quad 1.1. the combinatorics has enough triangles.\\
	\indent\quad 1.2. the combinatorics contains only pencils of point type.\\
	\indent\quad 1.3. the combinatorics is stronlgy connected.\\
	$\bullet$ \textsc{Step 2.} Apply the AI-isomorphism test to obtain an obstruction on the existence of an isomorphism respecting the homological structure of the fundamental groups (see~\cite[Section~4.2]{ACGM} for more details about this test).
\end{proof}

\begin{rmk}
	To our knowledge, only one other example of arithmetic Zariski pair with non-isomorphic fundamental groups is currently known (see~\cite{ACGM}), even in the more general case of plane curves. Notice that the example provided in the present paper has one fewer line than the example of~\cite{ACGM}.
\end{rmk}

\subsection{Twelve lines Zariski triple with non-isomorphic fundamental groups}\mbox{}

The proofs of this section are similar to those of Section~\ref{sec:oZP} and~\ref{sec:ZP}. So, to avoid unecessary details, we give here only the statements of the results.\\

Let $\xi$ be a fixed root of the 7th cyclotomic polynomial $Z^6+Z^5+Z^4+Z^3+Z^2+Z+1$. We consider the arrangements $\N_i$, for $i\in\{1,\dots,6\}$, defined by the following equations, where $\alpha=\xi^i$.
\begin{equation*}
	\begin{array}{l p{1cm} l}
		L_1: z = 0 && 
		L_2: \alpha x - y +z = 0 \\
		L_3: (\alpha^2+\alpha)x - \alpha y + (\alpha+1) z = 0 &&
		L_4: x = 0 \\
		L_5: x + \alpha y -(\alpha+1) z = 0 &&
		L_6: (\alpha+1)x - y = 0 \\
		L_7: x - z = 0 &&
		L_8: \alpha x + \alpha^2 y -(\alpha^3+\alpha^2+\alpha) z = 0 \\
		L_9: y = 0 &&
		L_{10}: y - z = 0 \\
		L_{11}: (\alpha^2+1) y -(\alpha^2+\alpha+1) z = 0 && 
	\end{array}
\end{equation*}
The combinatorics shared by the $\N_i$'s is:
\begin{align*}
	D = \big\{
	& \{1, 2\}, \{1, 3, 6\}, \{1, 4, 7\}, \{1, 5, 8\}, \{1, 9, 10, 11\}, \{2, 3, 9\}, \{2, 4, 10\}, \{2, 5, 11\}, \{2, 6, 7, 8\}, \\
	& \{3, 4, 5\}, \{3, 7\}, \{3, 8, 11\}, \{3, 10\}, \{4, 6, 9\}, \{4, 8\}, \{4, 11\}, \{5, 6\}, \{5, 7, 10\}, \{5, 9\}, \{6, 10\}, \\
	& \{6, 11\}, \{7, 9\}, \{7, 11\}, \{8, 9\}, \{8, 10\}
	\big\}.
\end{align*}

\begin{rmk}
	The combinatorics $C$ of Section~\ref{sec:oZP} is not a sub-combinatorics of $D$.
\end{rmk}

\begin{lemma}
	The automorphism group of $D$ is the subgroup of the symetric group $\Sigma_{11}$ generated by the permutations:
	\begin{equation*}
		\sigma_1 = (3,4,5)(6,7,8)(9,10,11) \quad\text{and}\quad \sigma_2 = (1,2)(6,9)(7,10)(8,11).
	\end{equation*}
	Their actions on the lines of $D$ is given by $\sigma_i(L_j)=L_{\sigma_i(j)}$.
\end{lemma}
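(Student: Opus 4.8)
The plan is to determine $\Aut(D)$ by first constraining how any automorphism $\tau$ must permute the lines, using combinatorial invariants attached to each line, and then verifying that the subgroup generated by $\sigma_1$ and $\sigma_2$ exhausts all the possibilities. The first step is to compute, for each line $L_j$, the multiset of multiplicities of the singular points lying on it; since an automorphism of the combinatorics preserves incidences and multiplicities, $\tau$ must send each line to a line with the same local profile. Reading off $D$, I would record for every line how many double, triple, and quadruple points it carries. For instance $L_1$ and $L_2$ each appear to carry a quadruple point (from $\{1,9,10,11\}$ and $\{2,6,7,8\}$ respectively), which already suggests $\tau$ preserves the set $\{L_1,L_2\}$; similarly one groups $\{L_3,L_4,L_5\}$, $\{L_6,L_7,L_8\}$, and $\{L_9,L_{10},L_{11}\}$ according to their profiles. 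This partition into orbit-candidates is the structural backbone of the argument.

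Once the coarse block structure is fixed, I would argue that $\Aut(D)$ must be a subgroup of the group $\Sigma$ of permutations respecting this block decomposition, and then show that $\langle\sigma_1,\sigma_2\rangle$ is precisely the stabilizer inside $\Sigma$ of the full incidence structure. The cleanest route is a case analysis on the image of a single well-chosen line: fix the block containing the quadruple points, say $\{L_1,L_2\}$, and split into the two cases $\tau(L_1)=L_1$ and $\tau(L_1)=L_2$. In each case, the images of the remaining lines get forced one by one by tracking where specific singular points must go — e.g. the triple point $\{1,3,6\}$ on $L_1$ must map to a triple point on $\tau(L_1)$ containing the images of $L_3$ and $L_6$, which pins down $\tau$ on those lines. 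Propagating these constraints through a handful of points should leave only the permutations in $\langle\sigma_1,\sigma_2\rangle$, exactly as in the proof of the earlier lemma computing $\Aut(C)$.

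Finally, I would confirm that $\sigma_1$ and $\sigma_2$ genuinely are automorphisms by checking that each sends every point of $D$ to another point of $D$ (this is a routine but necessary verification, and it also identifies the abstract group: since $\sigma_1$ has order $3$, $\sigma_2$ has order $2$, and one checks the relation between them, the group is likely $\Sigma_3$ of order $6$). The step I expect to be the main obstacle is the forcing argument in the case analysis: the block $\{L_3,L_4,L_5\}$ together with $\{L_6,L_7,L_8\}$ and $\{L_9,L_{10},L_{11}\}$ admits many a priori permutations, and one must be careful to use enough distinct singular points — and in the right order — to rule out the spurious ones without circular reasoning. In particular, distinguishing the two generators' roles requires identifying which points break the symmetry between, say, $L_1$ and $L_2$ versus which points are compatible with swapping them, so I would select the guiding points deliberately so that each case closes after only a few deductions.
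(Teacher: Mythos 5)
Your overall strategy --- classify lines by the multiplicities of the singular points they carry, then run a case analysis on the image of $L_1$ --- is exactly the intended one (the paper in fact omits this proof, stating only that it is similar to the computation of $\Aut(C)$ in Section~4.1), but your ``structural backbone'' contains a genuine error. Reading off $D$, the six lines $L_6,\dots,L_{11}$ all have the \emph{identical} profile: one quadruple point, two triple points and three double points. So profiles cannot separate $\{L_6,L_7,L_8\}$ from $\{L_9,L_{10},L_{11}\}$ --- and they must not, since $\sigma_2$ itself sends $L_6\mapsto L_9$, $L_7\mapsto L_{10}$, $L_8\mapsto L_{11}$; if each of these two blocks were invariant under every automorphism, you would conclude $\sigma_2\notin\Aut(D)$, contradicting the very lemma you are proving. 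The correct refinement is that the only two quadruple points are $\{1,9,10,11\}$ and $\{2,6,7,8\}$, so any $\tau$ preserves the unordered pair of blocks $\{L_9,L_{10},L_{11}\}$, $\{L_6,L_7,L_8\}$ and fixes or swaps them exactly according to whether $\tau(L_1)=L_1$ or $\tau(L_1)=L_2$. With this correction your forcing argument does close: if $\tau(L_1)=L_1$, then $\{3,4,5\}$, $\{6,7,8\}$, $\{9,10,11\}$ are each invariant; the triple points $\{1,3,6\},\{1,4,7\},\{1,5,8\}$ (resp.\ $\{2,3,9\},\{2,4,10\},\{2,5,11\}$) show that $\tau$ restricted to $\{6,7,8\}$ (resp.\ $\{9,10,11\}$) is the conjugate, under the pairing $3\leftrightarrow 6$, $4\leftrightarrow 7$, $5\leftrightarrow 8$ (resp.\ $3\leftrightarrow 9$, $4\leftrightarrow 10$, $5\leftrightarrow 11$), of its restriction $\rho$ to $\{3,4,5\}$; the points $\{4,6,9\},\{5,7,10\},\{3,8,11\}$ then force $\rho$ to be a power of $(3,4,5)$, i.e.\ $\tau\in\langle\sigma_1\rangle$. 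If $\tau(L_1)=L_2$, compose with $\sigma_2$ to reduce to the previous case.

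A second, smaller error: the group is not $\Sigma_3$. One checks $\sigma_2\sigma_1\sigma_2^{-1}=\sigma_1$ (e.g.\ $6\stackrel{\sigma_2}{\mapsto}9\stackrel{\sigma_1}{\mapsto}10\stackrel{\sigma_2}{\mapsto}7=\sigma_1(6)$), so the generators commute and $\langle\sigma_1,\sigma_2\rangle\simeq\ZZ/3\ZZ\times\ZZ/2\ZZ\simeq\ZZ/6\ZZ$ is cyclic of order $6$. This does not affect the lemma as stated, which only identifies $\Aut(D)$ as $\langle\sigma_1,\sigma_2\rangle$, but the relation check you deferred would have caught it. Your final point --- that one must explicitly verify $\sigma_1$ and $\sigma_2$ map every point of $D$ to a point of $D$ --- is correct and indeed necessary; both verifications are routine and succeed.
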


\begin{theorem}
	The tensor linking group $\TLG(D,\ZZ/7\ZZ)$ is isomorphic to $\ZZ/7\ZZ$. Furthermore, if $\Lambda_0$ is a non-trivial element of $\TLG(D,\ZZ/7\ZZ)$, then 
	\begin{equation*}
		\L(\N_i,\Lambda_0) \neq 0.
	\end{equation*}
\end{theorem}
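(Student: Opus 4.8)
The plan is to mirror the structure of the analogous result for the arrangements $\M_i$ (Theorem~\ref{thm:ooZP}), since the final statement asserts two things that should be handled separately: first, that $\TLG(D,\ZZ/7\ZZ)\simeq\ZZ/7\ZZ$; and second, that the single nontrivial generator (up to scalar) has nonzero loop linking number on each $\N_i$. For the first part I would follow the computational recipe spelled out in the proof of Lemma~\ref{lem:TLG}: the tensor linking group $\TLG(D,\ZZ/7\ZZ)$ is realized as the right kernel of the $\ZZ/7\ZZ$-matrix whose rows are the linear forms coming from the boundary condition~(\ref{eq:boundary}) together with Conditions~(I) and~(II) in the definition of $\TLG$. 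One sets up this matrix directly from the combinatorics $D$ and computes its kernel over the field $\ZZ/7\ZZ$; the assertion is that the kernel is one-dimensional, so it is generated by a single tensor $\Lambda_0$. This is a finite linear-algebra computation over a field, and its only subtlety is bookkeeping: the chain $\Lambda=\sum_{P}\sum_{L\ni P}\lambda_\PtoL\otimes\PtoL$ ranges over all incident pairs in $D$, so the matrix is sizable, but conceptually routine.

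For the second part, the key reduction is Galois invariance. Since the $\N_i$ are Galois-conjugate arrangements with coefficients in the $7$th cyclotomic field and $\alpha=\xi^i$, Proposition~\ref{prop:Galois_invariance} gives that
\begin{equation*}
	\L(\N_i,\Lambda_0) = \sigma_i\cdot\L(\N_1,\Lambda_0)
\end{equation*}
for the appropriate element $\sigma_i\in\Gal(\QQ(\xi)/\QQ)$ acting on $\ZZ/7\ZZ$. Because the Galois action on $\ZZ/7\ZZ$ is by multiplication by a nonzero unit, $\L(\N_i,\Lambda_0)$ is nonzero if and only if $\L(\N_1,\Lambda_0)$ is nonzero. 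So it suffices to compute a single value $\L(\N_1,\Lambda_0)$ and check that it is nonzero in $\ZZ/7\ZZ$.

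To carry out that single computation I would invoke Theorem~\ref{thm:computation}, namely $\L(\N_1,\Lambda_0)=\sum_{P\in\Sing(\N_1)}\sum_{L\ni P}\lambda_\PtoL(\ulk_L(B_\PtoL))$, and evaluate the upper-linkings $\ulk_L(B_\PtoL)$ using one of the two effective methods of Section~\ref{sec:computation}. Following the treatment of $\M_1$, the cleanest route is the braided wiring diagram of Section~\ref{sec:computation_wiring}: fix the base point $P_0$ and generic line $F_0$ near a high-multiplicity flag to keep the braids small, extract each $B_\PtoL$ as the relevant sub-braid of $b_1 T_1\cdots T_{i-1} b_i$, read off its upper-linking, and pair against the values of $\Lambda_0$. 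Plugging these into the formula yields an explicit element of $\ZZ/7\ZZ$, which one verifies is nonzero.

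The main obstacle is not conceptual but computational: establishing that the kernel defining $\TLG(D,\ZZ/7\ZZ)$ is exactly one-dimensional, and then producing a genuinely error-free wiring diagram for $\N_1$ from which the upper-linkings are correctly read. Both steps are the natural analogues of the $\M_i$ computation, so I expect the argument to go through identically; the risk lies entirely in the arithmetic bookkeeping of the crossings and the evaluation of $\Lambda_0$ on the resulting meridian classes.
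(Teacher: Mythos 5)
Your proposal coincides with the paper's (deliberately omitted) proof: the paper states the arguments are the same as in Section~\ref{sec:oZP}, namely computing $\TLG(D,\ZZ/7\ZZ)$ as the right kernel of the combinatorial matrix over $\ZZ/7\ZZ$ as in Lemma~\ref{lem:TLG}, reducing the nonvanishing to a single arrangement via Proposition~\ref{prop:Galois_invariance}, and evaluating via the wiring-diagram formula of Theorem~\ref{thm:computation} using the diagrams of Figures~\ref{fig:WD_1}--\ref{fig:WD_3} with the strand of $L_{12}$ removed. The only point to make explicit, which your ``up to scalar'' remark already covers, is that linearity of $\L$ in $\Lambda$ over the field $\ZZ/7\ZZ$ extends the nonvanishing from the generator to every nontrivial element.
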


The previous theorem is obtained using the braided wiring diagrams given in Figures~\ref{fig:WD_1},~\ref{fig:WD_2} and~\ref{fig:WD_3}, by removing the strand associated to $L_{12}$. 

\begin{corollary}
	If $i\neq j \in\{1,\dots,6\}$, then there does not exist an ordered and oriented homeomorphism from $M(\N_i)$ to $M(\N_j)$.
\end{corollary}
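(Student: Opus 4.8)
The plan is to reduce the Zariski triple statement for the twelve-line arrangements $\N_i$ to the computation of a single loop linking number, exactly mirroring the strategy already carried out for the ten-line family $\M_i$ in Section~\ref{sec:oZP}. By Proposition~\ref{prop:Galois_invariance}, the loop linking numbers of the six arrangements $\N_1,\dots,\N_6$ are Galois conjugate: the action of $\Gal(\QQ(\xi)/\QQ)\cong(\ZZ/7\ZZ)^\times$ permutes them according to $\alpha=\xi^i\mapsto\xi^{ik}$, while simultaneously acting on the target group $\ZZ/7\ZZ$ by multiplication. Since $\TLG(D,\ZZ/7\ZZ)\cong\ZZ/7\ZZ$ is generated by a single nontrivial $\Lambda_0$ (by the preceding theorem), it suffices to compute $\L(\N_1,\Lambda_0)$ and then propagate the value to all $\N_i$ by Galois conjugation.

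First I would fix the generic projection and the braided wiring diagrams for $\N_i$, obtained from the diagrams of Figures~\ref{fig:WD_1},~\ref{fig:WD_2},~\ref{fig:WD_3} by deleting the strand of the auxiliary line $L_{12}$. Then, for the base arrangement $\N_1$, I would apply the wiring-diagram method of Section~\ref{sec:computation_wiring}: for each edge $\PtoL$ of $\Gamma(\N_1)$, form the braid $\overline{B}_\PtoL=b_1\cdot T_1\cdots T_{i-1}\cdot b_i$, delete the strands of $\A_{P_i}\setminus\{L\}$ to obtain $B_\PtoL$, and read off the upper-linking $\ulk_L(B_\PtoL)$ from the signed crossings. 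Feeding these into the formula of Theorem~\ref{thm:computation} against the coefficients $\lambda_\PtoL$ of $\Lambda_0$ yields $\L(\N_1,\Lambda_0)\in\ZZ/7\ZZ$, and the claim is that this value is nonzero. The conclusion $\L(\N_i,\Lambda_0)\neq 0$ for all $i$ is then immediate, since Galois conjugation by an element of $(\ZZ/7\ZZ)^\times$ sends nonzero elements of $\ZZ/7\ZZ$ to nonzero elements.

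Once the theorem is established, the corollary follows directly. If there existed an ordered and oriented homeomorphism $h:M(\N_i)\to M(\N_j)$ with $i\neq j$, then Theorem~\ref{thm:invariance_complement} would give $\L(\N_i,\Lambda_0)=\L(\N_j,\tilde{h}(\Lambda_0))$. Since $\TLG(D,\ZZ/7\ZZ)$ is cyclic of order $7$ and the ordered isomorphism fixes $\Lambda_0$ up to a unit that is tracked by the Galois action, comparing the two (distinct, Galois-conjugate) nonzero values forces $i=j$, a contradiction.

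The main obstacle I anticipate is purely computational rather than conceptual: correctly assembling the braids $\overline{B}_\PtoL$ from the wiring diagram of $\N_1$ for all the edges of $\Gamma(D)$ and accurately tracking the signs of both the actual and virtual crossings when computing each $\ulk_L(B_\PtoL)$. The bipartite graph $\Gamma(D)$ has many edges (one per line through each of the $25$ singular points), so the bookkeeping is substantial and error-prone; a sign mistake in a single half-twist $T_j$ or in the deletion of strands could corrupt the final residue in $\ZZ/7\ZZ$. Because the result hinges on distinguishing a nonzero class from $0$ in $\ZZ/7\ZZ$, the computation must be carried out with care, ideally cross-checked by the alternative change-of-variables method of Section~\ref{sec:computation_CoV} or by verifying the Galois-equivariance of the six computed values against Proposition~\ref{prop:Galois_invariance}.
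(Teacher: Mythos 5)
Your proposal follows essentially the same route as the paper: compute $\L(\N_1,\Lambda_0)$ via the braided wiring diagrams of Figures~\ref{fig:WD_1}--\ref{fig:WD_3} with the $L_{12}$-strand removed, propagate to all $\N_i$ by the Galois equivariance of Proposition~\ref{prop:Galois_invariance}, and conclude via Theorem~\ref{thm:invariance_complement} using that the six Galois-conjugate values $i\cdot\L(\N_1,\Lambda_0)$ are pairwise distinct nonzero elements of $\ZZ/7\ZZ$. One small point of precision: since an ordered homeomorphism induces the identity on the combinatorially determined $\TLG(D,\ZZ/7\ZZ)$, you have $\tilde{h}(\Lambda_0)=\Lambda_0$ exactly, so your hedge ``up to a unit'' is unnecessary and, as stated, would weaken the contradiction.
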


As in Section~\ref{sec:ZP}, to remove the "ordered and oriented" condition in the previous corollary, we add to $\N_i$ a 12th line $L_{12}^i$ passing through $L_1 \cap L_9 \cap L_{10} \cap L_{11}$ and $L_3 \cap L_7$. We denote by $\mathfrak{N}_i$ the arrangement $\N_i \cup\{L_{12}^i\}$. Due to this twelveth line, the group of automorphism of the combinatorics $\mathfrak{D}$ shared by the $\mathfrak{N}_i$'s is trivial. Then, using Remark~\ref{rmk:trivial_automorphism}, we deduce the following theorem. We give in Figures~\ref{fig:WD_1},~\ref{fig:WD_2} and~\ref{fig:WD_3} a non-generic braided wiring diagram of $\mathfrak{N}_1$, $\mathfrak{N}_2$ and $\mathfrak{N}_3$ respectively, where $L_1$ is considered as the line at infinity and $\xi \simeq 0.62 + 0.78i$.

\begin{figure}[h!]
	\begin{tikzpicture}
	\begin{scope}[xscale=0.25,yscale=-0.4]

		\node at (-1,1) {$L_{9}$};
		\node at (-1,2) {$L_{12}$};
		\node at (-1,3) {$L_{11}$};
		\node at (-1,4) {$L_{10}$};
		\node at (-1,5) {$L_{2}$};
		\node at (-1,6) {$L_{4}$};
		\node at (-1,7) {$L_{7}$};
		\node at (-1,8) {$L_{5}$};
		\node at (-1,9) {$L_{6}$};
		\node at (-1,10) {$L_{8}$};
		\node at (-1,11) {$L_{3}$};
		\nocrossing{11}{0}
		\actualcrossing{11}{1}{4}{3}
		\actualcrossing{11}{3}{6}{3}
		\actualcrossing{11}{5}{8}{2}
		\actualcrossing{11}{6}{9}{2}
		\actualcrossing{11}{7}{10}{2}
		\undercrossing{11}{8}{5}
		\undercrossing{11}{9}{8}
		\actualcrossing{11}{10}{6}{4}
		\undercrossing{11}{13}{8}
		\undercrossing{11}{14}{7}
		\undercrossing{11}{15}{8}
		\undercrossing{11}{16}{6}
		\overcrossing{11}{17}{5}
		\actualcrossing{11}{18}{3}{2}
		\actualcrossing{11}{19}{4}{3}
		\actualcrossing{11}{21}{6}{2}
		\actualcrossing{11}{22}{7}{2}
		\actualcrossing{11}{23}{8}{3}
		\overcrossing{11}{25}{5}
		\actualcrossing{11}{26}{2}{2}
		\actualcrossing{11}{27}{3}{2}
		\actualcrossing{11}{28}{4}{2}
		\actualcrossing{11}{29}{5}{2}
		\actualcrossing{11}{30}{6}{3}
		\actualcrossing{11}{32}{8}{2}
		\overcrossing{11}{33}{7}
		\undercrossing{11}{34}{2}
		\actualcrossing{11}{35}{1}{2}
		\actualcrossing{11}{36}{2}{3}
		\actualcrossing{11}{38}{4}{3}
		\actualcrossing{11}{40}{6}{2}
		\actualcrossing{11}{41}{7}{2}
		\overcrossing{11}{42}{5}
		\actualcrossing{11}{43}{1}{2}
		\undercrossing{11}{44}{5}
		\overcrossing{11}{45}{2}
		\actualcrossing{11}{46}{2}{3}
		\overcrossing{11}{48}{2}
		\overcrossing{11}{49}{5}
		\overcrossing{11}{50}{6}
		\actualcrossing{11}{51}{4}{2}
		\nocrossing{11}{52}

	\end{scope}
\end{tikzpicture}
	\vspace{-0.5cm}
	\caption{Braided wiring diagram of $\mathfrak{N}_1$.\label{fig:WD_1}}
\end{figure}
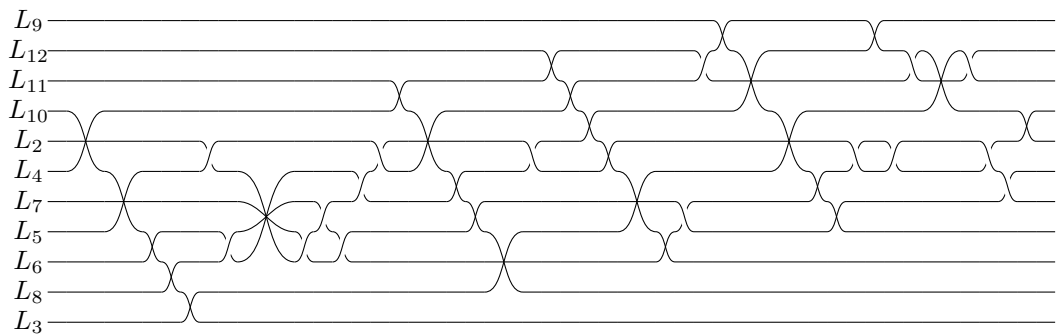

\begin{figure}[h!]
	\begin{tikzpicture}
	\begin{scope}[xscale=0.25,yscale=-0.4]

		\node at (-1,1) {$L_{9}$};
		\node at (-1,2) {$L_{10}$};
		\node at (-1,3) {$L_{11}$};
		\node at (-1,4) {$L_{12}$};
		\node at (-1,5) {$L_{6}$};
		\node at (-1,6) {$L_{2}$};
		\node at (-1,7) {$L_{7}$};
		\node at (-1,8) {$L_{3}$};
		\node at (-1,9) {$L_{4}$};
		\node at (-1,10) {$L_{5}$};
		\node at (-1,11) {$L_{8}$};
		\nocrossing{11}{0}
		\actualcrossing{11}{1}{8}{3}
		\undercrossing{11}{3}{9}
		\overcrossing{11}{4}{5}
		\overcrossing{11}{5}{7}
		\overcrossing{11}{6}{10}
		\overcrossing{11}{7}{8}
		\actualcrossing{11}{8}{4}{2}
		\actualcrossing{11}{9}{5}{2}
		\actualcrossing{11}{10}{6}{2}
		\actualcrossing{11}{11}{7}{3}
		\actualcrossing{11}{13}{9}{2}
		\actualcrossing{11}{14}{10}{2}
		\undercrossing{11}{15}{9}
		\actualcrossing{11}{16}{9}{2}
		\overcrossing{11}{17}{9}
		\overcrossing{11}{18}{8}
		\undercrossing{11}{19}{4}
		\actualcrossing{11}{20}{3}{2}
		\actualcrossing{11}{21}{4}{3}
		\actualcrossing{11}{23}{6}{2}
		\actualcrossing{11}{24}{7}{2}
		\actualcrossing{11}{25}{8}{3}
		\undercrossing{11}{27}{5}
		\actualcrossing{11}{28}{3}{2}
		\undercrossing{11}{29}{4}
		\undercrossing{11}{30}{3}
		\actualcrossing{11}{31}{2}{3}
		\actualcrossing{11}{33}{4}{2}
		\actualcrossing{11}{34}{5}{3}
		\actualcrossing{11}{36}{7}{2}
		\actualcrossing{11}{37}{8}{2}
		\overcrossing{11}{38}{2}
		\undercrossing{11}{39}{6}
		\undercrossing{11}{40}{7}
		\actualcrossing{11}{41}{1}{2}
		\actualcrossing{11}{42}{2}{2}
		\actualcrossing{11}{43}{3}{3}
		\actualcrossing{11}{45}{5}{2}
		\actualcrossing{11}{46}{6}{3}
		\undercrossing{11}{48}{3}
		\undercrossing{11}{49}{1}
		\undercrossing{11}{50}{4}
		\undercrossing{11}{51}{5}
		\overcrossing{11}{52}{4}
		\overcrossing{11}{53}{2}
		\overcrossing{11}{54}{3}
		\actualcrossing{11}{55}{2}{4}
		\nocrossing{11}{58}

	\end{scope}
\end{tikzpicture}
	\vspace{-0.5cm}
	\caption{Braided wiring diagram of $\mathfrak{N}_2$.\label{fig:WD_2}}
\end{figure}

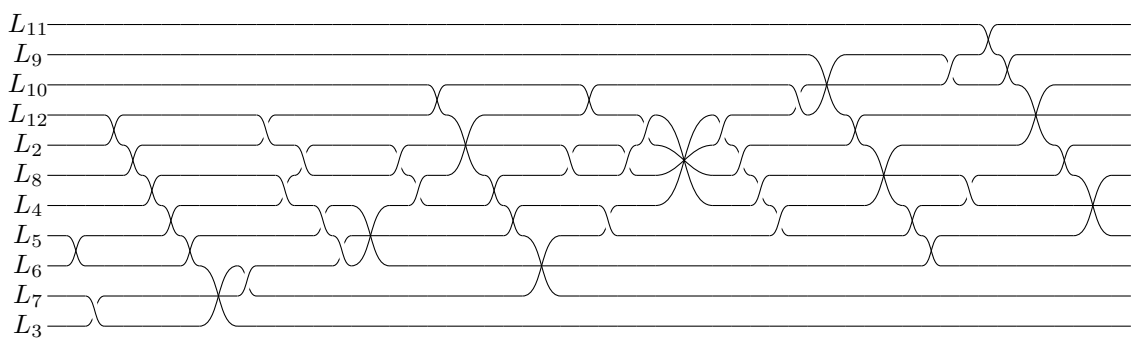
\begin{figure}[h!]
	\begin{tikzpicture}
	\begin{scope}[xscale=0.25,yscale=-0.4]

		\node at (-1,1) {$L_{11}$};
		\node at (-1,2) {$L_{9}$};
		\node at (-1,3) {$L_{10}$};
		\node at (-1,4) {$L_{12}$};
		\node at (-1,5) {$L_{2}$};
		\node at (-1,6) {$L_{8}$};
		\node at (-1,7) {$L_{4}$};
		\node at (-1,8) {$L_{5}$};
		\node at (-1,9) {$L_{6}$};
		\node at (-1,10) {$L_{7}$};
		\node at (-1,11) {$L_{3}$};
		\nocrossing{11}{0}
		\actualcrossing{11}{1}{8}{2}
		\overcrossing{11}{2}{10}
		\actualcrossing{11}{3}{4}{2}
		\actualcrossing{11}{4}{5}{2}
		\actualcrossing{11}{5}{6}{2}
		\actualcrossing{11}{6}{7}{2}
		\actualcrossing{11}{7}{8}{2}
		\actualcrossing{11}{8}{9}{3}
		\undercrossing{11}{10}{9}
		\overcrossing{11}{11}{4}
		\overcrossing{11}{12}{6}
		\overcrossing{11}{13}{5}
		\overcrossing{11}{14}{7}
		\overcrossing{11}{15}{8}
		\actualcrossing{11}{16}{7}{3}
		\undercrossing{11}{18}{5}
		\undercrossing{11}{19}{6}
		\actualcrossing{11}{20}{3}{2}
		\actualcrossing{11}{21}{4}{3}
		\actualcrossing{11}{23}{6}{2}
		\actualcrossing{11}{24}{7}{2}
		\actualcrossing{11}{25}{8}{3}
		\overcrossing{11}{27}{5}
		\actualcrossing{11}{28}{3}{2}
		\overcrossing{11}{29}{7}
		\undercrossing{11}{30}{5}
		\undercrossing{11}{31}{4}
		\actualcrossing{11}{32}{4}{4}
		\undercrossing{11}{35}{4}
		\undercrossing{11}{36}{5}
		\undercrossing{11}{37}{6}
		\undercrossing{11}{38}{7}
		\overcrossing{11}{39}{3}
		\actualcrossing{11}{40}{2}{3}
		\actualcrossing{11}{42}{4}{2}
		\actualcrossing{11}{43}{5}{3}
		\actualcrossing{11}{45}{7}{2}
		\actualcrossing{11}{46}{8}{2}
		\undercrossing{11}{47}{2}
		\overcrossing{11}{48}{6}
		\actualcrossing{11}{49}{1}{2}
		\actualcrossing{11}{50}{2}{2}
		\actualcrossing{11}{51}{3}{3}
		\actualcrossing{11}{53}{5}{2}
		\actualcrossing{11}{54}{6}{3}
		\nocrossing{11}{56}

	\end{scope}
\end{tikzpicture}
	\vspace{-0.5cm}
	\caption{Braided wiring diagram of $\mathfrak{N}_3$.\label{fig:WD_3}}
\end{figure}

\begin{theorem}
	For $i \neq j \in \{1,\dots,6\}$, if $i+j \not\equiv 0 \mod 7$ then there does not exist a homeomorphism from $M(\mathfrak{N}_i)$ to $M(\mathfrak{N}_j)$.
\end{theorem}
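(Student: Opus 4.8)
The plan is to run the argument of Theorem~\ref{thm:ZP} verbatim, with $(C,\mathfrak{C},\ZZ/5\ZZ)$ replaced by $(D,\mathfrak{D},\ZZ/7\ZZ)$, exploiting that $\Aut(\mathfrak{D})$ is trivial. First I would record the multiplicativity input exactly as in the eleven-line case: the inclusion $\N_i\subset\mathfrak{N}_i$ should furnish a natural injection $\mu:\TLG(D,\ZZ/7\ZZ)\hookrightarrow\TLG(\mathfrak{D},\ZZ/7\ZZ)$, a consequence of Theorem~\ref{thm:multiplicativity}, for which $\L(\mathfrak{N}_k,\mu(\Lambda_0))=\L(\N_k,\Lambda_0)$ for every $k$. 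Concretely, the twelfth line $L_{12}^i$ only creates dense points that do not meet the support of $\mu(\Lambda_0)$, so the formula of Theorem~\ref{thm:computation} returns the same sum on both levels. Since $\Aut(\mathfrak{D})$ is trivial, Remark~\ref{rmk:trivial_automorphism} then guarantees that $\{\L(\mathfrak{N}_i,\mu(\Lambda_0))^{\pm1}\}$ is a genuine topological invariant of $M(\mathfrak{N}_i)$.

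Next I would pin down the actual values. Write $v_k=\L(\N_k,\Lambda_0)\in\ZZ/7\ZZ$, which is nonzero by the previous theorem. The Galois group $\Gal(\QQ(\xi)/\QQ)\cong(\ZZ/7\ZZ)^\times$ acts by $\sigma_t:\xi\mapsto\xi^t$, and since $\alpha=\xi^i$ this sends $\N_i$ to $\N_{ti}$ (indices read modulo $7$). Applying Proposition~\ref{prop:Galois_invariance}, in which $\sigma_t$ acts on the target $\ZZ/7\ZZ$ by multiplication by $t$, yields $v_{ti}=t\,v_i$; taking $i=1$ gives $v_k=k\,v_1$ for all $k\in\{1,\dots,6\}$, with $v_1$ a unit modulo $7$. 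By the value-preservation of $\mu$ the same holds at the twelve-line level, so $\L(\mathfrak{N}_k,\mu(\Lambda_0))=k\,v_1$.

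Finally I would conclude by comparing the invariants. A homeomorphism $M(\mathfrak{N}_i)\to M(\mathfrak{N}_j)$ would force $\{i\,v_1,-i\,v_1\}=\{j\,v_1,-j\,v_1\}$ in $\ZZ/7\ZZ$; since $v_1$ is invertible this reads $i\equiv\pm j\pmod 7$. As $i\neq j$ in $\{1,\dots,6\}$ the possibility $i\equiv j$ is excluded, leaving $i+j\equiv 0\pmod 7$. Contrapositively, whenever $i+j\not\equiv 0\pmod 7$ no such homeomorphism can exist, which is the claim.

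The step I expect to be the main obstacle is the first one: justifying that $\mu$ is well defined and value-preserving, i.e.\ that $\mu(\Lambda_0)$ still satisfies Conditions~(I)--(II) defining $\TLG(\mathfrak{D},\ZZ/7\ZZ)$ and that the two extra dense points contribute nothing to $\L(\mathfrak{N}_k,\mu(\Lambda_0))$. This is precisely where Theorem~\ref{thm:multiplicativity} carries the weight. Once that is in place, the remaining work is routine (though error-prone): reading $v_1$ off the braided wiring diagrams of Figures~\ref{fig:WD_1}--\ref{fig:WD_3} via the upper-linkings of Theorem~\ref{thm:computation}, and tracking the harmless $\pm$ ambiguity coming from orientation.
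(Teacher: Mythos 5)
Your proposal is correct and coincides with the paper's own (deliberately sketched) proof, which states that the results of this section are obtained by repeating the arguments of Sections~\ref{sec:oZP} and~\ref{sec:ZP} verbatim with $(D,\mathfrak{D},\ZZ/7\ZZ)$ in place of $(C,\mathfrak{C},\ZZ/5\ZZ)$: the injection $\mu:\TLG(D,\ZZ/7\ZZ)\hookrightarrow\TLG(\mathfrak{D},\ZZ/7\ZZ)$ furnished by Theorem~\ref{thm:multiplicativity}, the triviality of $\Aut(\mathfrak{D})$ combined with Remark~\ref{rmk:trivial_automorphism}, and Proposition~\ref{prop:Galois_invariance} yielding $\L(\N_{ti},\Lambda_0)=t\cdot\L(\N_i,\Lambda_0)$, exactly as you outline. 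One cosmetic slip: $L_{12}^i$ does not merely create dense points disjoint from the support of $\mu(\Lambda_0)$ --- it passes through the existing quadruple point $P_{1,9,10,11}$, raising its multiplicity to five --- but this is harmless, since value-preservation rests on $\mu(\Lambda_0)$ vanishing on $m_{L_{12}}$ (i.e.\ on Theorem~\ref{thm:multiplicativity}), not on support disjointness.
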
 

In addition, we can use the homological rigidity of $\mathfrak{D}$ and the AI-test as done for Theorem~\ref{thm:grp_fonda}, to prove that the fundamental groups of the $\mathfrak{N}_i$'s are not isomorphic.

\begin{theorem}
	For $i \neq j \in \{1,\dots,6\}$, if $i+j \not\equiv 0 \mod 7$ then 
	\begin{equation*}
		\pi_1\big(M(\mathfrak{N}_i)\big) \not\simeq \pi_1\big(M(\mathfrak{N}_j)\big).
	\end{equation*}
\end{theorem}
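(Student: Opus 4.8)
The plan is to follow the two-step strategy used for Theorem~\ref{thm:grp_fonda}, substituting the combinatorics $\mathfrak{D}$ for $\mathfrak{C}$ and the group $\ZZ/7\ZZ$ for $\ZZ/5\ZZ$. The aim is to rule out an isomorphism of fundamental groups, which is strictly stronger than ruling out a homeomorphism; the bridge between the two notions is the \emph{homological rigidity} of the combinatorics, which forces any group isomorphism to act on the relevant homology as though it were induced by a homeomorphism.

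The first step is to prove that $\mathfrak{D}$ is homologically rigid by verifying the three hypotheses of~\cite[Corollary~3]{Marco}: that $\mathfrak{D}$ has enough triangles, that all its pencils are of point type, and that it is strongly connected. Each of these is a direct combinatorial inspection of the explicit list of singular points defining $\mathfrak{D}$, completely parallel to Step~1 of the proof of Theorem~\ref{thm:grp_fonda}. The twelfth line $L_{12}^i$, whose only role is to trivialize $\Aut(\mathfrak{D})$, does not disturb these verifications.

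The second step is to run the AI-isomorphism test of~\cite[Section~4.2]{ACGM}. Homological rigidity guarantees that any isomorphism $\pi_1(M(\mathfrak{N}_i))\simeq\pi_1(M(\mathfrak{N}_j))$ respects the homological structure carried by the meridians; because $\Aut(\mathfrak{D})$ is trivial, this structure is pinned down to the combinatorially standard one, up to global orientation and complex conjugation. Such an isomorphism would therefore preserve the loop linking number $\L(\cdot,\Lambda_0)$ up to sign and Galois conjugation, exactly as in Theorem~\ref{thm:full_loop_linking}. Since complex conjugation $\xi\mapsto\xi^{-1}$ identifies $\mathfrak{N}_i$ with $\mathfrak{N}_{7-i}$, the loop linking number separates all pairs except those with $i+j\equiv 0\mod 7$; the AI-test thus yields the required obstruction precisely under the hypothesis $i+j\not\equiv 0\mod 7$.

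The main obstacle I anticipate is not a single computation but the bookkeeping inside the AI-test: one must check that homological rigidity genuinely confines every candidate isomorphism to the standard combinatorial form before the loop-linking obstruction can be invoked. The combinatorial verifications of Step~1 are routine, yet each must be carried out with care so that~\cite[Corollary~3]{Marco} applies without gaps.
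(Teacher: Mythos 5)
Your proposal follows exactly the route the paper takes: the paper's proof of this theorem is precisely the two-step argument of Theorem~\ref{thm:grp_fonda} transposed to $\mathfrak{D}$ and $\ZZ/7\ZZ$, namely establishing homological rigidity of $\mathfrak{D}$ via the three hypotheses of~\cite[Corollary~3]{Marco} and then applying the AI-isomorphism test of~\cite[Section~4.2]{ACGM}, with the condition $i+j\not\equiv 0 \mod 7$ arising from complex conjugation just as you describe. Your plan is correct and matches the paper's (sketched) proof in both structure and substance.
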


In other words, the arrangements $\mathfrak{N}_1$, $\mathfrak{N}_2$ and $\mathfrak{N}_3$ form an arithmetic Zariski triple whose complements have non-isomorphic fundamental groups.

\section{Multiplicativity theorem of the loop linking number}
	
In this section, we present a generalization of the multiplicativity theorem obtained in~\cite{Gue:multiplicativity}. Let $\A_1=\{L_1^1,\dots,L_n^1\}$ and $\A_2=\{L_1^2,\dots,L_m^2\}$ be two ordered\footnote{We consider the orders given by the indices, i.e. $L^k_i < L^k_j \Leftrightarrow i<j$.} line arrangements such that for a fixed $r\in\{0,\dots,\min(n,m)\}$,
\begin{enumerate}
	\item for all $i\leq r$, $L_{i}^1=L_i^2$,
	\item for all $r<i$ and all $r<j$, $L_i^1\notin \A_2$ and $L_j^2\notin \A_1$.
\end{enumerate}

\begin{rmk}\mbox{}
	\begin{enumerate}
		\item If $r=0$ then Condition~(1) is trivial and Condition~(2) can be reformulated as follows: $\A_1$ and $\A_2$ have no line in common.
		\item There is no restriction on the intersection of $\A_1$ and $\A_2$.
	\end{enumerate}
\end{rmk}

\begin{definition}
	The \emph{ordered union} of $\A_1$ and $\A_2$, denoted by $\A_1\ocup\A_2$, is the ordered arrangement $\{L_1^1,\dots,L_n^1,L_{r+1}^2,\dots,L_m^2\}$, with the unique order coinciding with the one of $\A_1$ (resp. $\A_2$) on $L_1^1,\dots,L_n^1$ (resp. $L_{r+1}^2,\dots,L_m^2$) and such that $L_i^1 < L_j^2$ for $i\in\{1,\dots,n\}$ and $j\in\{r+1,\dots,m\}$.
\end{definition}

\Warning As ordered arrangements, $\A_1\ocup\A_2$ and $\A_2\ocup\A_1$ are different; albeit as arrangements (without order) they are equal.

\subsection{Multiplicativity theorem}\mbox{}

Let $f_i^*:\HH^1(M(\A_i);G)\rightarrow\HH^1(M(\A_1\ocup\A_2);G)$ be the map induced by the inclusion $f_i$ of $M(\A_1\ocup\A_2)$ in $M(\A_i)$; similarly, let ${g_i}_*:\HH_1(\Gamma(\A_i))\rightarrow\HH_1(\Gamma(\A_1\ocup\A_2))$ be the map induced by the inclusion of $\Gamma(\A_i)$ in $\Gamma(\A_1\ocup\A_2)$.
So the map $f_i^*\otimes{g_i}_*$ is a morphism defined by
\begin{equation*}
	f_i^*\otimes{g_i}_* : \left\{
	\begin{array}{rccc}
		\HH^1(M(\A_i);G)\otimes_\ZZ\HH_1(\Gamma(\A_i),\ZZ) & \longrightarrow & \HH^1(M(\A_1\ocup\A_2);G)\otimes_\ZZ\HH_1(\Gamma(\A_1\ocup\A_2),\ZZ) \\
		\lambda_\PtoL \otimes \PtoL & \longmapsto & f_i^*(\lambda_\PtoL) \otimes {g_i}_*(\PtoL)
	\end{array}
	\right..
\end{equation*}
If $\Lambda_i$ is an element in $\HH^1(M(\A_i);G)\otimes\HH_1(\Gamma(\A_i))$, then we define
\begin{equation*}
	\Lambda_1 \oplus \Lambda_2 = f_1^*\otimes{g_1}_* (\Lambda_1) + f_2^*\otimes{g_2}_* (\Lambda_2). 
\end{equation*} 

\begin{proposition}
	If $\Lambda_1\in\TLG(\A_1,G)$ and $\Lambda_2\in\TLG(\A_2,G)$, then the tensor $\Lambda_1 \oplus \Lambda_2$ is an element of $\TLG(\A_1\ocup\A_2,G)$.
\end{proposition}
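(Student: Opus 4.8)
The plan is to establish the stronger statement that, for each $i\in\{1,2\}$, the map $f_i^*\otimes{g_i}_*$ carries $\TLG(\A_i,G)$ into $\TLG(\A_1\ocup\A_2,G)$. The proposition then follows immediately: $\TLG(\A_1\ocup\A_2,G)$ is by definition a subgroup of the ambient tensor space, $f_i^*\otimes{g_i}_*$ is a homomorphism, and $\Lambda_1\oplus\Lambda_2$ is the sum of the two images. Write $\A=\A_1\ocup\A_2$. The preliminary step I would carry out is to record the effect of the inclusion $f_i:M(\A)\hookrightarrow M(\A_i)$ on meridians: for $L'\in\A$ one has ${f_i}_*(m_{L'})=m_{L'}$ when $L'\in\A_i$, and ${f_i}_*(m_{L'})=0$ when $L'\notin\A_i$, since a meridian of a line that survives in $M(\A_i)$ bounds a small transverse disc there. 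Dually, for a character $\chi\in\HH^1(M(\A_i);G)$ the pullback $f_i^*(\chi)$ agrees with $\chi$ on the meridians of the lines of $\A_i$ and vanishes on the meridians of the remaining lines of $\A$.

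Fixing $\Lambda_i\in\TLG(\A_i,G)$ with coefficients $\lambda_\PtoL$ and writing $\mu_\PtoL=f_i^*(\lambda_\PtoL)$ for the coefficients of $\Lambda=f_i^*\otimes{g_i}_*(\Lambda_i)$ (supported on the edges coming from $\Gamma(\A_i)$), the boundary condition~\eqref{eq:boundary} requires no work: the target of $f_i^*\otimes{g_i}_*$ is the tensor space $\HH^1(M(\A);G)\otimes_\ZZ\HH_1(\Gamma(\A),\ZZ)$ itself, and ${g_i}_*$, induced by an inclusion of graphs, sends cycles to cycles. For Condition~(I) I would take an edge $\PtoL$ in the support (so $P\in\Sing(\A_i)$, $L\in\A_i$) together with $L'\in\A$ through $P$, and split on whether $L'\in\A_i$: if $L'\notin\A_i$ then $\mu_\PtoL(m_{L'})=0$ by the meridian computation, whereas if $L'\in\A_i$ then $\mu_\PtoL(m_{L'})=\lambda_\PtoL(m_{L'})$, which vanishes by Condition~(I) for $\Lambda_i$.

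Condition~(II) is the only place where a genuine subtlety arises, and it is the step I expect to be the main obstacle. Here I fix an edge $\PtoL$ in the support and a point $P'\in\Sing(\A)$ lying on $L$, and must show $\sum_{L'\ni P'}\mu_\PtoL(m_{L'})=0_G$. By the meridian computation this sum collapses to $\sum_{L'\in\A_i,\,L'\ni P'}\lambda_\PtoL(m_{L'})$. The delicate point is that $P'$, while singular in $\A$, need not be singular in $\A_i$, so Condition~(II) for $\Lambda_i$ does not apply verbatim. I would therefore distinguish two cases. If $P'\in\Sing(\A_i)$, the sum vanishes directly by Condition~(II) for $\Lambda_i$. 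If $P'\notin\Sing(\A_i)$, then since $L\in\A_i$ passes through $P'$ while $P'$ supports no multiple point of $\A_i$, the line $L$ is the \emph{unique} line of $\A_i$ through $P'$; the sum thus reduces to the single term $\lambda_\PtoL(m_L)$, which vanishes by Condition~(I) for $\Lambda_i$ applied to $L$ itself (a line of $\A_i$ containing $P$). This exhausts the conditions and shows $\Lambda\in\TLG(\A,G)$, whence the proposition.
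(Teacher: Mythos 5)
Your proof is correct, and it is in fact more careful than the paper's own. The paper follows the same skeleton---verify Conditions~(I) and~(II) for $\Lambda_1\oplus\Lambda_2$, reducing by linearity to each summand $f_i^*\otimes{g_i}_*(\Lambda_i)$---but it disposes of both conditions with the single sentence that ``by linearity, these conditions are sums of the same conditions for $\Lambda_1$ and $\Lambda_2$.'' Taken literally, that is not quite enough at exactly the spot you flag as the main obstacle: for Condition~(II) at a point $P'\in\Sing(\A_1\ocup\A_2)\setminus\Sing(\A_i)$ (a singular point created only by the union, e.g.\ a transverse intersection of a line of $\A_1$ with a line of $\A_2$), there is no corresponding condition on $\Lambda_i$ of which the new condition could be a sum. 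Your resolution---the sum over $L'\ni P'$ collapses, via the vanishing ${f_i}_*(m_{L'})=0$ for $L'\notin\A_i$, to the single term $\lambda_\PtoL(m_L)$, which is killed by Condition~(I) for $\Lambda_i$ applied to $L$ itself since $P\in L$---is correct and is precisely the missing ingredient. Your preliminary meridian computation is likewise only made explicit in the paper later, inside the proof of Lemma~\ref{lem:technique}, where it is recorded that $\lambda^{1,2}_\PtoL$ acts trivially on the meridians of the lines of $(\A_1\ocup\A_2)\setminus\A_k$; it is implicitly needed already for Condition~(I) in the case $L'\notin\A_i$, which your split handles cleanly. The remaining points (the boundary condition is automatic because ${g_i}_*$ is induced by an inclusion of graphs and sends cycles to cycles; the sum of the two images lies in $\TLG(\A_1\ocup\A_2,G)$ because the latter is a subgroup) agree with the paper. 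In short: same approach, but your version repairs the paper's one-line ``by linearity'' argument by supplying the behavior of $f_i^*$ on meridians and the case analysis on whether $P'\in\Sing(\A_i)$.
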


\begin{proof}
	To be an element of $\TLG(\A_1\ocup\A_2,G)$, the tensor $\Lambda_1\oplus\Lambda_2$ needs to verify the two following conditions:
	\begin{enumerate}
		\item For all $\PtoL\in\Gamma\big(\A_1\ocup\A_2\big)$, and $L'\in\A_1\ocup\A_2$ containing $P$, we have $\lambda_\PtoL(L')=0_G$,
		\item For all $\PtoL\in\Gamma\big(\A_1\ocup\A_2\big)$, and $P'\in\Sing(\A_1\ocup\A_2)$ contained in $L$, we have 
		$$\sum_{L'\ni P'}\lambda_\PtoL(L')=0_G.$$
	\end{enumerate}
	By linearity, these conditions are sums of the same conditions for $\Lambda_1$ and $\Lambda_2$, so it follows that $\Lambda_1\oplus\Lambda_2$ verifies the two previous conditions.
\end{proof}

\begin{lemma}\label{lem:technique}
	For $k\in\{1,2\}$ we have the following equality:
	\begin{equation*}
		\pi_{1,2} \circ \Psi_{1,2} (f_k^*\otimes{g_k}_* (\Lambda_k) ) = \pi_{k} \circ \Psi_{k} (\Lambda_k). 
	\end{equation*}
\end{lemma}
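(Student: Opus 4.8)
The plan is to peel off the outer pairing on both sides and reduce the statement to a homological identity between the induced maps on $\HH_1$. Writing $\Lambda_k=\sum_{P}\sum_{L\ni P}\lambda_{(P\to L)}\otimes (P\to L)$, the definition of $f_k^*\otimes {g_k}_*$ together with $\Psi_{1,2}=\Id\otimes (i_{12}\circ j_{12})_*$ (where $i_{12}$ and $j_{12}$ denote the inclusion and the coherent embedding attached to $\A_1\ocup\A_2$) turns each summand of the left-hand side into $f_k^*(\lambda_{(P\to L)})\otimes (i_{12}\circ j_{12})_*\,{g_k}_*(P\to L)$. The inclusion $f_k:M(\A_1\ocup\A_2)\hookrightarrow M(\A_k)$ satisfies the adjunction $\langle f_k^*\alpha,c\rangle=\langle \alpha,{f_k}_* c\rangle$ between the maps induced in cohomology and homology, so applying $\pi_{1,2}$ rewrites the left-hand side as $\sum \lambda_{(P\to L)}\big({f_k}_*(i_{12}\circ j_{12})_*\,{g_k}_*(P\to L)\big)$. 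Since the right-hand side equals $\sum\lambda_{(P\to L)}\big((i_k\circ j_k)_*(P\to L)\big)$, it suffices to prove the identity ${f_k}_*\circ (i_{12}\circ j_{12})_*\circ {g_k}_*=(i_k\circ j_k)_*$ as maps $\HH_1(\Gamma(\A_k))\to\HH_1(M(\A_k))$.

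Next I would use that, for an element of $\TLG$, the value $\pi\circ\Psi$ is independent of the choice of coherent embedding (this is exactly what Conditions~(I)--(II) guarantee, and what makes $\L$ well defined). A coherent embedding is local to each line: it places the edge $(P\to L)$ as an arc running along $L$, from the node over $P$ out to the generic fibre, inside the boundary of a tubular neighbourhood of $L$. The graph inclusion $g_k:\Gamma(\A_k)\hookrightarrow\Gamma(\A_1\ocup\A_2)$ identifies each edge $(P\to L)$ with $L\in\A_k$ with the corresponding edge of the union. I can therefore choose $j_{12}$ so that, over the lines of $\A_k$, it restricts to the chosen coherent embedding $j_k$; the embedding-independence of $\L$ allows this normalisation without loss of generality.

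The core geometric step is then to compare the two resulting cycles inside $M(\A_k)$. In $B(\A_1\ocup\A_2)$ the arc representing $(P\to L)$ must additionally skirt the extra nodes produced by the intersection points $L\cap L'$ with the new lines $L'\in (\A_1\ocup\A_2)\setminus\A_k$, as well as any increase in the multiplicity of $P$. Pushing forward through $B(\A_1\ocup\A_2)\hookrightarrow M(\A_1\ocup\A_2)\xrightarrow{f_k}M(\A_k)$ and using the description of $j$ in~\cite{FGM} and~\cite{AFG}, the difference between $f_k\circ i_{12}\circ j_{12}\circ g_k(P\to L)$ and $i_k\circ j_k(P\to L)$ is a sum of meridians $m_{L'}$ of the new lines. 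Each such $L'$ is not a component of $\Z(\A_k)$, so $m_{L'}$ bounds a small transverse disc lying entirely in $M(\A_k)$, whence ${f_k}_*(m_{L'})=0$ in $\HH_1(M(\A_k))$; this yields the required identity. I expect this last step to be the main obstacle: it is not purely formal, since passing from $\A_k$ to $\A_1\ocup\A_2$ genuinely changes both $\Gamma$ and the boundary manifold near $P$ and along $L$, so one must match the two coherent embeddings node by node and verify that every discrepancy is accounted for by a meridian of a new line. Everything else is a formal consequence of the adjunction and of the embedding-independence of the loop linking number.
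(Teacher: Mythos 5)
Your skeleton is essentially the paper's: the paper also arranges compatible embeddings (it chooses $h_{1,2}$ so that $f_k\circ h_{1,2}\circ g_k$ is a coherent embedding of $\Gamma(\A_k)$ into $B(\A_k)$, which is your normalisation run in the opposite direction), and your adjunction $\pi\big(f_k^*\lambda\otimes c\big)=\lambda\big({f_k}_*c\big)$ is implicit in its commutative diagram. The divergence — and the gap — is in your final step, where you reduce the lemma to the on-the-nose identity ${f_k}_*\circ(i_{12}\circ j_{12})_*\circ{g_k}_*=(i_k\circ j_k)_*$ on $\HH_1(\Gamma(\A_k))$. This discards the characters $\lambda_\PtoL$ before the geometric comparison, and it is stronger than what your normalisation can deliver. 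A coherent embedding for $\A_1\ocup\A_2$ cannot literally restrict to a prescribed $j_k$ over the lines of $\A_k$: at a singular point $P$ of $\A_k$ whose multiplicity increases in the union, the vertex piece of $B(\A_1\ocup\A_2)$ over $P$ fibers over a sphere with more punctures than the corresponding piece of $B(\A_k)$, so the two embeddings can only be matched near $v_P$ up to loops in a small punctured neighbourhood of $P$, and the first homology of that local complement is generated by meridians of \emph{all} lines through $P$ in the union — including lines of $\A_k$. Such discrepancies $m_{L''}$ with $L''\in\A_k$ are \emph{not} killed by ${f_k}_*$; in the well-definedness argument for $\L$ they are killed precisely by pairing with $\lambda_\PtoL$ via Conditions~(I) and~(II), the safety net your homology-level reduction has thrown away. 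So your assertion that every discrepancy is a meridian of a new line is exactly the unproven point, as you yourself suspected.

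The paper closes this hole by a different, purely combinatorial verification: after fixing $h_k=f_k\circ h_{1,2}\circ g_k$, it applies Theorem~\ref{thm:computation} to both sides. Since deleting strands from a braid diagram preserves the crossings among the remaining strands, $B^k_\PtoL$ is the sub-braid of $B^{1,2}_\PtoL$ on the $\A_k$-strands, so $\ulk_L(B^{1,2}_\PtoL)$ equals $\ulk_L(B^k_\PtoL)$ plus meridians of lines of $(\A_1\ocup\A_2)\setminus\A_k$; the character $f_k^*(\lambda^k_\PtoL)$ annihilates the latter and agrees with $\lambda^k_\PtoL$ on the former (this is your ${f_k}_*(m_{L'})=0$ in dual form), giving the term-by-term identity with no embedding matching at all. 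Your argument can be repaired along your own lines by keeping the pairing: compare $\lambda_\PtoL$ evaluated on the two cycles rather than the cycles themselves; then the detours around new nodes contribute new-line meridians killed by adjunction, while the vertex discrepancies contribute meridians of lines through the singular points on $L$, which vanish under $\lambda_\PtoL$ by Conditions~(I) and~(II).
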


\begin{proof}
	We denote by $h_{k}$ (resp. $h_{1,2}$) the map $i\circ j$ described in Section~\ref{sec:construction} sending $\Gamma(\A_k)$ in $M(\A_k)$ (resp. $\Gamma\big(\A_1\ocup\A_2\big)$ in $M(\A_1\ocup\A_2)$).
	We can choose $h_{1,2}$ such that $f_k \circ h_{1,2} \circ g_k (\Gamma(\A_k))$ is contained in $B(\A_k)$ and is a coherent embedding. We fix $h_k=f_k \circ h_{1,2} \circ g_k$, so we have the following commutative diagram
	\begin{center}
		\begin{tikzcd}
			\Gamma(\A_k) \arrow[r, "h_k"] \arrow[d, "g_k", hook]
			& M(\A_k) \\
			\Gamma\big(\A_1\ocup\A_2\big) \arrow[r,"h_{1,2}"] 
			& M(\A_1\ocup\A_2) \arrow[u,"f_k", hook]
		\end{tikzcd}
	\end{center}
	The lemma is then a consequence of Theorem~\ref{thm:computation}. Indeed, in the first hand, the braid $B^k_{\PtoL}\in\A_k$ can be obtained from the braid $B^{1,2}_{\PtoL}$ in $\A_1\ocup\A_2$ by conserving only the strands associated to the lines of $\A_k$. On the other hand, let $\Lambda_k = \sum_{P\in\Sing(\A_k)} \sum_{L\ni P} \lambda^{k}_\PtoL \otimes \PtoL$ and	$f_k^*\otimes{g_k}_* (\Lambda_k) = \sum_{P\in\Sing(\A_1\ocup\A_2)} \sum_{L\ni P} \lambda^{1,2}_\PtoL \otimes \PtoL$. If $\PtoL$ is not an edge of $\Gamma(\A_k)$ then, by definition, $\lambda^{1,2}_{\PtoL}$ is trivial; and if $\PtoL$ is an edge of $\Gamma(\A_k)$, then $\lambda^{1,2}_{\PtoL}$ acts trivially on the meridians of the lines of $(\A_1\ocup\A_2)\setminus\A_k$ and $\lambda^{1,2}_{\PtoL}(m_L)=\lambda^{k}_{\PtoL}(m_L)$ for any $L\in\A_k$. It follows that
	\begin{equation*}
		\lambda^{1,2}_{\PtoL}\big(\ulk_L(B^{1,2}_{\PtoL})\big) =
		\left\{
			\begin{array}{ll}
				\lambda^{k}_{\PtoL}\big(\ulk_L(B^k_{\PtoL})\big),\quad & \text{if } \PtoL\in\Gamma(\A_k),\\
				0, & \text{otherwise.} 
			\end{array}
		\right.
		\qedhere
	\end{equation*}
\end{proof}

\begin{theorem}\label{thm:multiplicativity}
	Let $\Lambda_1\in\TLG(\A_1,G)$ and $\Lambda_2\in\TLG(\A_2,G)$, we have
	\begin{equation*}
		\L(\A_1\ocup\A_2,\Lambda_1\oplus\Lambda_2) = \L(\A_1,\Lambda_1) + \L(\A_2,\Lambda_2).
	\end{equation*}
\end{theorem}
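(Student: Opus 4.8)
The plan is to reduce the statement to the combination of two facts already in place: the additivity of the composite $\pi_{1,2}\circ\Psi_{1,2}$, and Lemma~\ref{lem:technique}, which performs the geometric comparison of braids. Since Lemma~\ref{lem:technique} has already done the substantive work, the theorem should follow essentially formally, with no further geometric input.

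First I would unfold the definitions. By the definition of the loop linking number, $\L(\A_1\ocup\A_2,\Lambda_1\oplus\Lambda_2) = \pi_{1,2}\circ\Psi_{1,2}(\Lambda_1\oplus\Lambda_2)$; and by the definition of $\oplus$, the argument on the right is $f_1^*\otimes{g_1}_*(\Lambda_1) + f_2^*\otimes{g_2}_*(\Lambda_2)$. Next I would invoke linearity: the map $\Psi_{1,2}=\Id_{\HH^1(M(\A_1\ocup\A_2);G)}\otimes(i\circ j)$ is $\ZZ$-linear and the natural pairing $\pi_{1,2}$ is additive, so their composition is additive. This lets me split the loop linking number of the union into a sum of two terms, one attached to each $f_k^*\otimes{g_k}_*(\Lambda_k)$.

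Finally I would apply Lemma~\ref{lem:technique} term by term: for each $k\in\{1,2\}$ it gives $\pi_{1,2}\circ\Psi_{1,2}\bigl(f_k^*\otimes{g_k}_*(\Lambda_k)\bigr)=\pi_{k}\circ\Psi_{k}(\Lambda_k)=\L(\A_k,\Lambda_k)$. Summing the two contributions yields exactly $\L(\A_1,\Lambda_1)+\L(\A_2,\Lambda_2)$, which is the claimed identity.

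The only genuine obstacle lies upstream, inside Lemma~\ref{lem:technique}, and has already been dealt with there: one must choose a single coherent embedding $h_{1,2}$ of $\Gamma(\A_1\ocup\A_2)$ whose restriction $f_k\circ h_{1,2}\circ g_k$ is again a coherent embedding landing in $B(\A_k)$, and then use the braid-monodromy formula of Theorem~\ref{thm:computation} to verify that the strands coming from the lines of $(\A_1\ocup\A_2)\setminus\A_k$ contribute nothing to the upper-linking once it is evaluated against $\lambda^{1,2}_\PtoL$. With that compatibility of embeddings secured, the present theorem is purely a matter of bookkeeping: additivity of the pairing followed by two applications of the lemma.
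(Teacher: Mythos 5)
Your proposal is correct and matches the paper's proof exactly: the paper likewise expands $\Lambda_1\oplus\Lambda_2$ by definition, splits $\pi_{1,2}\circ\Psi_{1,2}$ by additivity, and applies Lemma~\ref{lem:technique} to each summand to obtain $\pi_k\circ\Psi_k(\Lambda_k)=\L(\A_k,\Lambda_k)$. You also correctly locate the substantive geometric content (the compatible choice of coherent embedding and the vanishing of contributions from strands of $(\A_1\ocup\A_2)\setminus\A_k$) in Lemma~\ref{lem:technique}, just as the paper does.
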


\begin{proof}
	To clarify the notation, we write with an index $1,2$ (resp. $1$ and $2$) the maps associated to the arrangement $\A_1\ocup\A_2$ (resp. $\A_1$ and $\A_2$). Using Lemma~\ref{lem:technique}, the theorem comes from the following computation.
	\begin{align*}
		\L(\A_1\ocup\A_2,\Lambda_1\oplus\Lambda_2) 
		& = \pi_{1,2} \circ \Psi_{1,2} (f_1^*\otimes{g_1}_* (\Lambda_1) + f_2^*\otimes{g_2}_* (\Lambda_2)),\\
		& = \pi_{1,2} \circ \Psi_{1,2} (f_1^*\otimes{g_1}_* (\Lambda_1) ) + \pi_{1,2} \circ \Psi_{1,2} (f_2^*\otimes{g_2}_* (\Lambda_2)),\\
		& = \pi_{1} \circ \Psi_{1} (\Lambda_1) + \pi_{2} \circ \Psi_{2} (\Lambda_2),\\
		& = \L(\A_1,\Lambda_1) + \L(\A_2,\Lambda_2).
		\qedhere
	\end{align*}
\end{proof}

\subsection{Application to Rybnikov's arrangements}\mbox{}

	Let $\ML^+$ and $\ML^-$ be two complex conjugated realizations of the MacLane matroid (see~\cite{McL}). 
	
	\begin{lemma}[\cite{Cad}]\label{lem:technique1}
		The tensor linking group $\TLG(\ML^\pm,\ZZ/3\ZZ)$ is isomorphic to $\ZZ/3\ZZ$, and for any non-trivial $\Lambda\in\TLG(\ML^\pm,\ZZ/3\ZZ)$, the loop linking number $\L(\ML^+,\Lambda)$ is not $0$.
	\end{lemma}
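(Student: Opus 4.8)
The plan is to treat the two assertions separately, since the first is a purely combinatorial (linear-algebra) statement while the second reduces to a single explicit monodromy computation.

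First I would establish the isomorphism $\TLG(\ML^\pm,\ZZ/3\ZZ)\cong\ZZ/3\ZZ$. Because $\TLG$ depends only on the combinatorics (the remark following its definition in Section~\ref{sec:construction}), and since $\ML^+$ and $\ML^-$ are two realizations of the same MacLane matroid, it suffices to compute a single group $\TLG(C_{\ML},\ZZ/3\ZZ)$, where $C_{\ML}$ is the MacLane combinatorics. As in the proof of Lemma~\ref{lem:TLG}, I would view this group as the right kernel over $\ZZ/3\ZZ$ of the matrix whose rows encode the boundary condition~(\ref{eq:boundary}) together with Conditions~(I) and~(II). Since MacLane has only $8$ lines, the incidence graph and the resulting linear system are small enough that one checks directly that this kernel is one-dimensional over $\ZZ/3\ZZ$; this yields $\TLG\cong\ZZ/3\ZZ$ and simultaneously produces an explicit generator $\Lambda_0$.

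For the second assertion I would first reduce to evaluating the loop linking number on that generator. The map $\L(\ML^+,-)=\pi\circ\Psi$ is a composition of group homomorphisms (the functorial map $\Psi=\Id\otimes(i\circ j)_*$ and the evaluation pairing $\pi$), hence is itself a homomorphism $\TLG(\ML^+,\ZZ/3\ZZ)\to\ZZ/3\ZZ$. As $\TLG(\ML^+,\ZZ/3\ZZ)=\langle\Lambda_0\rangle$, every non-trivial element equals $\Lambda_0$ or $2\Lambda_0$, and both have non-zero image precisely when $\L(\ML^+,\Lambda_0)\neq 0$ in $\ZZ/3\ZZ$. Thus the lemma follows once this single value is shown to be non-zero. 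To produce it I would fix explicit equations for $\ML^+$ over $\QQ(\omega)$ (with $\omega$ a primitive cube root of unity) and apply Theorem~\ref{thm:computation} through the braided wiring diagram method of Section~\ref{sec:computation_wiring}: compute a braided wiring diagram of $\ML^+$, extract each sub-braid $B_{\PtoL}$, read off the upper-linkings $\ulk_L(B_{\PtoL})$, and evaluate the characters $\lambda_{\PtoL}$ coming from $\Lambda_0$ on them. Alternatively, since the loop linking number restricts to the $\I$-invariant on inner-cyclic tensors, via $\I(\A,\xi,\gamma)=\L(\A,\xi\otimes\gamma)$, I would identify $\Lambda_0$ with an inner-cyclic triple and invoke the non-vanishing of the MacLane $\I$-invariant established in~\cite{AFG}, thereby bypassing a fresh monodromy computation.

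The main obstacle is the explicit monodromy step: obtaining a correct braided wiring diagram for MacLane and carefully bookkeeping the signs of the real and virtual crossings that enter each $\ulk_L(B_{\PtoL})$. The linear-algebra part of the first assertion is routine, and the reduction to the generator is formal; all the genuine content lies in getting this one braid computation right, or, equivalently, in matching $\Lambda_0$ to the inner-cyclic datum for which the $\I$-invariant of~\cite{AFG} is already known to be non-zero.
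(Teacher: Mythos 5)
The paper never proves this lemma: it is imported wholesale from Cadegan-Schlieper's thesis~\cite{Cad} (where it is obtained by his homological-algebra monodromy method), so there is no in-paper argument to match your proposal against. Judged on its own terms, your proof is correct, and it is assembled precisely from the paper's toolkit. The first assertion is handled exactly as in Lemma~\ref{lem:TLG}: since $\TLG$ is combinatorially determined and $\ML^+$, $\ML^-$ realize the same matroid, a single kernel computation over $\ZZ/3\ZZ$ for the MacLane combinatorics suffices. Your reduction of the second assertion to the generator is also sound: $\L(\ML^+,\cdot)=\pi\circ\Psi$ is additive in $\Lambda$, and since $\ZZ/3\ZZ$ is a field, $\L(\ML^+,\Lambda_0)\neq 0$ forces $\L(\ML^+,2\Lambda_0)=2\L(\ML^+,\Lambda_0)\neq 0$. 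Note also that Proposition~\ref{prop:Galois_invariance} lets you treat only $\ML^+$, since conjugation inverts the value on $\ML^-$; this is consistent with the normalization $\L(\ML^+,\Lambda_0)=1$, $\L(\ML^-,\Lambda_0)=2$ fixed immediately after the lemma.

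Of your two routes for the non-vanishing, the wiring-diagram computation via Theorem~\ref{thm:computation} mirrors exactly what the paper does for the arrangements $\M_i$ (Theorem~\ref{thm:ooZP} together with the tables of Appendix~\ref{App:values}), so it would certainly work, at the cost of the sign bookkeeping you identify. The $\I$-invariant shortcut is the more economical and is fully legitimate: the identity $\I(\A,\xi,\gamma)=\L(\A,\xi\otimes\gamma)$ for inner-cyclic triples places $\xi\otimes\gamma$ in $\TLG(\ML^\pm,\ZZ/3\ZZ)$, and the non-vanishing of the MacLane $\I$-invariant from~\cite{AFG} then both shows $\xi\otimes\gamma$ is a non-trivial element (no circularity: a non-zero value certifies a non-zero tensor) and, by your linearity argument, yields the conclusion for every non-trivial $\Lambda$. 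Indeed the values $1$ and $2$ the paper assigns to $\Lambda_0$ are exactly the $\I$-invariant values of the MacLane pair, so this route is implicitly the one the paper relies on when normalizing $\Lambda_0$. No gap.
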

	
	In the following of this subsection, we fix $\Lambda_0$ the generator of $\TLG(\ML^\pm,\ZZ/3\ZZ)$ verifying
	\begin{equation*}
		\L(\ML^+,\Lambda_0) = 1, \quad \text{and} \quad \L(\ML^-,\Lambda_0) = 2.
	\end{equation*}
	
	Rybnikov's arrangements $\R^+$ and $\R^-$ are constructed by gluing together two copies of the positive MacLane arrangements for $\R^+$ and a copy of the positive with a copy of negative MacLane arrangements for $\R^-$ (see~\cite{Ryb,ACCM:Rybnikov}). These gluings can be described as follows. Let $\ell_0,\ell_1,\ell_2$ be three concurrent lines in the MacLane matroid, and let $\ell^\pm_0,\ell^\pm_1,\ell^\pm_2$ be their realizations in $\ML^\pm$. Let $\psi^+$ (resp. $\psi^-$) be a generic linear map such that for $i\in\{1,2,3\}$, $\psi^+(\ell^+_i)=\ell^+_i$ (resp. $\psi^-(\ell^-_i)=\ell^+_i$). Rybnikov's arrangements are then defined as $\R^+=\ML^+\ocup\psi^+(\ML^+)$ and $\R^-=\ML^+\ocup\psi^-(\ML^-)$. 
	
	\begin{lemma}\label{lem:technique3}
		Let $\R^+$ and $\R^-$ be the two Rybnikov arrangements, we have
		\begin{equation*}
			\L(\R^+,\Lambda_0 \oplus \Lambda_0) = 2, \quad \text{and} \quad \L(\R^-,\Lambda_0 \oplus \Lambda_0) = 0.
		\end{equation*}
	\end{lemma}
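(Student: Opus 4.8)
The plan is to apply the multiplicativity theorem (Theorem~\ref{thm:multiplicativity}) directly to the two Rybnikov arrangements, which are by construction ordered unions. For $\R^+ = \ML^+ \ocup \psi^+(\ML^+)$, I would first check that the pair $(\ML^+, \psi^+(\ML^+))$ satisfies the hypotheses of the multiplicativity setup with $r=3$: Conditions~(1) and~(2) preceding the definition of the ordered union hold because the gluing identifies exactly the three concurrent lines $\ell^+_0,\ell^+_1,\ell^+_2$ (so the first $r=3$ lines agree after reindexing), while all remaining lines of the two copies are genuinely distinct. The tensor $\Lambda_0 \oplus \Lambda_0$ is then a well-defined element of $\TLG(\R^+,\ZZ/3\ZZ)$ by the Proposition immediately preceding Lemma~\ref{lem:technique}.

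Once the setup is verified, the computation is essentially immediate. By Theorem~\ref{thm:multiplicativity},
\begin{equation*}
	\L(\R^+,\Lambda_0\oplus\Lambda_0) = \L(\ML^+,\Lambda_0) + \L(\psi^+(\ML^+),\Lambda_0).
\end{equation*}
Since $\psi^+$ is a linear automorphism of $\CC\PP^2$, it is in particular an orientation- and order-preserving homeomorphism, so by the invariance theorem (Theorem~\ref{thm:invariance}) we have $\L(\psi^+(\ML^+),\Lambda_0) = \L(\ML^+,\Lambda_0) = 1$. Hence $\L(\R^+,\Lambda_0\oplus\Lambda_0) = 1 + 1 = 2$ in $\ZZ/3\ZZ$. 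For $\R^- = \ML^+ \ocup \psi^-(\ML^-)$ the same argument gives
\begin{equation*}
	\L(\R^-,\Lambda_0\oplus\Lambda_0) = \L(\ML^+,\Lambda_0) + \L(\psi^-(\ML^-),\Lambda_0) = 1 + \L(\ML^-,\Lambda_0) = 1 + 2 = 0,
\end{equation*}
again in $\ZZ/3\ZZ$, using the normalization $\L(\ML^-,\Lambda_0)=2$ fixed just before the lemma.

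The one subtle point — and the main thing to get right rather than a genuine obstacle — is the behavior of $\psi^-$. Here $\psi^-$ maps $\ML^-$ to an arrangement combinatorially isomorphic to $\ML^-$ (not $\ML^+$), and since $\ML^+$ and $\ML^-$ are complex conjugate realizations, I must ensure I am reading off $\L(\psi^-(\ML^-),\Lambda_0)$ from the \emph{negative} value $\L(\ML^-,\Lambda_0)=2$ and not from the positive one. Because $\psi^-$ is a genuine linear map (orientation-preserving, not a conjugation), Theorem~\ref{thm:invariance} transports the loop linking number of $\ML^-$ unchanged, so the correct summand is $2$. The reason the two Rybnikov arrangements are distinguished is precisely this difference: the second copy contributes $1$ in $\R^+$ but $2$ in $\R^-$, yielding $2$ versus $0$ in $\ZZ/3\ZZ$. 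I would close by remarking that this already separates the two values, which is what the subsequent argument needs; the fact that the ordered union $\Lambda_0\oplus\Lambda_0$ is correctly matched across the two arrangements via the shared sub-combinatorics on $\ML^+$ guarantees that $\tilde h(\Lambda_0\oplus\Lambda_0)$ can be compared meaningfully.
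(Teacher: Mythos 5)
Your proof is correct and takes essentially the same route as the paper: apply Theorem~\ref{thm:multiplicativity} to the ordered-union decomposition $\R^\pm=\ML^+\ocup\psi^\pm(\ML^\pm)$, then use that $\psi^\pm\in\PGL_3(\CC)$ is an order- and orientation-preserving homeomorphism of $\CC\PP^2$ to get $\L(\psi^\pm(\ML^\pm),\Lambda_0)=\L(\ML^\pm,\Lambda_0)$, and conclude from the normalization $\L(\ML^+,\Lambda_0)=1$, $\L(\ML^-,\Lambda_0)=2$ in $\ZZ/3\ZZ$. Your additional checks (the ordered-union hypotheses with $r=3$ and the membership $\Lambda_0\oplus\Lambda_0\in\TLG(\R^\pm,\ZZ/3\ZZ)$) simply make explicit steps the paper leaves implicit.
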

	
	\begin{proof}
		By construction, Rybnikov's arrangements are ordered unions of two MacLane arrangements. By Theorem~\ref{thm:multiplicativity}, we have 
		\begin{align*}
			\L(\R^\pm,\Lambda_0 \oplus \Lambda_0) 
			& = \L\big(\ML^+ \ocup \psi^\pm(\ML^\pm) , \Lambda_0 \oplus \Lambda_0\big),\\
			& = \L\big(\ML^+,\Lambda_0\big) + \L\big(\psi^\pm(\ML^\pm),\Lambda_0\big),\\
			& = \L(\ML^+,\Lambda_0) + \L(\ML^\pm,\Lambda_0).
		\end{align*}
		Then we have $\L(\R^+,\Lambda_0 \oplus \Lambda_0) = 2$ and $\L(\R^-,\Lambda_0 \oplus \Lambda_0) = 0$.
	\end{proof}
	
	\begin{theorem}\label{thm:Rybnikov_linking}
		Let $\R^+$ and $\R^-$ be the two Rybnikov arrangements, we have
		\begin{equation*}
			\Lb(\R^+,\Lambda_0 \oplus \Lambda_0) = \{1,2\}, \quad \text{and} \quad \Lb(\R^-,\Lambda_0 \oplus \Lambda_0) = \{0\}.
		\end{equation*}
	\end{theorem}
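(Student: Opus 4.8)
The plan is to determine both sets by evaluating $\L(\sigma\cdot\R^\pm,\Lambda_0\oplus\Lambda_0)$ for every $\sigma\in\Aut(\R^\pm)$ and then applying the inversion built into the definition of $\Lb$. The case $\sigma=\Id$ is exactly Lemma~\ref{lem:technique3}, which gives $\L(\R^+,\Lambda_0\oplus\Lambda_0)=2$ and $\L(\R^-,\Lambda_0\oplus\Lambda_0)=0$. Since the inverse of $2$ in $\ZZ/3\ZZ$ is $1$ and the inverse of $0$ is $0$, this already yields the inclusions $\{1,2\}\subseteq\Lb(\R^+,\Lambda_0\oplus\Lambda_0)$ and $\{0\}\subseteq\Lb(\R^-,\Lambda_0\oplus\Lambda_0)$. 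It remains to prove that no other $\sigma$ enlarges these sets, i.e.\ that every automorphism value is nonzero for $\R^+$ and zero for $\R^-$.

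First I would record the structural fact that the three concurrent lines $\ell_0,\ell_1,\ell_2$ along which the two MacLane pieces are glued are distinguished by the combinatorics of $\R^\pm$; hence every $\sigma\in\Aut(\R^\pm)$ preserves this central pencil and therefore either preserves or interchanges the two MacLane sub-arrangements $\ML_1,\ML_2$ (the two geometric realizations forming $\R^\pm$). In either case $\sigma\cdot\R^\pm$ is again an ordered union of two MacLane arrangements, so Theorem~\ref{thm:multiplicativity} decomposes the value as
\begin{equation*}
	\L(\sigma\cdot\R^\pm,\Lambda_0\oplus\Lambda_0)=\L\big(\tau_1\cdot\ML^{\epsilon_1},\Lambda_0\big)+\L\big(\tau_2\cdot\ML^{\epsilon_2},\Lambda_0\big),
\end{equation*}
where $\tau_1,\tau_2\in\Aut(\ML)$ are the induced automorphisms of the two pieces and $\{\epsilon_1,\epsilon_2\}$ records their signs ($\{+,+\}$ for $\R^+$ and $\{+,-\}$ for $\R^-$). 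By Lemma~\ref{lem:technique1} each summand lives in $\TLG(\ML,\ZZ/3\ZZ)\simeq\ZZ/3\ZZ$, so the problem is reduced to controlling the single-MacLane quantities $\L(\tau\cdot\ML^{\epsilon},\Lambda_0)$.

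The heart of the argument, and the step I expect to be the main obstacle, is to show that the two summands always carry a common sign relative to the identity, namely $\L(\tau_k\cdot\ML^{\epsilon_k},\Lambda_0)=s\cdot\L(\ML^{\epsilon_k},\Lambda_0)$ with one and the same $s\in\{\pm1\}$ for $k=1,2$. This is where the gluing is decisive: the restrictions $\tau_1,\tau_2$ induce the \emph{same} permutation of the shared pencil $\ell_0,\ell_1,\ell_2$, and the sign through which $\Aut(\ML)$ acts on the cyclic group $\TLG(\ML,\ZZ/3\ZZ)$ is governed by precisely this permutation; I would verify this either by the direct formula of Remark~\ref{rmk:computation_automorphism} or by analysing the $\Aut(\ML)$-action on $\TLG(\ML,\ZZ/3\ZZ)$. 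Granting the common sign, and using Lemma~\ref{lem:technique3} for the individual pieces, we obtain
\begin{equation*}
	\L(\sigma\cdot\R^\pm,\Lambda_0\oplus\Lambda_0)=s\big(\L(\ML^+,\Lambda_0)+\L(\ML^{\pm},\Lambda_0)\big),
\end{equation*}
the bracket equalling $1+1=2$ for $\R^+$ and $1+2=0$ for $\R^-$; the case where $\sigma$ swaps the pieces is identical since the sum is symmetric. Thus every automorphism value of $\R^+$ is $\pm2\in\{1,2\}$ and every one of $\R^-$ is $0$, and combining this with the inversion in the definition of $\Lb$ gives $\Lb(\R^+,\Lambda_0\oplus\Lambda_0)=\{1,2\}$ and $\Lb(\R^-,\Lambda_0\oplus\Lambda_0)=\{0\}$, as claimed.
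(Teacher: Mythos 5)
Your overall architecture tracks the paper's proof closely: the identity case via Lemma~\ref{lem:technique3}, the decomposition of $\L(\sigma\cdot\R^\pm,\Lambda_0\oplus\Lambda_0)$ into two MacLane summands via Theorem~\ref{thm:multiplicativity} together with the $\PGL_3(\CC)$-invariance absorbing $\psi^\pm$, the observation that swapping the two pieces changes nothing because the same $\Lambda_0$ sits on both, and the final inversion built into $\Lb$. The genuine gap is exactly the step you yourself flag as the heart: the claim that the restrictions $\tau_1,\tau_2$ of an automorphism of $\R^\pm$ to the two MacLane pieces scale the values by a \emph{common} sign $s$. You never prove it --- you only announce two possible verification routes --- and the justification you sketch, namely that the sign of the $\Aut(\ML)$-action on $\TLG(\ML^\pm,\ZZ/3\ZZ)\simeq\ZZ/3\ZZ$ is ``governed by'' the induced permutation of the shared pencil $\{\ell_0,\ell_1,\ell_2\}$, is itself an unproven assertion: there is no a priori reason why the character $\Aut(\ML)\to\{\pm 1\}$ given by the action on $\TLG$ should factor through the restriction to the three shared lines, and your computation of the bracket as $1+1$ resp.\ $1+2$ collapses without exactly this. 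Your preliminary claim that every automorphism of $\R^\pm$ preserves the central pencil and hence preserves or exchanges the two pieces is likewise plausible but left unargued.

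The paper closes this hole in one stroke by citing the structure result of~\cite{ACCM:Rybnikov} (Proposition~1.11): $\Aut(\R^\pm)\simeq\Sigma_3\times\ZZ/2\ZZ$, where the $\Sigma_3$-factor acts by the \emph{same} permutation $\sigma$ on both MacLane copies and the $\ZZ/2\ZZ$-factor fixes or exchanges them. With $\tau_1=\tau_2=\sigma$ there is no sign analysis to do: for $\R^+$ the two summands are literally equal, so the value is $2\,\L(\sigma\cdot\ML^+,\Lambda_0)$, which is nonzero by Lemma~\ref{lem:technique1} (since $\L(\sigma\cdot\ML^+,\Lambda_0)=\L(\ML^+,\Lambda')$ for a non-trivial $\Lambda'\in\TLG(\ML^\pm,\ZZ/3\ZZ)$) and hence lies in $\{1,2\}$; for $\R^-$ one uses $\overline{\sigma\cdot\ML^+}=\sigma\cdot\ML^-$ and Proposition~\ref{prop:Galois_invariance} to see that the two summands are inverse to each other and cancel. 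To repair your write-up, either import that automorphism computation, after which your sign lemma becomes vacuous, or genuinely carry out the deferred $\TLG$-action computation via Remark~\ref{rmk:computation_automorphism}; as it stands, the central step of your proof is missing.
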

	
	\begin{proof}
		As noted in~\cite{ACCM:Rybnikov} (as a consequence of Proposition~1.11), the automorphism group of $\Aut(\R^\pm)$ is isomorphic to $\Sigma_3\times\ZZ/2\ZZ$. The first part permutes the lines $\{\ell^\pm_0,\ell^\pm_1,\ell^\pm_2\}$ and the second part fixes or exchanges the MacLane arrangements. Since we are considering the same tensor element $\Lambda_0$ on the two copies of the MacLane arrangment then for any $\sigma\in\Sigma_3$ we have:
		\begin{equation*}
			\L\big( (\sigma\times 0) \cdot\R^\pm,\Lambda_0\oplus\Lambda_0 \big) = \L\big( (\sigma\times 1) \cdot\R^\pm,\Lambda_0\oplus\Lambda_0 \big).
		\end{equation*}
		By Theorem~\ref{thm:multiplicativity}, we have $\L\big( (\sigma\times 0) \cdot \R^\pm,\Lambda_0\oplus\Lambda_0) = \L\big(\sigma\cdot\ML^+,\Lambda_0\big) + \L\big(\sigma\cdot\psi^\pm(\ML^\pm),\Lambda_0\big)$.	Since $\psi^\pm$ are in $\PGL_3(\CC)$, then they are homeomorphisms of $\CC\PP^2$. Thus $\L(\sigma\cdot\A^\pm,\Lambda_0)=\L(\sigma\cdot\psi^\pm(\A^\pm),\Lambda_0)$. This implies that
		\begin{equation*}
			\L\big( (\sigma\times 0) \cdot\R^\pm,\Lambda_0\oplus\Lambda_0 \big) = \L\big(\sigma\cdot\ML^+,\Lambda_0\big) + \L\big(\sigma\cdot\ML^\pm,\Lambda_0\big).
		\end{equation*}
		As a consequence of Lemma~\ref{lem:technique1}, we have $\L\big( (\sigma\times 0) \cdot \R^+,\Lambda_0\oplus\Lambda_0)\subset \{1,2\}$. By Lemma~\ref{lem:technique3} and the definition of the full loop linking number, the inclusion is an equality. Since $\overline{\sigma\cdot\ML^+}=\sigma\cdot\ML^-$, then by Proposition~\ref{prop:Galois_invariance}, $\L\big( (\sigma\times 0) \cdot \R^-,\Lambda_0\oplus\Lambda_0)=0$.
	\end{proof}
	
	%As a direct consequence of Theorem~\ref{thm:full_loop_linking} and Theorem~\ref{thm:Rybnikov_linking}, we have the following corollary.
	
	\begin{corollary}
		The complements of Rybnikov's arrangements are not homeomorphic.
	\end{corollary}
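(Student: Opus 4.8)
The plan is to deduce the corollary by contradiction from the two invariance results already established, namely Theorem~\ref{thm:full_loop_linking}, which makes the full loop linking number an invariant of the homeomorphism type of the complement, together with the explicit values furnished by Theorem~\ref{thm:Rybnikov_linking}, where $\Lb(\R^+,\Lambda_0\oplus\Lambda_0)=\{1,2\}$ while $\Lb(\R^-,\Lambda_0\oplus\Lambda_0)=\{0\}$. Suppose, for contradiction, that there is a homeomorphism $h:M(\R^+)\to M(\R^-)$. Since the MacLane arrangements are combinatorially stable (as recalled earlier), I would first confirm that the glued arrangements $\R^\pm$ are combinatorially stable as well, so that Theorem~\ref{thm:full_loop_linking} is applicable and yields
\begin{equation*}
	\{1,2\}=\Lb(\R^+,\Lambda_0\oplus\Lambda_0)=\Lb(\R^-,\tilde{h}(\Lambda_0\oplus\Lambda_0)),
\end{equation*}
where $\tilde{h}$ is the induced isomorphism of tensor linking groups.

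The only delicate point is to control the tensor $\tilde{h}(\Lambda_0\oplus\Lambda_0)$, since $\tilde{h}$ need not be the identity under the canonical identification $\TLG(\R^+,\ZZ/3\ZZ)=\TLG(\R^-,\ZZ/3\ZZ)=\TLG(C,\ZZ/3\ZZ)$ coming from the shared combinatorics $C$. By Lemma~\ref{lem:meridians_preserving}, $\tilde{h}$ is induced by an automorphism $\bar{h}\in\Aut(C)$, together with a possible global sign arising from orientation reversal. Recalling from~\cite{ACCM:Rybnikov} that $\Aut(C)\simeq\Sigma_3\times\ZZ/2\ZZ$, whose $\Sigma_3$-factor permutes the three concurrent lines used in the gluing and whose $\ZZ/2\ZZ$-factor exchanges the two MacLane copies, I would check that $\Lambda_0\oplus\Lambda_0$ is fixed up to sign by all of $\Aut(C)$: the exchange of copies fixes it because the same generator $\Lambda_0$ sits on each copy, and the $\Sigma_3$-action sends $\Lambda_0$ to $\pm\Lambda_0$ because $\TLG(\ML^\pm,\ZZ/3\ZZ)\simeq\ZZ/3\ZZ$, whose only group automorphisms are $\pm1$. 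Hence $\tilde{h}(\Lambda_0\oplus\Lambda_0)=\pm(\Lambda_0\oplus\Lambda_0)$.

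Finally, the full loop linking number is insensitive to this sign, since the $\pm1$ built into the definition of $\Lb$ gives $\Lb(\R^-,-(\Lambda_0\oplus\Lambda_0))=\Lb(\R^-,\Lambda_0\oplus\Lambda_0)$. Combining this with the previous step, $\Lb(\R^-,\tilde{h}(\Lambda_0\oplus\Lambda_0))=\Lb(\R^-,\Lambda_0\oplus\Lambda_0)=\{0\}$, which contradicts the displayed equality $\{1,2\}=\{0\}$; therefore no such homeomorphism exists. The main obstacle is precisely this bookkeeping around $\tilde{h}$: one must rule out that the combinatorially induced isomorphism carries $\Lambda_0\oplus\Lambda_0$ to a tensor whose full loop linking number in $\R^-$ differs from $\{0\}$. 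Once the symmetry of $\Lambda_0\oplus\Lambda_0$ under $\Aut(C)$ is in hand this is automatic, and the contradiction closes the argument. An alternative, slightly more robust route that avoids computing $\tilde{h}$ explicitly would be to compare the full collections $\{\Lb(\R^\pm,\Lambda)\mid\Lambda\in\TLG(C,\ZZ/3\ZZ)\}$, which are equal whenever the complements are homeomorphic; this requires determining $\TLG(\R^\pm,\ZZ/3\ZZ)$ and verifying that the value $\{1,2\}$ is attained for $\R^+$ but never for $\R^-$.
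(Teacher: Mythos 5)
Your proposal is correct and is essentially the paper's argument: the corollary is stated there without proof precisely because it follows at once from Theorem~\ref{thm:full_loop_linking} together with the values $\Lb(\R^+,\Lambda_0\oplus\Lambda_0)=\{1,2\}$ and $\Lb(\R^-,\Lambda_0\oplus\Lambda_0)=\{0\}$ of Theorem~\ref{thm:Rybnikov_linking}. The one place where you do more work than necessary is the bookkeeping around $\tilde{h}(\Lambda_0\oplus\Lambda_0)$: since $\tilde{h}$ is induced (via Lemma~\ref{lem:meridians_preserving} and combinatorial stability) by some $\bar{h}\in\Aut(C)$, and since the definition $\Lb(\A,\Lambda)=\{\L(\sigma\cdot\A,\Lambda)^{\pm1}\mid\sigma\in\Aut(\A)\}$ already quantifies over the full automorphism group, the equivariance $\L(\A,\bar{h}_*\Lambda)=\L(\bar{h}^{-1}\cdot\A,\Lambda)$ gives $\Lb(\R^-,\tilde{h}(\Lambda_0\oplus\Lambda_0))=\Lb(\R^-,\Lambda_0\oplus\Lambda_0)=\{0\}$ with no need to identify $\tilde{h}(\Lambda_0\oplus\Lambda_0)$ up to sign. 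Your alternative route through the symmetry of the tensor does work, but note it silently uses that an element of the $\Sigma_3$-factor restricts to the \emph{same} MacLane automorphism on both copies (so the induced signs $\pm1$ on the two $\TLG(\ML^\pm,\ZZ/3\ZZ)$ factors agree); if the signs could differ one would get $\Lambda_0\oplus(-\Lambda_0)$, whose loop linking number on $\R^-$ is $1-2\neq 0$ by multiplicativity, and the contradiction would evaporate. The diagonal structure of $\Aut(\R^\pm)\simeq\Sigma_3\times\ZZ/2\ZZ$ from~\cite{ACCM:Rybnikov}, which you cite, does justify this, but it deserves to be said explicitly; the paper's formulation, with automorphisms acting on the arrangement rather than on the tensor, avoids the issue entirely because the same $\sigma$ hits both (projectively equivalent) copies, yielding $2\L(\sigma\cdot\ML^+,\Lambda_0)\neq 0$.
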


\begin{rmk}\mbox{}
	\begin{enumerate}
		\item This result is in adequation with Rybnikov's results~\cite{Ryb}, who proved that the fundamental groups of $M(\R^+)$ and $M(\R^-)$ are not isomorphic. Nevertheless, and to our knowledge, this is the first proof, without computer assistance, that these complements are not homeomorphic. 
		\item	Even if such a multiplicativity theorem exists for the $\I$-invariant (see~\cite{Gue:multiplicativity}), it does not allows to detect a difference in Rybnikov's arrangements because of too strong combinatorial conditions imposed by the $\I$-invariant.
	\end{enumerate}
\end{rmk}

\subsection{Rybnikov-like Zariski pairs}\mbox{}

We can generalize Rybnikov's result by the following theorem.

\begin{theorem}\label{thm:Rybnikov-like}
	Let $\A$ be an arrangement. If $\Lambda_0\in\TLG(\A,G)$ and $\psi^+,\psi^-\in\PGL_3(\CC)$ are such that:
	\begin{enumerate}[label=(\roman*)]
		\item $\Aut(\A\ocup\psi^+(\A))=H^+ \times \ZZ/2\ZZ$ (resp. $\Aut(\A\ocup\psi^-(\overline{A}))= H^- \times \ZZ/2\ZZ$), with $H^+$ (resp. $H^-$) a subgroup of $\Aut(\A)=\Aut(\overline{\A})$ and $\ZZ/2\ZZ$ fixes or exchanges $\A$ with $\psi^+(\A)$ (resp. $\psi^-(\overline{\A})$),
		\item for any $h \in H^+ \cup H^-$
			\begin{equation*}
				2.\L(h\cdot\A,\Lambda_0) \in G\setminus\{0_G\},
			\end{equation*}
	\end{enumerate}
	then the complements of $\mathfrak{A}^+=\A\ocup\psi^+(\A)$ and $\mathfrak{A}^-=\A\ocup\psi^-(\overline{\A})$ are not homeomorphic.
\end{theorem}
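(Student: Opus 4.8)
The plan is to mirror the proof of Theorem~\ref{thm:Rybnikov_linking} for the original Rybnikov arrangements: I will compute the two full loop linking numbers $\Lb(\mathfrak{A}^+,\Lambda_0\oplus\Lambda_0)$ and $\Lb(\mathfrak{A}^-,\Lambda_0\oplus\Lambda_0)$, show that they are distinct subsets of $G$, and then invoke the invariance of the full loop linking number (Theorem~\ref{thm:full_loop_linking}) to rule out a homeomorphism. Since $\psi^\pm\in\PGL_3(\CC)$ and complex conjugation both preserve the intersection lattice, the arrangements $\A$, $\psi^+(\A)$ and $\psi^-(\overline{\A})$ all share the combinatorics of $\A$, so $\Lambda_0\oplus\Lambda_0$ is a genuine element of both $\TLG(\mathfrak{A}^+,G)$ and $\TLG(\mathfrak{A}^-,G)$ by the proposition preceding Theorem~\ref{thm:multiplicativity}. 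If $\mathfrak{A}^+$ and $\mathfrak{A}^-$ had non-isomorphic combinatorics the statement would already follow from Jiang--Yau~\cite{JiaYau}, so I may assume they share the same combinatorics $\mathfrak{C}$, in which case $H^+=H^-$.

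First I would compute $\Lb(\mathfrak{A}^+,\Lambda_0\oplus\Lambda_0)$. By hypothesis~(i) one has $\Aut(\mathfrak{A}^+)=H^+\times\ZZ/2\ZZ$, and because the two copies carry the same tensor $\Lambda_0$, the $\ZZ/2\ZZ$ factor (which fixes or swaps the copies) leaves the value unchanged, exactly as in Theorem~\ref{thm:Rybnikov_linking}. For $\sigma\in H^+$ the multiplicativity theorem gives $\L(\sigma\cdot\mathfrak{A}^+,\Lambda_0\oplus\Lambda_0)=\L(\sigma\cdot\A,\Lambda_0)+\L(\sigma\cdot\psi^+(\A),\Lambda_0)$; since $\psi^+$ lies in the connected group $\PGL_3(\CC)$ it is a homeomorphism of $\CC\PP^2$ isotopic to the identity, so each summand equals $\L(\sigma\cdot\A,\Lambda_0)$ and the value is $2\,\L(\sigma\cdot\A,\Lambda_0)$. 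Thus $\Lb(\mathfrak{A}^+,\Lambda_0\oplus\Lambda_0)=\{\,(2\,\L(\sigma\cdot\A,\Lambda_0))^{\pm1}\mid\sigma\in H^+\,\}$. The analogous computation for $\mathfrak{A}^-$ produces $\L(\sigma\cdot\A,\Lambda_0)+\L(\sigma\cdot\psi^-(\overline{\A}),\Lambda_0)$; dropping the projective map $\psi^-$ and using $\sigma\cdot\overline{\A}=\overline{\sigma\cdot\A}$, the second summand equals $\L(\overline{\sigma\cdot\A},\Lambda_0)$, which by Proposition~\ref{prop:Galois_invariance} equals $-\L(\sigma\cdot\A,\Lambda_0)$ (complex conjugation sends each $N$th root of unity to its inverse, hence acts by negation on $\ZZ/N\ZZ$). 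The two summands cancel, so $\L(\sigma\cdot\mathfrak{A}^-,\Lambda_0\oplus\Lambda_0)=0$ for every $\sigma$ and $\Lb(\mathfrak{A}^-,\Lambda_0\oplus\Lambda_0)=\{0\}$.

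These two sets are separated by hypothesis~(ii): since $2\,\L(h\cdot\A,\Lambda_0)\neq0_G$ for every $h\in H^+$, no element of $\Lb(\mathfrak{A}^+,\Lambda_0\oplus\Lambda_0)$ equals $0_G$, so $\Lb(\mathfrak{A}^+,\Lambda_0\oplus\Lambda_0)\neq\{0\}=\Lb(\mathfrak{A}^-,\Lambda_0\oplus\Lambda_0)$. To conclude, suppose for contradiction that a homeomorphism $h\colon M(\mathfrak{A}^+)\to M(\mathfrak{A}^-)$ exists. By Theorem~\ref{thm:full_loop_linking} it yields $\Lb(\mathfrak{A}^+,\Lambda_0\oplus\Lambda_0)=\Lb(\mathfrak{A}^-,\tilde h(\Lambda_0\oplus\Lambda_0))$. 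The induced map $\tilde h$ is combinatorial, namely the action of some $\bar h\in\Aut(\mathfrak{C})$ on the combinatorially determined tensor linking group; and since the full loop linking number is invariant under the $\Aut$-action on the tensor---a relabelling consequence of its definition together with Remark~\ref{rmk:computation_automorphism}---the right-hand side equals $\Lb(\mathfrak{A}^-,\Lambda_0\oplus\Lambda_0)=\{0\}$, contradicting the inequality just established.

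The step I expect to be the main obstacle is this last one: pinning down the induced isomorphism $\tilde h$ on the tensor linking group and showing that passing to the full loop linking number absorbs its ambiguity. Concretely, one must verify that $\tilde h$ is realised by an automorphism of the shared combinatorics and that $\Lb(\mathfrak{A}^-,\bar h\cdot\Lambda)=\Lb(\mathfrak{A}^-,\Lambda)$; the orientation bookkeeping---balancing the $\pm1$ built into $\Lb$ against the negation produced by complex conjugation in Proposition~\ref{prop:Galois_invariance}---must also be tracked carefully, and Theorem~\ref{thm:full_loop_linking} requires the combinatorial stability of $\mathfrak{A}^\pm$. By contrast, the computations of the preceding paragraphs are routine once one observes that elements of $\PGL_3(\CC)$ act on the loop linking number as orientation-preserving homeomorphisms and that conjugation contributes exactly the negation recorded in Proposition~\ref{prop:Galois_invariance}.
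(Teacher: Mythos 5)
Your proof is correct and follows essentially the same route as the paper's: use condition~(i) to reduce the $\Aut$-orbit to $H^\pm$, apply Theorem~\ref{thm:multiplicativity} together with Proposition~\ref{prop:Galois_invariance} to get $\L\big(\sigma\cdot\mathfrak{A}^\pm,\Lambda_0\oplus\Lambda_0\big)=\L(\sigma\cdot\A,\Lambda_0)\pm\L(\sigma\cdot\A,\Lambda_0)$, so that condition~(ii) gives $\Lb(\mathfrak{A}^+,\Lambda_0\oplus\Lambda_0)\subset G\setminus\{0_G\}$ while $\Lb(\mathfrak{A}^-,\Lambda_0\oplus\Lambda_0)=\{0_G\}$, and conclude via Theorem~\ref{thm:full_loop_linking}. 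Your additional bookkeeping (the Jiang--Yau reduction to the lattice-isomorphic case, the absorption of the induced map $\tilde h$ by an automorphism of the shared combinatorics, and the combinatorial-stability hypothesis of Theorem~\ref{thm:full_loop_linking}) merely makes explicit steps the paper leaves implicit.
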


\begin{proof}
	Basically, the proof is the same as for Theorem~\ref{thm:Rybnikov_linking}. By Condition~(i), for any $h^\pm\in H^\pm$
	\begin{equation*}
		\L\big((h^\pm\times 0)\cdot\mathfrak{A}^\pm,\Lambda_0\oplus\Lambda_0\big) 
		= \L\big((h^\pm\times 1)\cdot\mathfrak{A}^\pm,\Lambda_0\oplus\Lambda_0\big).
	\end{equation*}
	By Theorem~\ref{thm:multiplicativity} and Proposition~\ref{prop:Galois_invariance}, we get 
	\begin{equation*}
		\L\big((h^\pm\times 0)\cdot\mathfrak{A}^\pm,\Lambda_0\oplus\Lambda_0\big) = \L\big(h^\pm\cdot\A,\Lambda_0\big) \pm \L\big(h^\pm\cdot \A,\Lambda_0 \big). 
	\end{equation*}
	So $\L\big((h^-\times 0)\cdot\mathfrak{A}^-,\Lambda_0\oplus\Lambda_0\big)=0_G$; and by Condition~(ii), we have $\L\big((h^+\times 0)\cdot\mathfrak{A}^+,\Lambda_0\oplus\Lambda_0\big) \subset G\setminus \{0_G\}$. As for Rybnikov's arrangements, this induces that $\overline{\L}(\mathfrak{A}^-,\Lambda_0\oplus\Lambda_0)=\{0_G\}$ and $\overline{\L}(\mathfrak{A}^+,\Lambda_0\oplus\Lambda_0)\subset G\setminus\{0\}$. We conclude using Theorem~\ref{thm:full_loop_linking}.	
\end{proof}

\begin{corollary}
	Let $\A$, $\Lambda_0$, $\psi^+$ and $\psi^-$ be as in Theorem~\ref{thm:Rybnikov-like}. If, in addition, $\mathfrak{A}^+=\A\ocup\psi^+(\A)$ and $\mathfrak{A}^-=\A\ocup\psi^-(\overline{\A})$ have isomorphic intersection lattices, then they form a Zariski pair with non-homeomorphic complements.
\end{corollary}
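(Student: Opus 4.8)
The plan is to read off this corollary directly from Theorem~\ref{thm:Rybnikov-like} together with the definition of a Zariski pair recalled in the introduction. Recall that a Zariski pair is a pair of arrangements whose intersection lattices are isomorphic but whose embedded topologies differ. The first of these two requirements is granted outright by the additional hypothesis that $\mathfrak{A}^+$ and $\mathfrak{A}^-$ have isomorphic intersection lattices, so the only point to address is the distinctness of the embedded topologies.

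For the second requirement, I would argue by contraposition at the level of pairs. Suppose there were a homeomorphism $h$ of $\CC\PP^2$ with $h(\Z(\mathfrak{A}^+))=\Z(\mathfrak{A}^-)$, i.e. a homeomorphism realizing an equivalence of embedded topologies. Restricting $h$ to the complements would then yield a homeomorphism $M(\mathfrak{A}^+)=\CC\PP^2\setminus\Z(\mathfrak{A}^+)\to\CC\PP^2\setminus\Z(\mathfrak{A}^-)=M(\mathfrak{A}^-)$. But Theorem~\ref{thm:Rybnikov-like}, applied under the standing hypotheses (i) and (ii) on $\A$, $\Lambda_0$, $\psi^+$ and $\psi^-$, asserts precisely that $M(\mathfrak{A}^+)$ and $M(\mathfrak{A}^-)$ are not homeomorphic. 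This contradiction shows that no such $h$ exists, hence the embedded topologies of $\mathfrak{A}^+$ and $\mathfrak{A}^-$ are distinct, and in particular the two arrangements are genuinely different.

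Combining the two points, $\mathfrak{A}^+$ and $\mathfrak{A}^-$ have isomorphic combinatorics and inequivalent embedded topologies, so they form a Zariski pair; the non-homeomorphicity of their complements, which is the conclusion of Theorem~\ref{thm:Rybnikov-like}, is exactly the strengthening stated in the corollary. I do not expect any genuine obstacle here: all the topological content is carried by Theorem~\ref{thm:Rybnikov-like} (and, through it, by the multiplicativity Theorem~\ref{thm:multiplicativity}, the invariance Theorem~\ref{thm:full_loop_linking} and the Galois Proposition~\ref{prop:Galois_invariance}). The only subtlety worth a sentence is the implication ``non-homeomorphic complements $\Rightarrow$ distinct embedded topologies,'' which is immediate from the restriction argument above, since any homeomorphism of the ambient pairs necessarily restricts to a homeomorphism of the complements.
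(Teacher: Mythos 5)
Your proposal is correct and matches the paper's (implicit) reasoning: the paper states this corollary without proof precisely because, as you observe, a homeomorphism of the pairs $(\CC\PP^2,\Z(\mathfrak{A}^\pm))$ would restrict to a homeomorphism of the complements, which Theorem~\ref{thm:Rybnikov-like} forbids, while the lattice isomorphism is assumed outright. Your extra sentence making explicit the implication ``non-homeomorphic complements $\Rightarrow$ distinct embedded topologies'' is exactly the right point to flag, and nothing further is needed.
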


\subsection{Homotopy and topology of the complement}\mbox{}

In~\cite{Gue:homotopy}, the author constructs examples of homotopy-equivalent Zariski pairs as follows. Let $\A_1=\{L_1^1,\dots,L_n^1\}$ and $\A_2=\{L_1^2,\dots,L_n^2\}$ be two real complexified line arrangements, with isomorphic ordered combinatorics (with orders given by the indices). We assume that their intersection is generic (this is always possible up to an action of $\PGL_3(\CC)$). For $i\in\{1,2\}$, let $T_i=\{D_1^i,D_2^i\}$ be an arrangement such that $D_j^i\cap\Sing(\A_1\cup\A_2)=\emptyset$ and $D_1^i\cap D_2^i\cap L_1^i\neq\emptyset$. We define the ordered arrangements $\mathfrak{A}_1=T_1\ocup\A_1\ocup\A_2$ and $\mathfrak{A}_2=T_2\ocup\A_2\ocup\A_1$. By~\cite[Theorem~1.4]{Gue:homotopy}, if $\A_1$ and $\A_2$ form a Zariski pair, then $\mathfrak{A}_1$ and $\mathfrak{A}_2$ form a homotopy-equivalent Zariski pair.

\begin{theorem}\label{thm:homotopy_vs_homeomorphism}
	It exists lattice-isomorphic arrangements with non-homeomorphic and homotopy-equivalent complements.
\end{theorem}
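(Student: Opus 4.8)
The plan is to feed a carefully chosen Zariski pair into the construction recalled from~\cite{Gue:homotopy} and then to separate the resulting complements with the full loop linking number. Concretely, I would start from an ordered pair $\A_1,\A_2$ with isomorphic ordered combinatorics $C$ whose \emph{complements} are distinguished by the full loop linking number, and set $\mathfrak{A}_1=T_1\ocup\A_1\ocup\A_2$ and $\mathfrak{A}_2=T_2\ocup\A_2\ocup\A_1$ as in the previous subsection. By~\cite[Theorem~1.4]{Gue:homotopy}, these two arrangements are lattice-isomorphic and have homotopy-equivalent complements, so the entire content of the statement reduces to proving that $M(\mathfrak{A}_1)$ and $M(\mathfrak{A}_2)$ are \emph{not} homeomorphic.

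To obtain this I would exploit the multiplicativity theorem. Each $T_i$ is a pencil of two lines, so $\TLG(T_i,G)$ is trivial and contributes nothing. Fixing a generator $\Lambda_0\in\TLG(C,G)$, I would evaluate the loop linking number on the tensor supported only on the first combinatorial copy, $\Lambda=0\oplus\Lambda_0\oplus 0$. By Theorem~\ref{thm:multiplicativity} this gives $\L(\mathfrak{A}_1,\Lambda)=\L(\A_1,\Lambda_0)$. Now any lattice isomorphism $\Phi\colon\mathfrak{A}_1\to\mathfrak{A}_2$ must send the line carrying the node of $T_1$ to the line carrying the node of $T_2$, and hence interchanges the two copies of $C$; consequently $\tilde\Phi(\Lambda)$ is supported on the copy realized by $\A_2$ inside $\mathfrak{A}_2$, and multiplicativity yields $\L(\mathfrak{A}_2,\tilde\Phi(\Lambda))=\L(\A_2,\Lambda_0)$. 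Thus the comparison of the two complements is reduced, via the full loop linking number, to the comparison of $\L(\A_1,\Lambda_0)$ with $\L(\A_2,\Lambda_0)$.

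I would conclude with Theorem~\ref{thm:full_loop_linking}: the existence of a homeomorphism $M(\mathfrak{A}_1)\to M(\mathfrak{A}_2)$ would force $\Lb(\mathfrak{A}_1,\Lambda)=\Lb(\mathfrak{A}_2,\tilde\Phi(\Lambda))$. The main obstacle, and the reason the input pair must be chosen with care, lies exactly here. Because the construction interchanges the two copies, a \emph{conjugate} pair, for which $\L(\A_2,\Lambda_0)=-\L(\A_1,\Lambda_0)$ by Proposition~\ref{prop:Galois_invariance}, would produce equal full loop linking numbers once the $\pm 1$ and the action of $\Aut$ are accounted for, and nothing could be concluded. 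I would therefore take $\A_1,\A_2$ to be a pair whose loop linking values are genuinely unrelated, $\L(\A_2,\Lambda_0)\neq\pm\L(\A_1,\Lambda_0)$, so that $\Lb(\A_1,\Lambda_0)$ and $\Lb(\A_2,\Lambda_0)$ are already distinct sets; a pair produced by the Rybnikov-type doubling of the preceding subsections is the natural candidate. Trivializing $\Aut(\mathfrak{A}_i)$ through the choice of $T_i$, in the spirit of Lemma~\ref{lem:trivial_automorphism}, then makes the inequality $\Lb(\mathfrak{A}_1,\Lambda)\neq\Lb(\mathfrak{A}_2,\tilde\Phi(\Lambda))$ decisive, contradicting the existence of the homeomorphism. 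The remaining technical point to verify is that such an input pair also fulfils the genericity and (real-)complexified hypotheses under which~\cite[Theorem~1.4]{Gue:homotopy} guarantees the homotopy equivalence; once this is secured, $M(\mathfrak{A}_1)$ and $M(\mathfrak{A}_2)$ are lattice-isomorphic, homotopy-equivalent, and non-homeomorphic, as claimed.
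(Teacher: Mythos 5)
Your overall architecture is exactly the paper's: feed a Zariski pair into the construction of~\cite{Gue:homotopy}, note that the lattice isomorphism carries the tensor supported on the $T$-attached copy of $\A_1$ inside $\mathfrak{A}_1$ to the copy realized by $\A_2$ inside $\mathfrak{A}_2$, compute both sides by Theorem~\ref{thm:multiplicativity}, and conclude with the full loop linking number. The reduction to comparing $\L(\A_1,\Lambda_0)$ with $\L(\A_2,\Lambda_0)$, and your observation that a conjugate pair with $\L(\A_2,\Lambda_0)=-\L(\A_1,\Lambda_0)$ would be useless, are both correct and correspond to what the paper actually does.

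The genuine gap is your choice of input pair. You defer, as ``the remaining technical point,'' the verification that the pair satisfies the hypotheses of~\cite[Theorem~1.4]{Gue:homotopy} --- but that theorem requires \emph{real complexified} arrangements, and your proposed candidate, a Rybnikov-type doubling, can never satisfy it: MacLane's configuration is not realizable over $\RR$ (its realizations need non-real coefficients), and more generally any pair of the form $\A\ocup\psi^+(\A)$ versus $\A\ocup\psi^-(\overline{\A})$ is only a genuine pair when $\overline{\A}\neq\A$, i.e.\ when $\A$ is not real. So the homotopy-equivalence half of the statement is not secured by your candidate, and the deferred point is not a technicality but the crux. The paper resolves it differently: it takes the $13$-line pair of~\cite{GBVS}, which \emph{is} real complexified, and works with $G=\ZZ/2\ZZ$, where the sign ambiguity you worry about is vacuous ($x=-x$), so the values $\L(\A_1,\Lambda_0)=1$ and $\L(\A_2,\Lambda_0)=0$ already give $\Lb$-sets $\{1\}\neq\{0\}$; your condition $\L(\A_2,\Lambda_0)\neq\pm\L(\A_1,\Lambda_0)$ is then satisfied without any exotic input.

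A secondary inaccuracy: you propose ``trivializing $\Aut(\mathfrak{A}_i)$ through the choice of $T_i$,'' but this is impossible --- the two generic lines $D_1^i,D_2^i$ are combinatorially interchangeable, so $\Aut(\mathfrak{A}_i)$ always contains at least a $\ZZ/2\ZZ$ factor. What is actually needed for Corollary~\ref{cor:full_loop_linking} is invariance of the value under $\Aut(\mathfrak{A}_i)$, and the paper obtains it by computing $\Aut(\mathfrak{A}_i)\simeq\ZZ/2\ZZ\times\Aut(\A_{i+1})$: the node of $T_i$ sits on a line of $\A_i$ fixed only by the identity, which forces every automorphism to fix the support copy $\A_i$ line by line, whence $\L\big((k,\sigma)\cdot\mathfrak{A}_i,\,0\otimes\Lambda_0\otimes 0\big)=\L(\A_i,\Lambda_0)$ by multiplicativity. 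Your argument needs this step (or a substitute for it) spelled out; without control of how automorphisms move the support of the tensor, the passage from Theorem~\ref{thm:multiplicativity} to Theorem~\ref{thm:full_loop_linking} does not close.
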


\begin{proof}
	Let $\A_1=\{L_1^1,\dots,L_{13}^1\}$ and $\A_2=\{L_1^2,\dots,L_{13}^2\}$ be the Zariski pair with 13 lines and only double and triple points given in~\cite{GBVS}, and $\mathfrak{A}_1$ and $\mathfrak{A}_2$ constructed from $\A_1$ and $\A_2$ as in the first paragraph of this section. As previously noted, $\mathfrak{A}_1$ and $\mathfrak{A}_2$ are lattice-isomorphic arrangements with homotopy-equivalent complements. In order to complete the proof, we need to prove that these complements are not homeomorphic.
	
	In~\cite{GBVS}, the arrangements $\A_1$ and $\A_2$ are distinguished using the $\I$-invariant. We consider $\xi\in\HH^1(M(\A_i);\ZZ/2\ZZ)$ and $\gamma\in\HH_1(\Gamma(\A_i);\ZZ)$ such that $(\A_i,\xi,\gamma)$ is an inner-cyclic triple, or equivalently such that the tensor $\Lambda_0=\xi\otimes\gamma\in\TLG(\A_i,\ZZ/2\ZZ)$. From~\cite[proof of Theorem~2.6]{GBVS}, we have
	\begin{equation*}
		\L(\A_1,\Lambda_0)\equiv 1\mod 2 \quad\text{and}\quad \L(\A_2,\Lambda_0) \equiv 0\mod 2.
	\end{equation*}
	
	The combinatorics shared by the $\A_i$'s is such that 3 lines contain all the triple points. We assume that these lines are $L_1^i$, $L_2^i$ and $L_3^i$. This implies that the set $\{L_1^i,L_2^i,L_3^i\}$ is fixed by any automorphism of the combinatorics. By the description of the automorphism group of the combinatorics of the $\A_i$'s given in~\cite[Remark~2.5]{GBVS}, we know that only one line of $\{L_1^i,L_2^i,L_3^i\}$ is fixed by all the automorphisms of the combinatorics, the two others being fixed solely by the identity. We assume that this fixed line is $L_3^i$. This implies that $\Aut(T_i\ocup\A_i)\simeq\ZZ/2\ZZ$ where the action fixes or exchanges $D_1^i$ and $D_2^i$. Note that this action always fixes $\A_i$ line by line. Since $T_i\ocup\A_i$ and $\A_{i+1}$ (where the indices are considered modulo 2) are both connected arrangements (see~\cite{Fan} for the definition), it follows that $\Aut(\mathfrak{A}_i)\simeq\ZZ/2\ZZ\times\Aut(\A_{i+1})$. The action of $(k,\sigma)\in\Aut(\mathfrak{A}_i)$ fixes or exchanges $D_1^i$ and $D_2^i$ by $k$, fixes $\A_i$, and acts on $\A_{i+1}$ by~$\sigma$. This description, together with Theorem~\ref{thm:multiplicativity}, implies that for all $(k,\sigma)\in\Aut(\mathfrak{A}_i)$,
	\begin{align*}
		\L\big((k,\sigma)\cdot\mathfrak{A}_i,\ 0\otimes\Lambda_0\otimes 0\big) 
		& = \L\left( (k\cdot T_i) \ocup \A_i \ocup (\sigma\cdot\A_{i+1}),\ 0\otimes\Lambda_0\otimes 0\right), \\
		& = \L(k\cdot T_i, 0) + \L(\A_i,\Lambda_0) + \L(\sigma\cdot\A_{i+1},0), \\
		& = \L(\A_i,\Lambda_0).
	\end{align*}
	By Corollary~\ref{cor:full_loop_linking}, we obtain that $\mathfrak{A}_1$ and $\mathfrak{A}_2$ have non-homeomorphic complements.
\end{proof}

As a consequence of the previous proof, we obtain the following corollary.

\begin{corollary}\label{cor:not_homotopy_determined}
	The loop linking number and the full loop linking number are not determined by the homotopy type of the complement. 
\end{corollary}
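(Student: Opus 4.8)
The plan is to extract the statement directly from the computations carried out in the proof of Theorem~\ref{thm:homotopy_vs_homeomorphism}, since the arrangements $\mathfrak{A}_1$ and $\mathfrak{A}_2$ produced there already furnish the required counterexample. First I would recall that, by construction and by~\cite[Theorem~1.4]{Gue:homotopy}, the complements $M(\mathfrak{A}_1)$ and $M(\mathfrak{A}_2)$ are homotopy-equivalent. It therefore suffices to exhibit a tensor on which the loop linking numbers of $\mathfrak{A}_1$ and $\mathfrak{A}_2$ disagree, which immediately contradicts homotopy-invariance.

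The natural choice is the tensor $0\otimes\Lambda_0\otimes 0\in\TLG(\mathfrak{A}_i,\ZZ/2\ZZ)$ already used above, where $\Lambda_0$ is the inner-cyclic tensor distinguishing the Zariski pair $\A_1,\A_2$ of~\cite{GBVS}. The multiplicativity theorem (Theorem~\ref{thm:multiplicativity}) gives $\L(\mathfrak{A}_i, 0\otimes\Lambda_0\otimes 0)=\L(\A_i,\Lambda_0)$, and it was recorded in the preceding proof that $\L(\A_1,\Lambda_0)\equiv 1$ and $\L(\A_2,\Lambda_0)\equiv 0 \bmod 2$. Hence the two loop linking numbers differ while the complements are homotopy-equivalent, which shows at once that the loop linking number is not a homotopy invariant.

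For the full loop linking number I would invoke that the same computation persists after applying any combinatorial automorphism: the proof of Theorem~\ref{thm:homotopy_vs_homeomorphism} establishes $\L((k,\sigma)\cdot\mathfrak{A}_i, 0\otimes\Lambda_0\otimes 0)=\L(\A_i,\Lambda_0)$ for every $(k,\sigma)\in\Aut(\mathfrak{A}_i)$. Consequently $\Lb(\mathfrak{A}_i, 0\otimes\Lambda_0\otimes 0)=\{\L(\A_i,\Lambda_0)\}$ in $\ZZ/2\ZZ$, the $\pm$ ambiguity in the definition of $\Lb$ being invisible over $\ZZ/2\ZZ$. These singletons are $\{1\}$ and $\{0\}$ respectively, so the full loop linking numbers also differ, establishing that neither invariant is determined by the homotopy type of the complement.

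I do not anticipate a genuine obstacle here, as the corollary only repackages the numerical output of the preceding proof. The only point requiring care is to confirm that $\mathfrak{A}_1$ and $\mathfrak{A}_2$ carry the \emph{corresponding} tensor under the fixed lattice isomorphism, so that one is genuinely comparing the same invariant on both arrangements; this is guaranteed because the two arrangements share the same combinatorics and $\Lambda_0$ lives in the combinatorially-defined group $\TLG(\cdot,\ZZ/2\ZZ)$.
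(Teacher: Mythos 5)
Your proposal is correct and follows exactly the route the paper intends: the paper derives this corollary ``as a consequence of the previous proof,'' i.e.\ from the arrangements $\mathfrak{A}_1$ and $\mathfrak{A}_2$ of Theorem~\ref{thm:homotopy_vs_homeomorphism}, whose complements are homotopy-equivalent by~\cite[Theorem~1.4]{Gue:homotopy} while $\L(\mathfrak{A}_i,0\otimes\Lambda_0\otimes 0)=\L(\A_i,\Lambda_0)$ takes the distinct values $1$ and $0$ in $\ZZ/2\ZZ$, with the automorphism computation forcing $\Lb(\mathfrak{A}_1,\cdot)=\{1\}\neq\{0\}=\Lb(\mathfrak{A}_2,\cdot)$. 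Your observations that the $\pm$ ambiguity vanishes over $\ZZ/2\ZZ$ and that the tensors correspond under the combinatorially defined $\TLG$ are exactly the points that make the extraction work.
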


\begin{rmk}
	The proof of Theorem~\ref{thm:homotopy_vs_homeomorphism} also implies that the $\I$-invariant is not determined by the homotopy type of the complement.
\end{rmk}

\section{Discussions}
	
During our research on the linking properties of line arrangements, in particular those which leads to the present paper, we noticed that in the wide range of examples computed, the loop linking number seems to be trivial as soon as $G$ has no torsion element. So, we conjecture the following.\\[-10pt]

\noindent\textbf{Conjecture.} \emph{Let $G$ be an Abelian group without torsion and $\A$ a complex line arrangement. For any $\Lambda\in\TLG(\A,G)$, the loop linking number of $\A$ associated to the tensor $\Lambda$ is trivial.}\\[-10pt]

\noindent Note that in particular, this conjecture implies that if a tensor element $\Lambda_0$ in $\TLG(\A,\ZZ/n\ZZ)$ arises as a reduction of an element of $\TLG(\A,\ZZ)$ then $\L(\A,\Lambda_0) = 0$.\\

In addition to the previous conjecture and to the regard of the high number of Zariski pairs which share the two properties: being Galois conjugated in the $5$th cyclotomic field (or in an isomorphic field), and having a tensor linking group with coefficients in $\ZZ/5\ZZ$, it seems natural to discuss the following question.\\[-10pt]

\noindent\textbf{Question.} \emph{For a prime number $p\geq 5$, let $\A_1\dots,\A_{p-1}$ be arrangements conjugated in the $p$-th cyclotomic fields and such that $\TLG(\A_i,\ZZ/p\ZZ)$ is not trivial. If $\Lambda_0\in\TLG(\A_i,\ZZ/p\ZZ)$ does not arised from the reduction of an integral tensor element, then do we necessarly have $\L(\A_i,\Lambda_0)\neq 0$?}\\[-10pt]

The fact that the definition field contains naturally all the $p$-roots of unity is mendatory by Proposition~\ref{prop:Galois_invariance}. To solve this question and/or the conjecture could help to understand the behavior of the branched Galois covers of $\CC\PP^2$.
	
\section*{Acknowledgements}
	During this work the author have been supported, first by the postdoctoral grant \#2017/15369-0 of the Funda\c c\~ao de Amparo \`a Pesquisa do Estado de S\~ao Paulo (FAPESP), and second by the Polish Academy of Sciences.
	
	The author would like to thank \textsc{J.~Viu-Sos} for all their rewarding discussions and all his remarks/comments which greatly contributed to improve the quality of this article. He is also grateful to \textsc{E.~Hollen} for her efficient and careful proofreading of this manuscript.

%%%%%%%%%%%%%%%%%%%%%%%%%%%%%%
%         Appendices         %
%%%%%%%%%%%%%%%%%%%%%%%%%%%%%%
	
\appendix
		\newpage
\section{Values for the arrangements $\M_i$}\label{App:values}

\subsection{Generator of $\TLG(C,\ZZ/5\ZZ)$}\mbox{}

The generator $\Lambda_0$ of $\TLG(C,\ZZ/5\ZZ)$ is given by the following characters $\lambda_{\PtoL}$.

\begin{equation*}
\begin{aligned}[t]
(0, 0, 0, 0, 0, 0, 0, 0, 0, 0)\ = & \ \lambda_{ P_{1,2,6} \rightarrow  L_{1} } \\ 
(0, 1, 0, 0, 0, 4, 0, 4, 0, 1)\ = & \ \lambda_{ P_{1,3,9} \rightarrow  L_{1} } \\ 
(0, 4, 2, 0, 0, 1, 0, 1, 3, 4)\ = & \ \lambda_{ P_{1,4,5} \rightarrow  L_{1} } \\ 
(0, 2, 3, 3, 2, 3, 0, 0, 2, 0)\ = & \ \lambda_{ P_{1,7} \rightarrow  L_{1} } \\ 
(0, 3, 0, 2, 3, 2, 0, 0, 0, 0)\ = & \ \lambda_{ P_{1,8,10} \rightarrow  L_{1} } \\ 
 \\  
(0, 0, 0, 0, 0, 0, 0, 0, 0, 0)\ = & \ \lambda_{ P_{1,2,6} \rightarrow  L_{2} } \\ 
(1, 0, 0, 2, 3, 4, 0, 0, 2, 3)\ = & \ \lambda_{ P_{2,3,7} \rightarrow  L_{2} } \\ 
(0, 0, 3, 0, 2, 0, 2, 0, 3, 0)\ = & \ \lambda_{ P_{2,4,10} \rightarrow  L_{2} } \\ 
(1, 0, 0, 0, 0, 4, 0, 0, 0, 0)\ = & \ \lambda_{ P_{2,5,9} \rightarrow  L_{2} } \\ 
(3, 0, 2, 3, 0, 2, 3, 0, 0, 2)\ = & \ \lambda_{ P_{2,8} \rightarrow  L_{2} } \\ 
 \\  
(0, 4, 0, 4, 0, 1, 1, 1, 0, 4)\ = & \ \lambda_{ P_{1,3,9} \rightarrow  L_{3} } \\ 
(4, 0, 0, 0, 0, 3, 0, 0, 1, 2)\ = & \ \lambda_{ P_{2,3,7} \rightarrow  L_{3} } \\ 
(4, 4, 0, 0, 0, 1, 1, 0, 1, 4)\ = & \ \lambda_{ P_{3,4,8} \rightarrow  L_{3} } \\ 
(2, 2, 0, 3, 0, 0, 3, 2, 3, 0)\ = & \ \lambda_{ P_{3,5} \rightarrow  L_{3} } \\ 
(0, 0, 0, 3, 0, 0, 0, 2, 0, 0)\ = & \ \lambda_{ P_{3,6,10} \rightarrow  L_{3} } \\ 
 \\  
(0, 4, 3, 0, 0, 0, 1, 2, 4, 1)\ = & \ \lambda_{ P_{1,4,5} \rightarrow  L_{4} } \\ 
(2, 0, 2, 0, 3, 0, 3, 3, 2, 0)\ = & \ \lambda_{ P_{2,4,10} \rightarrow  L_{4} } \\ 
(3, 1, 0, 0, 2, 0, 1, 0, 4, 4)\ = & \ \lambda_{ P_{3,4,8} \rightarrow  L_{4} } \\ 
(0, 0, 0, 0, 0, 0, 0, 0, 0, 0)\ = & \ \lambda_{ P_{4,6} \rightarrow  L_{4} } \\ 
(0, 0, 0, 0, 0, 0, 0, 0, 0, 0)\ = & \ \lambda_{ P_{4,7,9} \rightarrow  L_{4} } \\ 
 \\  
(0, 2, 0, 0, 0, 4, 4, 2, 3, 0)\ = & \ \lambda_{ P_{1,4,5} \rightarrow  L_{5} } \\ 
(4, 0, 0, 1, 0, 1, 4, 0, 0, 0)\ = & \ \lambda_{ P_{2,5,9} \rightarrow  L_{5} } \\ 
(3, 3, 0, 2, 0, 0, 2, 3, 2, 0)\ = & \ \lambda_{ P_{3,5} \rightarrow  L_{5} } \\ 
(3, 0, 0, 2, 0, 0, 0, 0, 0, 0)\ = & \ \lambda_{ P_{5,6,7,8} \rightarrow  L_{5} } \\ 
(0, 0, 0, 0, 0, 0, 0, 0, 0, 0)\ = & \ \lambda_{ P_{5,10} \rightarrow  L_{5} } \\ 
\end{aligned}
\hspace{1.5cm}
\begin{aligned}[t] 
(0, 0, 0, 0, 0, 0, 0, 0, 0, 0)\ = & \ \lambda_{ P_{1,2,6} \rightarrow  L_{6} } \\ 
(3, 2, 0, 0, 0, 0, 0, 0, 0, 0)\ = & \ \lambda_{ P_{3,6,10} \rightarrow  L_{6} } \\ 
(0, 0, 0, 0, 0, 0, 0, 0, 0, 0)\ = & \ \lambda_{ P_{4,6} \rightarrow  L_{6} } \\ 
(2, 3, 0, 0, 0, 0, 0, 0, 0, 0)\ = & \ \lambda_{ P_{5,6,7,8} \rightarrow  L_{6} } \\ 
(0, 0, 0, 0, 0, 0, 0, 0, 0, 0)\ = & \ \lambda_{ P_{6,9} \rightarrow  L_{6} } \\ 
 \\  
(0, 3, 2, 2, 3, 2, 0, 0, 3, 0)\ = & \ \lambda_{ P_{1,7} \rightarrow  L_{7} } \\ 
(0, 0, 0, 3, 2, 3, 0, 0, 2, 0)\ = & \ \lambda_{ P_{2,3,7} \rightarrow  L_{7} } \\ 
(0, 0, 0, 0, 0, 0, 0, 0, 0, 0)\ = & \ \lambda_{ P_{4,7,9} \rightarrow  L_{7} } \\ 
(0, 2, 3, 0, 0, 0, 0, 0, 0, 0)\ = & \ \lambda_{ P_{5,6,7,8} \rightarrow  L_{7} } \\ 
(0, 0, 0, 0, 0, 0, 0, 0, 0, 0)\ = & \ \lambda_{ P_{7,10} \rightarrow  L_{7} } \\ 
 \\  
(0, 0, 0, 0, 2, 3, 0, 0, 0, 0)\ = & \ \lambda_{ P_{1,8,10} \rightarrow  L_{8} } \\ 
(2, 0, 3, 2, 0, 3, 2, 0, 0, 3)\ = & \ \lambda_{ P_{2,8} \rightarrow  L_{8} } \\ 
(3, 0, 0, 0, 3, 4, 3, 0, 0, 2)\ = & \ \lambda_{ P_{3,4,8} \rightarrow  L_{8} } \\ 
(0, 0, 2, 3, 0, 0, 0, 0, 0, 0)\ = & \ \lambda_{ P_{5,6,7,8} \rightarrow  L_{8} } \\ 
(0, 0, 0, 0, 0, 0, 0, 0, 0, 0)\ = & \ \lambda_{ P_{8,9} \rightarrow  L_{8} } \\ 
 \\  
(0, 0, 0, 1, 0, 0, 4, 0, 0, 0)\ = & \ \lambda_{ P_{1,3,9} \rightarrow  L_{9} } \\ 
(0, 0, 0, 4, 0, 0, 1, 0, 0, 0)\ = & \ \lambda_{ P_{2,5,9} \rightarrow  L_{9} } \\ 
(0, 0, 0, 0, 0, 0, 0, 0, 0, 0)\ = & \ \lambda_{ P_{4,7,9} \rightarrow  L_{9} } \\ 
(0, 0, 0, 0, 0, 0, 0, 0, 0, 0)\ = & \ \lambda_{ P_{6,9} \rightarrow  L_{9} } \\ 
(0, 0, 0, 0, 0, 0, 0, 0, 0, 0)\ = & \ \lambda_{ P_{8,9} \rightarrow  L_{9} } \\ 
(0, 0, 0, 0, 0, 0, 0, 0, 0, 0)\ = & \ \lambda_{ P_{9,10} \rightarrow  L_{9} } \\ 
 \\  
(0, 2, 0, 3, 0, 0, 0, 0, 0, 0)\ = & \ \lambda_{ P_{1,8,10} \rightarrow  L_{10} } \\ 
(3, 0, 0, 0, 0, 0, 0, 2, 0, 0)\ = & \ \lambda_{ P_{2,4,10} \rightarrow  L_{10} } \\ 
(2, 3, 0, 2, 0, 0, 0, 3, 0, 0)\ = & \ \lambda_{ P_{3,6,10} \rightarrow  L_{10} } \\ 
(0, 0, 0, 0, 0, 0, 0, 0, 0, 0)\ = & \ \lambda_{ P_{5,10} \rightarrow  L_{10} } \\ 
(0, 0, 0, 0, 0, 0, 0, 0, 0, 0)\ = & \ \lambda_{ P_{7,10} \rightarrow  L_{10} } \\ 
(0, 0, 0, 0, 0, 0, 0, 0, 0, 0)\ = & \ \lambda_{ P_{9,10} \rightarrow  L_{10} } \\ 
\end{aligned}
\end{equation*}

\newpage
\subsection{The upper linking numbers of $\M_1$}\mbox{}

The values of the upper linking numbers of the braid $B_{\PtoL}$ are given below. The values denoted with a $=^*$ symbol have been simplified using the property $\sum_{L\in\A} m_L = 0$.

\begin{equation*}
\begin{aligned}[t]
& \ulk( B_{ P_{1,2,6} \rightarrow  L_{1} } ) =  m_{8} \\
& \ulk( B_{ P_{1,3,9} \rightarrow  L_{1} } ) =  m_{8} \\
& \ulk( B_{ P_{1,4,5} \rightarrow  L_{1} } ) =  m_{2} + m_{6} + m_{7} + m_{8} \\
& \ulk( B_{ P_{1,7} \rightarrow  L_{1} } ) =  m_{2} + m_{6} + m_{8} \\
& \ulk( B_{ P_{1,8,10} \rightarrow  L_{1} } ) =  0  \\
 \\  
& \ulk( B_{ P_{1,2,6} \rightarrow  L_{2} } ) =  m_{8} \\
& \ulk( B_{ P_{2,3,7} \rightarrow  L_{2} } ) =  m_{1} + m_{6} + m_{8} \\
& \ulk( B_{ P_{2,4,10} \rightarrow  L_{2} } ) =  m_{1} + m_{5} + m_{6} + m_{7} + m_{8} \\
& \ulk( B_{ P_{2,5,9} \rightarrow  L_{2} } ) =  m_{1} + m_{3} + m_{6} + m_{7} + m_{8} \\
& \ulk( B_{ P_{2,8} \rightarrow  L_{2} } ) =  0  \\
 \\  
& \ulk( B_{ P_{1,3,9} \rightarrow  L_{3} } ) =  m_{4} + m_{8} \\
& \ulk( B_{ P_{2,3,7} \rightarrow  L_{3} } ) =^* - m_2 - m_3 - m_5 - m_7 \\
% =  m_{1} + m_{4} + m_{6} + m_{8} + m_{9} + m_{10} \\
& \ulk( B_{ P_{3,4,8} \rightarrow  L_{3} } ) =  0  \\
& \ulk( B_{ P_{3,5} \rightarrow  L_{3} } ) =^* - m_3 - m_5 \\
% =  m_{1} + m_{2} + m_{4} + m_{6} + m_{7} + m_{8} + m_{9} + m_{10} \\
& \ulk( B_{ P_{3,6,10} \rightarrow  L_{3} } ) =  m_{1} + m_{2} + m_{4} + m_{8} + m_{9} \\
 \\  
& \ulk( B_{ P_{1,4,5} \rightarrow  L_{4} } ) =  m_{6} + m_{7} + m_{8} + m_{9} \\
& \ulk( B_{ P_{2,4,10} \rightarrow  L_{4} } ) =^* - m_2 - m_3 - m_4 - m_{10} \\
% =  m_{1} + m_{5} + m_{6} + m_{7} + m_{8} + m_{9} \\
& \ulk( B_{ P_{3,4,8} \rightarrow  L_{4} } ) =  0  \\
& \ulk( B_{ P_{4,6} \rightarrow  L_{4} } ) =  m_{8} + m_{9} \\
& \ulk( B_{ P_{4,7,9} \rightarrow  L_{4} } ) =  m_{6} + m_{8} \\
 \\  
& \ulk( B_{ P_{1,4,5} \rightarrow  L_{5} } ) =  m_{6} + m_{7} + m_{8} \\
& \ulk( B_{ P_{2,5,9} \rightarrow  L_{5} } ) =  m_{6} + m_{7} + m_{8} \\
& \ulk( B_{ P_{3,5} \rightarrow  L_{5} } ) =  m_{6} + m_{7} + m_{8} \\
& \ulk( B_{ P_{5,6,7,8} \rightarrow  L_{5} } ) =  0  \\
& \ulk( B_{ P_{5,10} \rightarrow  L_{5} } ) =  m_{6} + m_{7} + m_{8} \\
\end{aligned}
\hspace{0.5cm}
\begin{aligned}[t]
& \ulk( B_{ P_{1,2,6} \rightarrow  L_{6} } ) =  m_{8} \\
& \ulk( B_{ P_{3,6,10} \rightarrow  L_{6} } ) =  m_{8} \\
& \ulk( B_{ P_{4,6} \rightarrow  L_{6} } ) =  m_{8} \\
& \ulk( B_{ P_{5,6,7,8} \rightarrow  L_{6} } ) =  0  \\
& \ulk( B_{ P_{6,9} \rightarrow  L_{6} } ) =  m_{8} \\
 \\  
& \ulk( B_{ P_{1,7} \rightarrow  L_{7} } ) =  m_{6} + m_{8} \\
& \ulk( B_{ P_{2,3,7} \rightarrow  L_{7} } ) =  m_{6} + m_{8} \\
& \ulk( B_{ P_{4,7,9} \rightarrow  L_{7} } ) =  m_{6} + m_{8} \\
& \ulk( B_{ P_{5,6,7,8} \rightarrow  L_{7} } ) =  0  \\
& \ulk( B_{ P_{7,10} \rightarrow  L_{7} } ) =  m_{6} + m_{8} \\
 \\  
& \ulk( B_{ P_{1,8,10} \rightarrow  L_{8} } ) =  0  \\
& \ulk( B_{ P_{2,8} \rightarrow  L_{8} } ) =  0  \\
& \ulk( B_{ P_{3,4,8} \rightarrow  L_{8} } ) =  0  \\
& \ulk( B_{ P_{5,6,7,8} \rightarrow  L_{8} } ) =  0  \\
& \ulk( B_{ P_{8,9} \rightarrow  L_{8} } ) =  0  \\
 \\  
& \ulk( B_{ P_{1,3,9} \rightarrow  L_{9} } ) =  m_{8} \\
& \ulk( B_{ P_{2,5,9} \rightarrow  L_{9} } ) =^* - m_2 - m_5 - m_9 \\
% =  m_{1} + m_{3} + m_{4} + m_{6} + m_{7} + m_{8} + m_{10} \\
& \ulk( B_{ P_{4,7,9} \rightarrow  L_{9} } ) =  m_{6} + m_{8} \\
& \ulk( B_{ P_{6,9} \rightarrow  L_{9} } ) =  m_{8} \\
& \ulk( B_{ P_{8,9} \rightarrow  L_{9} } ) =  0  \\
& \ulk( B_{ P_{9,10} \rightarrow  L_{9} } ) =  m_{1} + m_{4} + m_{6} + m_{7} + m_{8} \\
 \\  
& \ulk( B_{ P_{1,8,10} \rightarrow  L_{10} } ) =  0  \\
& \ulk( B_{ P_{2,4,10} \rightarrow  L_{10} } ) =  m_{1} + m_{5} + m_{6} + m_{7} + m_{8} \\
& \ulk( B_{ P_{3,6,10} \rightarrow  L_{10} } ) =  m_{1} + m_{2} + m_{8} \\
& \ulk( B_{ P_{5,10} \rightarrow  L_{10} } ) =  m_{1} + m_{6} + m_{7} + m_{8} \\
& \ulk( B_{ P_{7,10} \rightarrow  L_{10} } ) =  m_{1} + m_{6} + m_{8} \\
& \ulk( B_{ P_{9,10} \rightarrow  L_{10} } ) =  m_{1} + m_{6} + m_{7} + m_{8} \\
\end{aligned}
\end{equation*}

		\newpage
\section{Examples of arithmetic Zariski pairs with 11 lines}\label{App:examples}

\vfill

The following arrangements admit a non-trivial tensor linking group isomorphic to $\ZZ/5\ZZ$ and such that for any generator $\Lambda_0$ of $\TLG(C_k,\ZZ/5\ZZ)$, we have $\L(\B_k^i,\Lambda_0)\neq 0$. For all of them except the last one, the couple $(\B_k^i,\B_k^j)$ form a Zariski pair as soon as $i+j\not\equiv 0 \mod 5$. The last example is an ordered Zariski pair. To our knowledge, the following list completed with the examples obtained from the odered Zariski pair of Section~\ref{sec:oZP}, is the complet list of all the arithmetic Zariski pairs with 11 lines that can be detected by the loop linking number.

\vfill

\begin{spacing}{1.15}

\subsection{ }

\begin{itemize}[label=--,leftmargin=*]

\item Let $\zeta$ be a fixed root of the polynomial $Z^4 + Z^3 + Z^2 + Z + 1$. For $i\in\{1,2,3,4\}$, we define $\B_\ssnum^i$ by the following equations, where $\alpha=\zeta^i$.
\begin{equation*}
  \begin{array}{l p{0.5cm} l}
    L_{1} : z = 0 &&
    L_{2} : (-\alpha - 1)x + y = 0 \\
    L_{3} : x = 0 &&
    L_{4} : x - z = 0 \\
    L_{5} : (-\alpha^2 - \alpha - 1)x + z = 0 &&
    L_{6} : y = 0 \\
    L_{7} : x + (\alpha^3 + \alpha^2 + \alpha + 1)y = 0 &&
    L_{8} : y - z = 0 \\
    L_{9} : (-\alpha^3 - \alpha^2 - \alpha)x + y + (\alpha^3 + \alpha^2 + \alpha)z = 0 &&
    L_{10} : (-\alpha^3 - \alpha^2 - \alpha)x + y + (\alpha^2 + \alpha)z = 0 \\
    L_{11} : (\alpha)x - y + z = 0 &&
  \end{array}
\end{equation*}
\item The combinatorics shared by the $\B_\ssnum^i$'s is given by:
\begin{align*}
  C_\ssnum=\big\{
  & \{1, 2\}, \{1, 3, 4, 5\}, \{1, 6, 8\}, \{1, 7, 11\}, \{1, 9, 10\}, \{2, 3, 6, 7\}, \{2, 4, 11\}, \{2, 5, 10\}\\
  &  \{2, 8, 9\}, \{3, 8, 11\}, \{3, 9\}, \{3, 10\}, \{4, 6, 9\}, \{4, 7\}, \{4, 8\}, \{4, 10\}, \{5, 6\}, \{5, 7\}\\
  &  \{5, 8\}, \{5, 9, 11\}, \{6, 10\}, \{6, 11\}, \{7, 8, 10\}, \{7, 9\}, \{10, 11\}
  \big\}
\end{align*}
\item The group of automorphisms of $C_\ssnum$ is trivial.

\end{itemize}

\vfill

\subsection{ }

\begin{itemize}[label=--,leftmargin=*]

\item Let $\zeta$ be a fixed root of the polynomial $Z^4 + Z^3 + Z^2 + Z + 1$. For $i\in\{1,2,3,4\}$, we define $\B_\ssnum^i$ by the following equations, where $\alpha=\zeta^i$.
\begin{equation*}
  \begin{array}{l p{0.5cm} l}
    L_{1} : z = 0 &&
    L_{2} : x = 0 \\
    L_{3} : x - z = 0 &&
    L_{4} : x + (-\alpha^3 - \alpha^2)y = 0 \\
    L_{5} : y = 0 &&
    L_{6} : (\alpha)x + y = 0 \\
    L_{7} : y - z = 0 &&
    L_{8} : (-\alpha^3 - \alpha^2 - \alpha - 1)y + z = 0 \\
    L_{9} : (\alpha^2)x + (\alpha)y + z = 0 &&
    L_{10} : (-\alpha^2)x + y + (-\alpha^3 - \alpha^2 - \alpha - 1)z = 0 \\
    L_{11} : (\alpha^2)x - y + z = 0 &&
  \end{array}
\end{equation*}
\item The combinatorics shared by the $\B_\ssnum^i$'s is given by:
\begin{align*}
  C_\ssnum=\big\{
  & \{1, 2, 3\}, \{1, 4\}, \{1, 5, 7, 8\}, \{1, 6, 9\}, \{1, 10, 11\}, \{2, 4, 5, 6\}, \{2, 7, 11\}, \{2, 8\}, \{2, 9, 10\}\\
  &  \{3, 4, 9\}, \{3, 5\}, \{3, 6, 8\}, \{3, 7\}, \{3, 10\}, \{3, 11\}, \{4, 7, 10\}, \{4, 8, 11\}, \{5, 9, 11\}\\
  &  \{5, 10\}, \{6, 7\}, \{6, 10\}, \{6, 11\}, \{7, 9\}, \{8, 9\}, \{8, 10\}
  \big\}
\end{align*}
\item The group of automorphisms of $C_\ssnum$ is trivial.

\end{itemize}

\vfill
\newpage

\subsection{ }

\begin{itemize}[label=--,leftmargin=*]

\item Let $\zeta$ be a fixed root of the polynomial $Z^4 + Z^3 + Z^2 + Z + 1$. For $i\in\{1,2,3,4\}$, we define $\B_\ssnum^i$ by the following equations, where $\alpha=\zeta^i$.
\begin{equation*}
  \begin{array}{l p{0.5cm} l}
    L_{1} : z = 0 &&
    L_{2} : x = 0 \\
    L_{3} : x - z = 0 &&
    L_{4} : y = 0 \\
    L_{5} : y - z = 0 &&
    L_{6} : x + (-\alpha^3 - \alpha^2)y = 0 \\
    L_{7} : -x + (\alpha^3 + \alpha^2)y + z = 0 &&
    L_{8} : (\alpha^2)x - y + z = 0 \\
    L_{9} : (-\alpha^2)x + y + (-\alpha^3 - \alpha^2 - \alpha - 1)z = 0 &&
    L_{10} : (\alpha)x + y = 0 \\
    L_{11} : (\alpha^2)x + (\alpha)y + z = 0 &&
  \end{array}
\end{equation*}
\item The combinatorics shared by the $\B_\ssnum^i$'s is given by:
\begin{align*}
  C_\ssnum=\big\{
  & \{1, 2, 3\}, \{1, 4, 5\}, \{1, 6, 7\}, \{1, 8, 9\}, \{1, 10, 11\}, \{2, 4, 6, 10\}, \{2, 5, 8\}, \{2, 7\}, \{2, 9, 11\}\\
  &  \{3, 4, 7\}, \{3, 5\}, \{3, 6, 11\}, \{3, 8\}, \{3, 9\}, \{3, 10\}, \{4, 8, 11\}, \{4, 9\}, \{5, 6, 9\}, \{5, 7\}\\
  &  \{5, 10\}, \{5, 11\}, \{6, 8\}, \{7, 8, 10\}, \{7, 9\}, \{7, 11\}, \{9, 10\}
  \big\}
\end{align*}
\item The group of automorphisms of $C_\ssnum$ is trivial.

\end{itemize}

\subsection{ }

\begin{itemize}[label=--,leftmargin=*]

\item Let $\zeta$ be a fixed root of the polynomial $Z^4 + Z^3 + Z^2 + Z + 1$. For $i\in\{1,2,3,4\}$, we define $\B_\ssnum^i$ by the following equations, where $\alpha=\zeta^i$.
\begin{equation*}
  \begin{array}{l p{0.5cm} l}
    L_{1} : z = 0 &&
    L_{2} : x = 0 \\
    L_{3} : x - z = 0 &&
    L_{4} : y = 0 \\
    L_{5} : y - z = 0 &&
    L_{6} : (\alpha^3 + \alpha^2 + \alpha + 1)x + y = 0 \\
    L_{7} : -x + (\alpha)y + z = 0 &&
    L_{8} : (-\alpha^2 - \alpha - 1)x - y + z = 0 \\
    L_{9} : (\alpha^2 + \alpha + 1)x + y + (-\alpha^3 - \alpha^2 - \alpha - 1)z = 0 &&
    L_{10} : (\alpha)x + y = 0 \\
    L_{11} : x + (-\alpha^3 - \alpha^2 - \alpha - 1)y + (-\alpha^3 - 1)z = 0 &&
  \end{array}
\end{equation*}
\item The combinatorics shared by the $\B_\ssnum^i$'s is given by:
\begin{align*}
  C_\ssnum=\big\{
  & \{1, 2, 3\}, \{1, 4, 5\}, \{1, 6, 7\}, \{1, 8, 9\}, \{1, 10, 11\}, \{2, 4, 6, 10\}, \{2, 5, 8\}, \{2, 7, 9\}\\
  &  \{2, 11\}, \{3, 4, 7\}, \{3, 5\}, \{3, 6, 11\}, \{3, 8\}, \{3, 9\}, \{3, 10\}, \{4, 8, 11\}, \{4, 9\}, \{5, 6, 9\}\\
  &  \{5, 7\}, \{5, 10\}, \{5, 11\}, \{6, 8\}, \{7, 8, 10\}, \{7, 11\}, \{9, 10\}, \{9, 11\}
  \big\}
\end{align*}
\item The group of automorphisms of $C_\ssnum$ is trivial.

\end{itemize}

\subsection{ }

\begin{itemize}[label=--,leftmargin=*]

\item Let $\zeta$ be a fixed root of the polynomial $Z^4 + 3Z^3 + 4Z^2 + 2Z + 1$. For $i\in\{1,2,3,4\}$, we define $\B_\ssnum^i$ by the following equations, where $\alpha=\zeta^i$.
\begin{equation*}
  \begin{array}{l p{0.5cm} l}
    L_{1} : z = 0 &&
    L_{2} : x = 0 \\
    L_{3} : x - z = 0 &&
    L_{4} : y = 0 \\
    L_{5} : y - z = 0 &&
    L_{6} : (\alpha^3 + 3\alpha^2 + 3\alpha)x - y + z = 0 \\
    L_{7} : (\alpha^2 + 2\alpha + 1)x + (\alpha)y + z = 0 &&
    L_{8} : (\alpha + 1)x + y = 0 \\
    L_{9} : (\alpha + 1)x + y + (-\alpha^3 - 3\alpha^2 - 4\alpha - 2)z = 0 &&
    L_{10} : -x + (\alpha)y + z = 0 \\
    L_{11} : x + (-\alpha)y + (-\alpha^2 - \alpha - 1)z = 0 &&
  \end{array}
\end{equation*}
\item The combinatorics shared by the $\B_\ssnum^i$'s is given by:
\begin{align*}
  C_\ssnum=\big\{
  & \{1, 2, 3\}, \{1, 4, 5\}, \{1, 6, 7\}, \{1, 8, 9\}, \{1, 10, 11\}, \{2, 4, 8\}, \{2, 5, 6\}, \{2, 7, 9, 10\}\\
  &  \{2, 11\}, \{3, 4, 10\}, \{3, 5\}, \{3, 6, 9\}, \{3, 7\}, \{3, 8, 11\}, \{4, 6, 11\}, \{4, 7\}, \{4, 9\}, \{5, 7, 8\}\\
  &  \{5, 9\}, \{5, 10\}, \{5, 11\}, \{6, 8\}, \{6, 10\}, \{7, 11\}, \{8, 10\}, \{9, 11\}
  \big\}
\end{align*}
\item The group of automorphisms of $C_\ssnum$ is trivial.

\end{itemize}

\subsection{ }

\begin{itemize}[label=--,leftmargin=*]

\item Let $\zeta$ be a fixed root of the polynomial $Z^4 + 3Z^3 + 4Z^2 + 2Z + 1$. For $i\in\{1,2,3,4\}$, we define $\B_\ssnum^i$ by the following equations, where $\alpha=\zeta^i$.
\begin{equation*}
  \begin{array}{l p{0.5cm} l}
    L_{1} : z = 0 &&
    L_{2} : x = 0 \\
    L_{3} : x - z = 0 &&
    L_{4} : y = 0 \\
    L_{5} : y - z = 0 &&
    L_{6} : -x + (\alpha)y + z = 0 \\
    L_{7} : (-\alpha^3 - 3\alpha^2 - 3\alpha - 1)x + (-\alpha^2 - \alpha - 1)y + z = 0 &&
    L_{8} : (\alpha^3 + 2\alpha^2 + \alpha - 1)x + y = 0 \\
    L_{9} : (-\alpha^3 - 3\alpha^2 - 3\alpha - 1)x + (\alpha)y + z = 0 &&
    L_{10} : x + (\alpha^3 + 3\alpha^2 + 3\alpha + 1)y = 0 \\
    L_{11} : (\alpha)x + (-\alpha^2 - \alpha - 1)y + z = 0 &&
  \end{array}
\end{equation*}
\item The combinatorics shared by the $\B_\ssnum^i$'s is given by:
\begin{align*}
  C_\ssnum=\big\{
  & \{1, 2, 3\}, \{1, 4, 5\}, \{1, 6, 7\}, \{1, 8, 9\}, \{1, 10, 11\}, \{2, 4, 8, 10\}, \{2, 5\}, \{2, 6, 9\}, \{2, 7, 11\}\\
  &  \{3, 4, 6\}, \{3, 5\}, \{3, 7\}, \{3, 8, 11\}, \{3, 9\}, \{3, 10\}, \{4, 7, 9\}, \{4, 11\}, \{5, 6, 11\}, \{5, 7\}\\
  &  \{5, 8\}, \{5, 9, 10\}, \{6, 8\}, \{6, 10\}, \{7, 8\}, \{7, 10\}, \{9, 11\}
  \big\}
\end{align*}
\item The group of automorphisms of $C_\ssnum$ is trivial.

\end{itemize}

\subsection{ }

\begin{itemize}[label=--,leftmargin=*]

\item Let $\zeta$ be a fixed root of the polynomial $Z^4 + Z^3 + Z^2 + Z + 1$. For $i\in\{1,2,3,4\}$, we define $\B_\ssnum^i$ by the following equations, where $\alpha=\zeta^i$.
\begin{equation*}
  \begin{array}{l p{0.5cm} l}
    L_{1} : z = 0 &&
    L_{2} : x = 0 \\
    L_{3} : x - z = 0 &&
    L_{4} : y = 0 \\
    L_{5} : y - z = 0 &&
    L_{6} : (-\alpha^2)x + y + (\alpha^3 + \alpha^2)z = 0 \\
    L_{7} : -x + (\alpha^3)y + z = 0 &&
    L_{8} : (\alpha^3)x + y = 0 \\
    L_{9} : (\alpha)x + (\alpha^3)y + z = 0 &&
    L_{10} : x + (-\alpha^3 - 1)y = 0 \\
    L_{11} : (\alpha)x + (\alpha^3 + \alpha^2 + 1)y + z = 0 &&
  \end{array}
\end{equation*}
\item The combinatorics shared by the $\B_\ssnum^i$'s is given by:
\begin{align*}
  C_\ssnum=\big\{
  & \{1, 2, 3\}, \{1, 4, 5\}, \{1, 6, 7\}, \{1, 8, 9\}, \{1, 10, 11\}, \{2, 4, 8, 10\}, \{2, 5\}, \{2, 6, 11\}, \{2, 7, 9\}\\
  &  \{3, 4, 7\}, \{3, 5\}, \{3, 6, 8\}, \{3, 9\}, \{3, 10\}, \{3, 11\}, \{4, 6\}, \{4, 9, 11\}, \{5, 6, 9\}, \{5, 7, 10\}\\
  &  \{5, 8\}, \{5, 11\}, \{6, 10\}, \{7, 8\}, \{7, 11\}, \{8, 11\}, \{9, 10\}
  \big\}
\end{align*}
\item The group of automorphisms of $C_\ssnum$ is trivial.

\end{itemize}

\subsection{ }

\begin{itemize}[label=--,leftmargin=*]

\item Let $\zeta$ be a fixed root of the polynomial $Z^4 - Z^3 + Z^2 - Z + 1$. For $i\in\{1,2,3,4\}$, we define $\B_\ssnum^i$ by the following equations, where $\alpha=\zeta^i$.
\begin{equation*}
  \begin{array}{l p{0.5cm} l}
    L_{1} : z = 0 &&
    L_{2} : x = 0 \\
    L_{3} : x - z = 0 &&
    L_{4} : y = 0 \\
    L_{5} : y - z = 0 &&
    L_{6} : (-\alpha^2 - 1)x + (\alpha^2)y + z = 0 \\
    L_{7} : (\alpha^3 - 1)x + y + (-\alpha^3 + \alpha^2 - \alpha + 1)z = 0 &&
    L_{8} : x + (-\alpha)y = 0 \\
    L_{9} : -x + (\alpha)y + z = 0 &&
    L_{10} : x + (-\alpha^3 + \alpha^2 - \alpha)y = 0 \\
    L_{11} : (\alpha^3 - \alpha^2 - 1)x + (\alpha^2)y + z = 0 &&
  \end{array}
\end{equation*}
\item The combinatorics shared by the $\B_\ssnum^i$'s is given by:
\begin{align*}
  C_\ssnum=\big\{
  & \{1, 2, 3\}, \{1, 4, 5\}, \{1, 6, 7\}, \{1, 8, 9\}, \{1, 10, 11\}, \{2, 4, 8, 10\}, \{2, 5\}, \{2, 6, 11\}, \{2, 7, 9\}\\
  &  \{3, 4, 9\}, \{3, 5, 6\}, \{3, 7\}, \{3, 8\}, \{3, 10\}, \{3, 11\}, \{4, 6\}, \{4, 7, 11\}, \{5, 7\}, \{5, 8, 11\}\\
  &  \{5, 9\}, \{5, 10\}, \{6, 8\}, \{6, 9, 10\}, \{7, 8\}, \{7, 10\}, \{9, 11\}
  \big\}
\end{align*}
\item The group of automorphisms of $C_\ssnum$ is trivial.

\end{itemize}

\subsection{ }

\begin{itemize}[label=--,leftmargin=*]

\item Let $\zeta$ be a fixed root of the polynomial $Z^4 + 3Z^3 + 4Z^2 + 2Z + 1$. For $i\in\{1,2,3,4\}$, we define $\B_\ssnum^i$ by the following equations, where $\alpha=\zeta^i$.
\begin{equation*}
  \begin{array}{l p{0.5cm} l}
    L_{1} : z = 0 &&
    L_{2} : x = 0 \\
    L_{3} : x - z = 0 &&
    L_{4} : y = 0 \\
    L_{5} : y - z = 0 &&
    L_{6} : (\alpha^2+\alpha+1)x - (\alpha) y + (\alpha)z = 0 \\ %(-\alpha^3 - 3\alpha^2 - 3\alpha - 1)x - y + z = 0 \\
    L_{7} : -x + (\alpha^2 + 2\alpha + 1)y + z = 0 &&
    L_{8} : (-\alpha^2 + 1)x + (\alpha)y = 0 \\ %(2\alpha^3 + 6\alpha^2 + 7\alpha + 2)x + y = 0 \\
    L_{9} : (-\alpha^2 - 2\alpha - 2)x + (\alpha)y + z = 0 &&
    L_{10} : x + (\alpha^3 + 2\alpha^2 + \alpha)y = 0 \\
    L_{11} : (\alpha^2 + \alpha + 1)x + (\alpha^2)y + (\alpha)z = 0 && % (-\alpha^3 - 3\alpha^2 - 3\alpha - 1)x + (\alpha)y + z = 0 &&
  \end{array}
\end{equation*}
\item The combinatorics shared by the $\B_\ssnum^i$'s is given by:
\begin{align*}
  C_\ssnum=\big\{
  & \{1, 2, 3\}, \{1, 4, 5\}, \{1, 6, 7\}, \{1, 8, 9\}, \{1, 10, 11\}, \{2, 4, 8, 10\}, \{2, 5, 6\}, \{2, 7\}, \{2, 9, 11\}\\
  &  \{3, 4, 7\}, \{3, 5\}, \{3, 6, 9\}, \{3, 8\}, \{3, 10\}, \{3, 11\}, \{4, 6, 11\}, \{4, 9\}, \{5, 7\}, \{5, 8\}\\
  &  \{5, 9, 10\}, \{5, 11\}, \{6, 8\}, \{6, 10\}, \{7, 8, 11\}, \{7, 9\}, \{7, 10\}
  \big\}
\end{align*}
\item The group of automorphisms of $C_\ssnum$ is trivial.

\end{itemize}

\subsection{ }

\begin{itemize}[label=--,leftmargin=*]

\item Let $\zeta$ be a fixed root of the polynomial $Z^4 + Z^3 + Z^2 + Z + 1$. For $i\in\{1,2,3,4\}$, we define $\B_\ssnum^i$ by the following equations, where $\alpha=\zeta^i$.
\begin{equation*}
  \begin{array}{l p{0.5cm} l}
    L_{1} : z = 0 &&
    L_{2} : x = 0 \\
    L_{3} : x - z = 0 &&
    L_{4} : y = 0 \\
    L_{5} : y - z = 0 &&

    L_{6} : (\alpha)x - y + z = 0 \\
    L_{7} : -x + (-\alpha^3 - \alpha^2 - \alpha - 1)y + z = 0 &&
    L_{8} : x + (\alpha^3 + \alpha^2 + \alpha)y = 0 \\
    L_{9} : x + (\alpha^3 + \alpha^2 + \alpha)y + (-\alpha^3 - \alpha^2)z = 0 &&
    L_{10} : x + (\alpha)y = 0 \\
    L_{11} : (\alpha)x + (\alpha^2)y + z = 0 &&
  \end{array}
\end{equation*}
\item The combinatorics shared by the $\B_\ssnum^i$'s is given by:
\begin{align*}
  C_\ssnum=\big\{
  & \{1, 2, 3\}, \{1, 4, 5\}, \{1, 6, 7\}, \{1, 8, 9\}, \{1, 10, 11\}, \{2, 4, 8, 10\}, \{2, 5, 6\}, \{2, 7\}, \{2, 9, 11\}\\
  &  \{3, 4, 7\}, \{3, 5\}, \{3, 6, 9\}, \{3, 8\}, \{3, 10\}, \{3, 11\}, \{4, 6, 11\}, \{4, 9\}, \{5, 7, 8\}, \{5, 9, 10\}\\
  &  \{5, 11\}, \{6, 8\}, \{6, 10\}, \{7, 9\}, \{7, 10\}, \{7, 11\}, \{8, 11\}
  \big\}
\end{align*}
\item The group of automorphisms of $C_\ssnum$ is trivial.

\end{itemize}

\subsection{ }

\begin{itemize}[label=--,leftmargin=*]

\item Let $\zeta$ be a fixed root of the polynomial $Z^4 + 3Z^3 + 4Z^2 + 2Z + 1$. For $i\in\{1,2,3,4\}$, we define $\B_\ssnum^i$ by the following equations, where $\alpha=\zeta^i$.
\begin{equation*}
  \begin{array}{l p{0.5cm} l}
    L_{1} : z = 0 &&
    L_{2} : x = 0 \\
    L_{3} : x - z = 0 &&
    L_{4} : y = 0 \\
    L_{5} : y - z = 0 &&
    L_{6} : (\alpha+1)x + (\alpha)y - (\alpha)z = 0\\ % (\alpha^3 + 3\alpha^2 + 4\alpha + 1)x - y + z = 0 \\
    L_{7} : (\alpha+1)x + (\alpha)y - (3\alpha+1)z = 0&& %(-\alpha^3 - 3\alpha^2 - 4\alpha - 1)x + y + (\alpha^3 + 3\alpha^2 + 4\alpha + 1)z = 0 &&
    L_{8} : x + (\alpha^3 + 2\alpha^2 + 2\alpha)y = 0 \\
    L_{9} : (\alpha)x + (-\alpha^3 - 2\alpha^2 - 2\alpha - 1)y + z = 0 &&
    L_{10} : (-\alpha^3 - 2\alpha^2 - 2\alpha)x + y = 0 \\
    L_{11} : (-\alpha^3 - 2\alpha^2 - 2\alpha)x + y + (-\alpha^2 - 2\alpha - 2)z = 0 &&
  \end{array}
\end{equation*}
\item The combinatorics shared by the $\B_\ssnum^i$'s is given by:
\begin{align*}
  C_\ssnum=\big\{
  & \{1, 2, 3\}, \{1, 4, 5\}, \{1, 6, 7\}, \{1, 8, 9\}, \{1, 10, 11\}, \{2, 4, 8, 10\}, \{2, 5, 6\}, \{2, 7, 9\}\\
  &  \{2, 11\}, \{3, 4, 7\}, \{3, 5\}, \{3, 6, 11\}, \{3, 8\}, \{3, 9\}, \{3, 10\}, \{4, 6\}, \{4, 9, 11\}, \{5, 7, 8\}\\
  &  \{5, 9\}, \{5, 10\}, \{5, 11\}, \{6, 8\}, \{6, 9, 10\}, \{7, 10\}, \{7, 11\}, \{8, 11\}
  \big\}
\end{align*}
\item The group of automorphisms of $C_\ssnum$ is trivial.

\end{itemize}

\subsection{ }

\begin{itemize}[label=--,leftmargin=*]

\item Let $\zeta$ be a fixed root of the polynomial $Z^4 - 2Z^3 + 4Z^2 - 3Z + 1$. For $i\in\{1,2,3,4\}$, we define $\B_\ssnum^i$ by the following equations, where $\alpha=\zeta^i$.
\begin{equation*}
  \begin{array}{l p{0.5cm} l}
    L_{1} : z = 0 &&
    L_{2} : x = 0 \\
    L_{3} : x - z = 0 &&
    L_{4} : y = 0 \\
    L_{5} : y - z = 0 &&
    L_{6} : -x - y + z = 0 \\
    L_{7} : x + y + (\alpha^3 - 2\alpha^2 + 4\alpha - 3)z = 0 &&
    L_{8} : (-\alpha)x - y + z = 0 \\
    L_{9} : (\alpha)x + y + (\alpha^3 - 2\alpha^2 + 3\alpha - 2)z = 0 &&
    L_{10} : x + (\alpha^3 - \alpha^2 + 2\alpha)y = 0 \\
    L_{11} : -x + (-\alpha^3 + \alpha^2 - 2\alpha)y + z = 0 &&
  \end{array}
\end{equation*}
\item The combinatorics shared by the $\B_\ssnum^i$'s is given by:
\begin{align*}
  C_\ssnum=\big\{
  & \{1, 2, 3\}, \{1, 4, 5\}, \{1, 6, 7\}, \{1, 8, 9\}, \{1, 10, 11\}, \{2, 4, 10\}, \{2, 5, 6, 8\}, \{2, 7\}, \{2, 9, 11\}\\
  &  \{3, 4, 6, 11\}, \{3, 5\}, \{3, 7, 9\}, \{3, 8\}, \{3, 10\}, \{4, 7, 8\}, \{4, 9\}, \{5, 7\}, \{5, 9, 10\}, \{5, 11\}\\
  &  \{6, 9\}, \{6, 10\}, \{7, 10\}, \{7, 11\}, \{8, 10\}, \{8, 11\}
  \big\}
\end{align*}
\item The group of automorphisms of $C_\ssnum$ is trivial.

\end{itemize}

\subsection{ }

\begin{itemize}[label=--,leftmargin=*]

\item Let $\zeta$ be a fixed root of the polynomial $Z^4 + 2Z^3 + 4Z^2 + 3Z + 1$. For $i\in\{1,2,3,4\}$, we define $\B_\ssnum^i$ by the following equations, where $\alpha=\zeta^i$.
\begin{equation*}
  \begin{array}{l p{0.5cm} l}
    L_{1} : z = 0 &&
    L_{2} : x = 0 \\
    L_{3} : x - z = 0 &&
    L_{4} : y = 0 \\
    L_{5} : y - z = 0 &&
    L_{6} : (-\alpha - 1)x + (\alpha^2 + \alpha)y + z = 0 \\
    L_{7} : -x + (\alpha)y + z = 0 &&
    L_{8} : (-\alpha - 1)x + (\alpha)y + z = 0 \\
    L_{9} : (-\alpha^3 - 2\alpha^2 - 4\alpha - 2)x - y + z = 0 &&
    L_{10} : x + (-\alpha^3 - 2\alpha^2 - 4\alpha - 2)y = 0 \\
    L_{11} : (-\alpha^3 - \alpha^2 - 3\alpha - 1)x - y + z = 0 &&
  \end{array}
\end{equation*}
\item The combinatorics shared by the $\B_\ssnum^i$'s is given by:
\begin{align*}
  C_\ssnum=\big\{
  & \{1, 2, 3\}, \{1, 4, 5\}, \{1, 6, 7\}, \{1, 8, 9\}, \{1, 10, 11\}, \{2, 4, 10\}, \{2, 5, 9, 11\}, \{2, 6\}, \{2, 7, 8\}\\
  &  \{3, 4, 7\}, \{3, 5, 8\}, \{3, 6, 11\}, \{3, 9\}, \{3, 10\}, \{4, 6, 8\}, \{4, 9\}, \{4, 11\}, \{5, 6\}, \{5, 7\}\\
  &  \{5, 10\}, \{6, 9, 10\}, \{7, 9\}, \{7, 10\}, \{7, 11\}, \{8, 10\}, \{8, 11\}
  \big\}
\end{align*}
\item The group of automorphisms of $C_\ssnum$ is trivial.

\end{itemize}

\subsection{ }

\begin{itemize}[label=--,leftmargin=*]

\item Let $\zeta$ be a fixed root of the polynomial $Z^4 + Z^3 + Z^2 + Z + 1$. For $i\in\{1,2,3,4\}$, we define $\B_\ssnum^i$ by the following equations, where $\alpha=\zeta^i$.
\begin{equation*}
  \begin{array}{l p{0.5cm} l}
    L_{1} : z = 0 &&
    L_{2} : x = 0 \\
    L_{3} : x - z = 0 &&
    L_{4} : y = 0 \\
    L_{5} : y - z = 0 &&
    L_{6} : x + (-\alpha)y = 0 \\
    L_{7} : -x + (\alpha)y + z = 0 &&
    L_{8} : (-\alpha^2 - \alpha - 1)x + (\alpha^3 + \alpha^2 + \alpha)y + z = 0 \\
    L_{9} : (-\alpha^2 - \alpha - 1)x - y + z = 0 &&
    L_{10} : (\alpha^3 + \alpha^2 + \alpha)x + (\alpha)y + z = 0 \\
    L_{11} : (\alpha^2 + \alpha)x + y = 0 &&
  \end{array}
\end{equation*}
\item The combinatorics shared by the $\B_\ssnum^i$'s is given by:
\begin{align*}
  C_\ssnum=\big\{
  & \{1, 2, 3\}, \{1, 4, 5\}, \{1, 6, 7, 8\}, \{1, 9, 10\}, \{1, 11\}, \{2, 4, 6, 11\}, \{2, 5, 9\}, \{2, 7, 10\}\\
  &  \{2, 8\}, \{3, 4, 7\}, \{3, 5\}, \{3, 6\}, \{3, 8\}, \{3, 9, 11\}, \{3, 10\}, \{4, 8, 9\}, \{4, 10\}, \{5, 6, 10\}\\
  &  \{5, 7\}, \{5, 8\}, \{5, 11\}, \{6, 9\}, \{7, 9\}, \{7, 11\}, \{8, 10, 11\}
  \big\}
\end{align*}
\item The group of automorphisms of $C_\ssnum$ is trivial.

\end{itemize}

\subsection{ }

\begin{itemize}[label=--,leftmargin=*]

\item Let $\zeta$ be a fixed root of the polynomial $Z^4 + Z^3 + Z^2 + Z + 1$. For $i\in\{1,2,3,4\}$, we define $\B_\ssnum^i$ by the following equations, where $\alpha=\zeta^i$.
\begin{equation*}
  \begin{array}{l p{0.5cm} l}
    L_{1} : z = 0 &&
    L_{2} : x = 0 \\
    L_{3} : x - z = 0 &&
    L_{4} : y = 0 \\
    L_{5} : y - z = 0 &&
    L_{6} : (\alpha^3)x + y = 0 \\
    L_{7} : -x + (\alpha)y + z = 0 &&
    L_{8} : x +(\alpha^2)y + (\alpha)z = 0 \\ %(-\alpha^3 - \alpha^2 - \alpha - 1)x + (\alpha)y + z = 0 \\
    L_{9} : x - (\alpha)y + (\alpha)z = 0 && %(-\alpha^3 - \alpha^2 - \alpha - 1)x - y + z = 0 &&
    L_{10} : (\alpha^3)x + y + (-\alpha^3 - 1)z = 0 \\
    L_{11} : (-\alpha^3 - 1)x - y + z = 0 &&
  \end{array}
\end{equation*}
\item The combinatorics shared by the $\B_\ssnum^i$'s is given by:
\begin{align*}
  C_\ssnum=\big\{
  & \{1, 2, 3\}, \{1, 4, 5\}, \{1, 6, 8, 10\}, \{1, 7, 9\}, \{1, 11\}, \{2, 4, 6\}, \{2, 5, 9, 11\}, \{2, 7, 8\}\\
  &  \{2, 10\}, \{3, 4, 7\}, \{3, 5, 10\}, \{3, 6, 11\}, \{3, 8\}, \{3, 9\}, \{4, 8, 9\}, \{4, 10\}, \{4, 11\}, \{5, 6\}\\
  &  \{5, 7\}, \{5, 8\}, \{6, 7\}, \{6, 9\}, \{7, 10, 11\}, \{8, 11\}, \{9, 10\}
  \big\}
\end{align*}
\item The group of automorphisms of $C_\ssnum$ is generated by:  $\sigma_1 = (1, 2)(5, 6)(8, 9)(10, 11)$.

\end{itemize}

\subsection{ }

\begin{itemize}[label=--,leftmargin=*]

\item Let $\zeta$ be a fixed root of the polynomial $Z^4 + 2Z^3 + 4Z^2 + 3Z + 1$. For $i\in\{1,2,3,4\}$, we define $\B_\ssnum^i$ by the following equations, where $\alpha=\zeta^i$.
\begin{equation*}
  \begin{array}{l p{0.5cm} l}
    L_{1} : z = 0 &&
    L_{2} : x = 0 \\
    L_{3} : x - z = 0 &&
    L_{4} : y = 0 \\
    L_{5} : y - z = 0 &&
    L_{6} : x + (-\alpha^3 - \alpha^2 - 2\alpha - 1)y = 0 \\
    L_{7} : (-\alpha^2 - \alpha - 2)x - y + z = 0 &&
    L_{8} : (\alpha^2+2\alpha+1)x - (\alpha^2+\alpha)y + (\alpha^2)z = 0 \\ %(\alpha^3 + \alpha^2 + 2\alpha)x + (\alpha^3 + 2\alpha^2 + 4\alpha + 2)y + z = 0 \\
    L_{9} : (\alpha+1)x - (\alpha)y - z = 0 && %(\alpha^3 + 2\alpha^2 + 4\alpha + 2)x + y + (-\alpha^3 - 2\alpha^2 - 4\alpha - 3)z = 0 &&
    L_{10} : (\alpha^3+\alpha^2+2\alpha)x + (\alpha+1)y - (\alpha)z = 0 \\ %(-\alpha^2 - \alpha - 2)x + (\alpha^3 + 2\alpha^2 + 4\alpha + 2)y + z = 0 \\
    L_{11} : (\alpha^3 + \alpha^2 + 2\alpha)x + (\alpha)y + z = 0 &&
  \end{array}
\end{equation*}
\item The combinatorics shared by the $\B_\ssnum^i$'s is given by:
\begin{align*}
  C_\ssnum=\big\{
  & \{1, 2, 3\}, \{1, 4, 5\}, \{1, 6, 10\}, \{1, 7, 11\}, \{1, 8, 9\}, \{2, 4, 6\}, \{2, 5, 7\}, \{2, 8, 10\}, \{2, 9, 11\}\\
  &  \{3, 4\}, \{3, 5, 9\}, \{3, 6\}, \{3, 7, 8\}, \{3, 10\}, \{3, 11\}, \{4, 7, 10\}, \{4, 8, 11\}, \{4, 9\}, \{5, 6, 11\}\\
  &  \{5, 8\}, \{5, 10\}, \{6, 7\}, \{6, 8\}, \{6, 9\}, \{7, 9\}, \{9, 10\}, \{10, 11\}
  \big\}
\end{align*}
\item The group of automorphisms of $C_\ssnum$ is trivial.

\end{itemize}

\subsection{ }

\begin{itemize}[label=--,leftmargin=*]

\item Let $\zeta$ be a fixed root of the polynomial $Z^4 + Z^3 + Z^2 + Z + 1$. For $i\in\{1,2,3,4\}$, we define $\B_\ssnum^i$ by the following equations, where $\alpha=\zeta^i$.
\begin{equation*}
  \begin{array}{l p{0.5cm} l}
    L_{1} : z = 0 &&
    L_{2} : x = 0 \\
    L_{3} : x - z = 0 &&
    L_{4} : y = 0 \\
    L_{5} : y - z = 0 &&
    L_{6} : (\alpha)y + z = 0\\ %y + (-\alpha^3 - \alpha^2 - \alpha - 1)z = 0 \\
    L_{7} : x - (\alpha^3 + \alpha^2)y = 0 &&
    L_{8} : (\alpha^3 + \alpha)x - y + z = 0 \\
    L_{9} : -x - y + z = 0 &&
    L_{10} : (\alpha)x + (\alpha^2+\alpha)y + z = 0\\ %x + (\alpha + 1)y + (-\alpha^3 - \alpha^2 - \alpha - 1)z = 0 \\
    L_{11} : (\alpha)x + (\alpha)y + z = 0 &&
  \end{array}
\end{equation*}
\item The combinatorics shared by the $\B_\ssnum^i$'s is given by:
\begin{align*}
  C_\ssnum=\big\{
  & \{1, 2, 3\}, \{1, 4, 5, 6\}, \{1, 7\}, \{1, 8, 10\}, \{1, 9, 11\}, \{2, 4, 7\}, \{2, 5, 8, 9\}, \{2, 6, 11\}\\
  &  \{2, 10\}, \{3, 4, 9\}, \{3, 5\}, \{3, 6, 10\}, \{3, 7\}, \{3, 8\}, \{3, 11\}, \{4, 8\}, \{4, 10, 11\}, \{5, 7, 10\}\\
  &  \{5, 11\}, \{6, 7\}, \{6, 8\}, \{6, 9\}, \{7, 8, 11\}, \{7, 9\}, \{9, 10\}
  \big\}
\end{align*}
\item The group of automorphisms of $C_\ssnum$ is trivial.

\end{itemize}

\subsection{ }

\begin{itemize}[label=--,leftmargin=*]

\item Let $\zeta$ be a fixed root of the polynomial $Z^4 + Z^3 + Z^2 + Z + 1$. For $i\in\{1,2,3,4\}$, we define $\B_\ssnum^i$ by the following equations, where $\alpha=\zeta^i$.
\begin{equation*}
  \begin{array}{l p{0.5cm} l}
    L_{1} : z = 0 &&
    L_{2} : x = 0 \\
    L_{3} : x - z = 0 &&
    L_{4} : y = 0 \\
    L_{5} : y - z = 0 &&
    L_{6} : (\alpha^2)y + z = 0 \\
    L_{7} : (\alpha)x - y = 0 && %x + (\alpha^3 + \alpha^2 + \alpha + 1)y = 0 &&
    L_{8} : (\alpha)x - y + z = 0 \\
    L_{9} : x + (\alpha^3)y = 0 &&
    L_{10} : (\alpha^2)x + y + (-\alpha^2 - \alpha - 1)z = 0 \\
    L_{11} : x + (\alpha^3 + 1)y - z = 0 &&
  \end{array}
\end{equation*}
\item The combinatorics shared by the $\B_\ssnum^i$'s is given by:
\begin{align*}
  C_\ssnum=\big\{
  & \{1, 2, 3\}, \{1, 4, 5, 6\}, \{1, 7, 8\}, \{1, 9, 10\}, \{1, 11\}, \{2, 4, 7, 9\}, \{2, 5, 8\}, \{2, 6\}, \{2, 10, 11\}\\
  &  \{3, 4, 11\}, \{3, 5\}, \{3, 6\}, \{3, 7\}, \{3, 8, 10\}, \{3, 9\}, \{4, 8\}, \{4, 10\}, \{5, 7\}, \{5, 9, 11\}\\
  &  \{5, 10\}, \{6, 7, 10\}, \{6, 8, 11\}, \{6, 9\}, \{7, 11\}, \{8, 9\}
  \big\}
\end{align*}
\item The group of automorphisms of $C_\ssnum$ is trivial.

\end{itemize}

\subsection{ }

\begin{itemize}[label=--,leftmargin=*]

\item Let $\zeta$ be a fixed root of the polynomial $Z^4 + 2Z^3 + 4Z^2 + 3Z + 1$. For $i\in\{1,2,3,4\}$, we define $\B_\ssnum^i$ by the following equations, where $\alpha=\zeta^i$.
\begin{equation*}
  \begin{array}{l p{0.5cm} l}
    L_{1} : z = 0 &&
    L_{2} : x = 0 \\
    L_{3} : x - z = 0 &&
    L_{4} : y = 0 \\
    L_{5} : y - z = 0 &&
    L_{6} : (\alpha^3 + 2\alpha^2 + 3\alpha + 2)y - z = 0 \\
    L_{7} : (\alpha)x + y = 0 &&
    L_{8} : (\alpha^2 + \alpha + 1)x - (\alpha^3 + 2\alpha^2 + 3\alpha + 2)y + z = 0 \\
    L_{9} : (\alpha+1)x - (\alpha)y = 0 && %(\alpha^3 + 2\alpha^2 + 4\alpha + 2)x + y = 0 &&
    L_{10} : (\alpha + 1)x - (\alpha)y - z = 0 \\
    L_{11} : (\alpha + 1)x + y - z = 0 &&
  \end{array}
\end{equation*}
\item The combinatorics shared by the $\B_\ssnum^i$'s is given by:
\begin{align*}
  C_\ssnum=\big\{
  & \{1, 2, 3\}, \{1, 4, 5, 6\}, \{1, 7, 8\}, \{1, 9, 10\}, \{1, 11\}, \{2, 4, 7, 9\}, \{2, 5, 11\}, \{2, 6, 8\}\\
  &  \{2, 10\}, \{3, 4\}, \{3, 5, 10\}, \{3, 6\}, \{3, 7, 11\}, \{3, 8\}, \{3, 9\}, \{4, 8\}, \{4, 10, 11\}, \{5, 7\}\\
  &  \{5, 8, 9\}, \{6, 7, 10\}, \{6, 9\}, \{6, 11\}, \{8, 10\}, \{8, 11\}, \{9, 11\}
  \big\}
\end{align*}
\item The group of automorphisms of $C_\ssnum$ is trivial.

\end{itemize}

\subsection{ }

\begin{itemize}[label=--,leftmargin=*]

\item Let $\zeta$ be a fixed root of the polynomial $Z^4 + Z^3 + Z^2 + Z + 1$. For $i\in\{1,2,3,4\}$, we define $\B_\ssnum^i$ by the following equations, where $\alpha=\zeta^i$.
\begin{equation*}
  \begin{array}{l p{0.5cm} l}
    L_{1} : z = 0 &&
    L_{2} : x = 0 \\
    L_{3} : x - z = 0 &&
    L_{4} : y = 0 \\
    L_{5} : y - z = 0 &&
    L_{6} : y + (\alpha^3 + \alpha^2)z = 0 \\
    L_{7} : x - (\alpha^3+\alpha+1)y - z = 0 && %(-\alpha^3 - \alpha^2 - \alpha)x + y + (\alpha^3 + \alpha^2 + \alpha)z = 0 &&
    L_{8} : (-\alpha^3 - \alpha^2 - \alpha)x + y + (\alpha^3 + \alpha^2)z = 0 \\
    L_{9} : (-\alpha)x + y = 0 &&
    L_{10} : (\alpha)x - y + z = 0 \\
    L_{11} : x + (\alpha^3 + \alpha)y = 0 &&
  \end{array}
\end{equation*}
\item The combinatorics shared by the $\B_\ssnum^i$'s is given by:
\begin{align*}
  C_\ssnum=\big\{
  & \{1, 2, 3\}, \{1, 4, 5, 6\}, \{1, 7, 8\}, \{1, 9, 10\}, \{1, 11\}, \{2, 4, 9, 11\}, \{2, 5, 10\}, \{2, 6, 8\}\\
  &  \{2, 7\}, \{3, 4, 7\}, \{3, 5\}, \{3, 6\}, \{3, 8, 9\}, \{3, 10, 11\}, \{4, 8\}, \{4, 10\}, \{5, 7, 11\}, \{5, 8\}\\
  &  \{5, 9\}, \{6, 7, 10\}, \{6, 9\}, \{6, 11\}, \{7, 9\}, \{8, 10\}, \{8, 11\}
  \big\}
\end{align*}
\item The group of automorphisms of $C_\ssnum$ is trivial.

\end{itemize}

\subsection{ }

\begin{itemize}[label=--,leftmargin=*]

\item Let $\zeta$ be a fixed root of the polynomial $Z^4 + Z^3 + Z^2 + Z + 1$. For $i\in\{1,2,3,4\}$, we define $\B_\ssnum^i$ by the following equations, where $\alpha=\zeta^i$.
\begin{equation*}
  \begin{array}{l p{0.5cm} l}
    L_{1} : z = 0 &&
    L_{2} : x = 0 \\
    L_{3} : x - z = 0 &&
    L_{4} : y = 0 \\
    L_{5} : y - z = 0 &&
    L_{6} : (-\alpha^3 - 1)y + z = 0 \\
    L_{7} : (-\alpha^3)x - y + z = 0 &&
    L_{8} : x + (\alpha^2)y + (\alpha)z = 0 \\ %(-\alpha^3 - \alpha^2 - \alpha - 1)x + (\alpha)y + z = 0 \\
    L_{9} : x - (\alpha)y + (\alpha)z = 0 && %(-\alpha^3 - \alpha^2 - \alpha - 1)x - y + z = 0 &&
    L_{10} : -x + (\alpha)y + z = 0 \\
    L_{11} : -x + y = 0 &&
  \end{array}
\end{equation*}
\item The combinatorics shared by the $\B_\ssnum^i$'s is given by:
\begin{align*}
  C_\ssnum=\big\{
  & \{1, 2, 3\}, \{1, 4, 5, 6\}, \{1, 7, 8\}, \{1, 9, 10\}, \{1, 11\}, \{2, 4, 11\}, \{2, 5, 7, 9\}, \{2, 6\}, \{2, 8, 10\}\\
  &  \{3, 4, 10\}, \{3, 5, 11\}, \{3, 6, 8\}, \{3, 7\}, \{3, 9\}, \{4, 7\}, \{4, 8, 9\}, \{5, 8\}, \{5, 10\}, \{6, 7, 11\}\\
  &  \{6, 9\}, \{6, 10\}, \{7, 10\}, \{8, 11\}, \{9, 11\}, \{10, 11\}
  \big\}
\end{align*}
\item The group of automorphisms of $C_\ssnum$ is trivial.

\end{itemize}

\subsection{ }

\begin{itemize}[label=--,leftmargin=*]

\item Let $\zeta$ be a fixed root of the polynomial $Z^4 + Z^3 + Z^2 + Z + 1$. For $i\in\{1,2,3,4\}$, we define $\B_\ssnum^i$ by the following equations, where $\alpha=\zeta^i$.
\begin{equation*}
  \begin{array}{l p{0.5cm} l}
    L_{1} : z = 0 &&
    L_{2} : x = 0 \\
    L_{3} : x - z = 0 &&
    L_{4} : y = 0 \\
    L_{5} : y - z = 0 &&
    L_{6} : x + (\alpha)y = 0 \\ %(-\alpha^3 - \alpha^2 - \alpha - 1)x + y = 0 \\
    L_{7} : y + (-\alpha - 1)z = 0 &&
    L_{8} : (-\alpha^2 - \alpha - 1)x + y = 0 \\
    L_{9} : (\alpha)x + (\alpha^2)y + z = 0 &&
    L_{10} : (\alpha)x - y + z = 0 \\
    L_{11} : (-\alpha^2 - \alpha - 1)x + y + (-\alpha - 1)z = 0 &&
  \end{array}
\end{equation*}
\item The combinatorics shared by the $\B_\ssnum^i$'s is given by:
\begin{align*}
  C_\ssnum=\big\{
  & \{1, 2, 3\}, \{1, 4, 5, 7\}, \{1, 6, 9\}, \{1, 8, 11\}, \{1, 10\}, \{2, 4, 6, 8\}, \{2, 5, 10\}, \{2, 7, 11\}\\
  &  \{2, 9\}, \{3, 4\}, \{3, 5\}, \{3, 6\}, \{3, 7, 10\}, \{3, 8, 9\}, \{3, 11\}, \{4, 9, 10\}, \{4, 11\}, \{5, 6\}\\
  &  \{5, 8\}, \{5, 9, 11\}, \{6, 7\}, \{6, 10, 11\}, \{7, 8\}, \{7, 9\}, \{8, 10\}
  \big\}
\end{align*}
\item The group of automorphisms of $C_\ssnum$ is generated by:  $\sigma_1 = (1, 2)(5, 6)(7, 8)(9, 10)$.

\end{itemize}

\subsection{ }

\begin{itemize}[label=--,leftmargin=*]

\item Let $\zeta$ be a fixed root of the polynomial $Z^4 + Z^3 + Z^2 + Z + 1$. For $i\in\{1,2,3,4\}$, we define $\B_\ssnum^i$ by the following equations, where $\alpha=\zeta^i$.
\begin{equation*}
  \begin{array}{l p{0.5cm} l}
    L_{1} : z = 0 &&
    L_{2} : x = 0 \\
    L_{3} : x - z = 0 &&
    L_{4} : y = 0 \\
    L_{5} : y - z = 0 &&
    L_{6} : (\alpha)x + y = 0 \\ %x + (-\alpha^3 - \alpha^2 - \alpha - 1)y = 0 \\
    L_{7} : y + (-\alpha^2 - 1)z = 0 &&
    L_{8} : (-\alpha^3 - \alpha^2 - 1)x + y = 0 \\
    L_{9} : (-\alpha^3)x + (\alpha)y + z = 0 &&
    L_{10} : (\alpha^2)x + (\alpha)y + z = 0 \\
    L_{11} : (\alpha^2)x - y + z = 0 &&
  \end{array}
\end{equation*}
\item The combinatorics shared by the $\B_\ssnum^i$'s is given by:
\begin{align*}
  C_\ssnum=\big\{
  & \{1, 2, 3\}, \{1, 4, 5, 7\}, \{1, 6, 10\}, \{1, 8\}, \{1, 9, 11\}, \{2, 4, 6, 8\}, \{2, 5, 11\}, \{2, 7\}, \{2, 9, 10\}\\
  &  \{3, 4\}, \{3, 5\}, \{3, 6\}, \{3, 7, 11\}, \{3, 8, 10\}, \{3, 9\}, \{4, 9\}, \{4, 10, 11\}, \{5, 6\}, \{5, 8, 9\}\\
  &  \{5, 10\}, \{6, 7, 9\}, \{6, 11\}, \{7, 8\}, \{7, 10\}, \{8, 11\}
  \big\}
\end{align*}
\item The group of automorphisms of $C_\ssnum$ is generated by:  $\sigma_1 = (1, 2)(5, 6)(7, 8)(10, 11)$.

\end{itemize}

\subsection{ }

\begin{itemize}[label=--,leftmargin=*]

\item Let $\zeta$ be a fixed root of the polynomial $Z^4 + Z^3 + Z^2 + Z + 1$. For $i\in\{1,2,3,4\}$, we define $\B_\ssnum^i$ by the following equations, where $\alpha=\zeta^i$.
\begin{equation*}
  \begin{array}{l p{0.5cm} l}
    L_{1} : z = 0 &&
    L_{2} : x = 0 \\
    L_{3} : x - z = 0 &&
    L_{4} : (\alpha)x + z = 0\\ %x + (-\alpha^3 - \alpha^2 - \alpha - 1)z = 0 \\
    L_{5} : y = 0 &&
    L_{6} : y - z = 0 \\
    L_{7} : -x + (\alpha^3 + \alpha^2 + \alpha)y + z = 0 &&
    L_{8} : (-\alpha^3 - 1)x + (\alpha)y + z = 0 \\
    L_{9} : (2\alpha^3 + \alpha + 1)x + y + (-\alpha^3 - \alpha - 1)z = 0 &&
    L_{10} : (-\alpha^3 - 1)x - y + z = 0 \\
    L_{11} : (-\alpha^3 - \alpha^2 - 1)x + (\alpha)y + z = 0 &&
  \end{array}
\end{equation*}
\item The combinatorics shared by the $\B_\ssnum^i$'s is given by:
\begin{align*}
  C_\ssnum=\big\{
  & \{1, 2, 3, 4\}, \{1, 5, 6\}, \{1, 7, 8\}, \{1, 9\}, \{1, 10, 11\}, \{2, 5\}, \{2, 6, 10\}, \{2, 7, 9\}, \{2, 8, 11\}\\
  &  \{3, 5, 7\}, \{3, 6\}, \{3, 8\}, \{3, 9, 10\}, \{3, 11\}, \{4, 5\}, \{4, 6, 7\}, \{4, 8, 9\}, \{4, 10\}, \{4, 11\}\\
  &  \{5, 8, 10\}, \{5, 9, 11\}, \{6, 8\}, \{6, 9\}, \{6, 11\}, \{7, 10\}, \{7, 11\}
  \big\}
\end{align*}
\item The group of automorphisms of $C_\ssnum$ is trivial.

\end{itemize}

\subsection{ }

\begin{itemize}[label=--,leftmargin=*]

\item Let $\zeta$ be a fixed root of the polynomial $Z^4 + 3Z^3 + 4Z^2 + 2Z + 1$. For $i\in\{1,2,3,4\}$, we define $\B_\ssnum^i$ by the following equations, where $\alpha=\zeta^i$.
\begin{equation*}
  \begin{array}{l p{0.5cm} l}
    L_{1} : z = 0 &&
    L_{2} : x = 0 \\
    L_{3} : x - z = 0 &&
    L_{4} : (-\alpha^3 - 3\alpha^2 - 3\alpha - 1)x + z = 0 \\
    L_{5} : y = 0 &&
    L_{6} : y - z = 0 \\
    L_{7} : (-\alpha - 1)x + y = 0 &&
    L_{8} : (\alpha+1)x - y - (\alpha)z = 0\\ %(\alpha^3 + 3\alpha^2 + 4\alpha + 1)x + (-\alpha^3 - 3\alpha^2 - 4\alpha - 2)y + z = 0 \\
    L_{9} : (\alpha)x - y + z = 0 &&
    L_{10} : (\alpha^2+\alpha)x - (\alpha+1)y + (\alpha^2)z = 0 \\ %(\alpha^3 + 3\alpha^2 + 4\alpha + 1)x + (\alpha^3 + 2\alpha^2 + \alpha - 2)y + z = 0 \\
    L_{11} : (\alpha^2)x + y + (\alpha)z = 0 &&
  \end{array}
\end{equation*}
\item The combinatorics shared by the $\B_\ssnum^i$'s is given by:
\begin{align*}
  C_\ssnum=\big\{
  & \{1, 2, 3, 4\}, \{1, 5, 6\}, \{1, 7, 8\}, \{1, 9, 10\}, \{1, 11\}, \{2, 5, 7\}, \{2, 6, 9\}, \{2, 8, 11\}, \{2, 10\}\\
  &  \{3, 5\}, \{3, 6, 8\}, \{3, 7, 9\}, \{3, 10\}, \{3, 11\}, \{4, 5\}, \{4, 6, 10\}, \{4, 7, 11\}, \{4, 8\}, \{4, 9\}\\
  &  \{5, 8, 10\}, \{5, 9, 11\}, \{6, 7\}, \{6, 11\}, \{7, 10\}, \{8, 9\}, \{10, 11\}
  \big\}
\end{align*}
\item The group of automorphisms of $C_\ssnum$ is generated by:  $\sigma_1 = (1, 2)(6, 7)(8, 9)(10, 11)$.

\end{itemize}

\subsection{ }

\begin{itemize}[label=--,leftmargin=*]

\item Let $\zeta$ be a fixed root of the polynomial $Z^4 - Z^3 + Z^2 - Z + 1$. For $i\in\{1,2,3,4\}$, we define $\B_\ssnum^i$ by the following equations, where $\alpha=\zeta^i$.
\begin{equation*}
  \begin{array}{l p{0.5cm} l}
    L_{1} : z = 0 &&
    L_{2} : x = 0 \\
    L_{3} : x - z = 0 &&
    L_{4} : (\alpha)x + z = 0 \\
    L_{5} : y = 0 &&
    L_{6} : y - z = 0 \\
    L_{7} : -x - y + z = 0 &&
    L_{8} : x + y + (\alpha - 1)z = 0 \\
    L_{9} : (\alpha)x + y = 0 &&
    L_{10} : (\alpha)x + y + (\alpha^2-\alpha)z = 0\\ %x + (-\alpha^3 + \alpha^2 - \alpha + 1)y + (\alpha - 1)z = 0 \\
    L_{11} : x + (-\alpha^2 + \alpha)y + (-\alpha^3 + 2\alpha^2 - \alpha)z = 0 &&
  \end{array}
\end{equation*}
\item The combinatorics shared by the $\B_\ssnum^i$'s is given by:
\begin{align*}
  C_\ssnum=\big\{
  & \{1, 2, 3, 4\}, \{1, 5, 6\}, \{1, 7, 8\}, \{1, 9, 10\}, \{1, 11\}, \{2, 5, 9\}, \{2, 6, 7\}, \{2, 8, 11\}, \{2, 10\}\\
  &  \{3, 5, 7\}, \{3, 6\}, \{3, 8, 9\}, \{3, 10\}, \{3, 11\}, \{4, 5\}, \{4, 6, 9\}, \{4, 7, 11\}, \{4, 8\}, \{4, 10\}\\
  &  \{5, 8, 10\}, \{5, 11\}, \{6, 8\}, \{6, 10, 11\}, \{7, 9\}, \{7, 10\}, \{9, 11\}
  \big\}
\end{align*}
\item The group of automorphisms of $C_\ssnum$ is trivial.

\end{itemize}

\subsection{ }

\begin{itemize}[label=--,leftmargin=*]

\item Let $\zeta$ be a fixed root of the polynomial $Z^4 + Z^3 + Z^2 + Z + 1$. For $i\in\{1,2,3,4\}$, we define $\B_\ssnum^i$ by the following equations, where $\alpha=\zeta^i$.
\begin{equation*}
  \begin{array}{l p{0.5cm} l}
    L_{1} : z = 0 &&
    L_{2} : x = 0 \\
    L_{3} : x - z = 0 &&
    L_{4} : (\alpha^3)x + z = 0 \\
    L_{5} : y = 0 &&
    L_{6} : y - z = 0 \\
    L_{7} : (\alpha^3)x + y = 0 &&
    L_{8} : x - (\alpha)y + (\alpha)z = 0\\ %(-\alpha^3 - \alpha^2 - \alpha - 1)x - y + z = 0 \\
    L_{9} : -x + (\alpha)y + z = 0 &&
    L_{10} : x + (\alpha^2)y + (\alpha)z = 0\\ %(-\alpha^3 - \alpha^2 - \alpha - 1)x + (\alpha)y + z = 0 \\
    L_{11} : (\alpha^3 + \alpha^2)x + y + (-\alpha^3 - \alpha^2 - 1)z = 0 &&
  \end{array}
\end{equation*}
\item The combinatorics shared by the $\B_\ssnum^i$'s is given by:
\begin{align*}
  C_\ssnum=\big\{
  & \{1, 2, 3, 4\}, \{1, 5, 6\}, \{1, 7, 10\}, \{1, 8, 9\}, \{1, 11\}, \{2, 5, 7\}, \{2, 6, 8\}, \{2, 9, 10\}, \{2, 11\}\\
  &  \{3, 5, 9\}, \{3, 6, 11\}, \{3, 7\}, \{3, 8\}, \{3, 10\}, \{4, 5\}, \{4, 6, 7\}, \{4, 8, 11\}, \{4, 9\}, \{4, 10\}\\
  &  \{5, 8, 10\}, \{5, 11\}, \{6, 9\}, \{6, 10\}, \{7, 8\}, \{7, 9, 11\}, \{10, 11\}
  \big\}
\end{align*}
\item The group of automorphisms of $C_\ssnum$ is trivial.

\end{itemize}

\subsection{ }

\begin{itemize}[label=--,leftmargin=*]

\item Let $\zeta$ be a fixed root of the polynomial $Z^4 + Z^3 + Z^2 + Z + 1$. For $i\in\{1,2,3,4\}$, we define $\B_\ssnum^i$ by the following equations, where $\alpha=\zeta^i$.
\begin{equation*}
  \begin{array}{l p{0.5cm} l}
    L_{1} : z = 0 &&
    L_{2} : x = 0 \\
    L_{3} : y = 0 &&
    L_{4} : x - z = 0 \\
    L_{5} : y - z = 0 &&
    L_{6} : x - (\alpha)y + (\alpha)z = 0\\ %(-\alpha^3 - \alpha^2 - \alpha - 1)x - y + z = 0 \\
    L_{7} : -x + (\alpha)y + z = 0 &&
    L_{8} : x + (\alpha^2)y + (\alpha)z = 0 \\ %(-\alpha^3 - \alpha^2 - \alpha - 1)x + (\alpha)y + z = 0 \\
    L_{9} : (\alpha^2 + \alpha)x + (-\alpha^2 - \alpha - 1)y + z = 0 &&
    L_{10} : (2\alpha^3 + \alpha^2 + \alpha + 1)x + y + (-\alpha^3 - 1)z = 0 \\
    L_{11} : x + (\alpha^2-\alpha)y - (\alpha^2-1)z = 0&& %(4\alpha^3 + 3\alpha^2 + 2\alpha + 1)x + 5y + (-3\alpha^3 - \alpha^2 + \alpha - 2)z = 0 &&
  \end{array}
\end{equation*}
\item The combinatorics shared by the $\B_\ssnum^i$'s is given by:
\begin{align*}
  C_\ssnum=\big\{
  & \{1, 2, 4\}, \{1, 3, 5\}, \{1, 6, 7\}, \{1, 8, 9\}, \{1, 10\}, \{1, 11\}, \{2, 3\}, \{2, 5, 6\}, \{2, 7, 8\}, \{2, 9, 10\}\\
  &  \{2, 11\}, \{3, 4, 7\}, \{3, 6, 8\}, \{3, 9, 11\}, \{3, 10\}, \{4, 5, 9\}, \{4, 6, 10\}, \{4, 8\}, \{4, 11\}\\
  &  \{5, 7, 11\}, \{5, 8\}, \{5, 10\}, \{6, 9\}, \{6, 11\}, \{7, 9\}, \{7, 10\}, \{8, 10, 11\}
  \big\}
\end{align*}
\item The group of automorphisms of $C_\ssnum$ is generated by:  $\sigma_1 = (2, 3)(4, 5)(6, 7)(10, 11)$.

\end{itemize}

\subsection{ }

\begin{itemize}[label=--,leftmargin=*]

\item Let $\zeta$ be a fixed root of the polynomial $Z^4 + 3Z^3 + 4Z^2 + 2Z + 1$. For $i\in\{1,2,3,4\}$, we define $\B_\ssnum^i$ by the following equations, where $\alpha=\zeta^i$.
\begin{equation*}
  \begin{array}{l p{0.5cm} l}
    L_{1} : z = 0 &&
    L_{2} : x = 0 \\
    L_{3} : y = 0 &&
    L_{4} : (-\alpha - 1)x + (\alpha)y + z = 0 \\
    L_{5} : x - z = 0 &&
    L_{6} : (-\alpha - 1)x + (\alpha^3 + 2\alpha^2 + 2\alpha)y + z = 0 \\
    L_{7} : y - z = 0 &&
    L_{8} : (\alpha+1)x - (\alpha)y + (\alpha^2+\alpha)z = 0 \\ %(\alpha^3 + 3\alpha^2 + 4\alpha + 1)x + y + (-\alpha - 1)z = 0 \\
    L_{9} : (\alpha^2 + 2\alpha + 1)x + y = 0 &&
    L_{10} : -x + (\alpha)y + z = 0 \\
    L_{11} : x - (\alpha)y + (\alpha)z = 0 && %(-\alpha^3 - 3\alpha^2 - 4\alpha - 2)x - y + z = 0 &&
  \end{array}
\end{equation*}
\item The combinatorics shared by the $\B_\ssnum^i$'s is given by:
\begin{align*}
  C_\ssnum=\big\{
  & \{1, 2, 5\}, \{1, 3, 7\}, \{1, 4, 8\}, \{1, 6, 9\}, \{1, 10, 11\}, \{2, 3, 9\}, \{2, 4, 10\}, \{2, 6, 8\}, \{2, 7, 11\}\\
  &  \{3, 4, 6\}, \{3, 5, 10\}, \{3, 8, 11\}, \{4, 5, 7\}, \{4, 9, 11\}, \{5, 6\}, \{5, 8\}, \{5, 9\}, \{5, 11\}, \{6, 7\}\\
  &  \{6, 10\}, \{6, 11\}, \{7, 8\}, \{7, 9\}, \{7, 10\}, \{8, 9\}, \{8, 10\}, \{9, 10\}
  \big\}
\end{align*}
\item The group of automorphisms of $C_\ssnum$ is generated by:  $\sigma_1 = (1, 4, 2, 3)(5, 6)(7, 8, 10, 9)$.

\end{itemize}

\end{spacing}

%%%%%%%%%%%%%%%%%%%%%%%%%%%%%%
%        Bibliography        %
%%%%%%%%%%%%%%%%%%%%%%%%%%%%%%

\newpage
\bibliographystyle{plain}
\bibliography{biblio}

\end{document}